\theoremstyle{plain} 
\newtheorem{theorem}             {Theorem}  [section]
\newtheorem{lemma}      [theorem]{Lemma}
\newtheorem{corollary}  [theorem]{Corollary}
\newtheorem{proposition}[theorem]{Proposition}
\newtheorem{conjecture} [theorem]{Conjecture}
\theoremstyle{definition}
\newtheorem{definition} [theorem]{Definition}
\theoremstyle{remark}
\newtheorem{remark} [theorem]             {Remark}
\def\sgn{\operatorname{sgn}}
\def\Stab{\operatorname{Stab}}
\def\PSL{\operatorname{PSL}}
\def\res{\operatorname{res}}
\def\vol{\operatorname{vol}}
\def\SO{\operatorname{SO}}
\def\PGL{\operatorname{PGL}}
\def\GL{\operatorname{GL}}
\def\ad{\operatorname{ad}}
\def\SL{\operatorname{SL}}
\renewcommand{\Re}{\mathrm{Re}}
\renewcommand{\Im}{\mathrm{Im}}
\def\eps{\varepsilon}
\begin{document}

\title{Equidistribution of cusp forms in the level aspect}

\author{Paul D. Nelson}
\begin{abstract}
  Let $f$ traverse a sequence of classical holomorphic newforms of fixed
  weight and increasing squarefree level $q \rightarrow \infty$.
  We prove that
  the pushforward of the mass of $f$ to the modular curve of
  level $1$ equidistributes with respect to the Poincar\'{e}
  measure.

  Our result answers affirmatively the squarefree level case of
  a conjecture spelled out in 2002 by Kowalski, Michel, and
  VanderKam
  \cite{KMV02}
  in the spirit of a conjecture of Rudnick and Sarnak \cite{MR1266075}
  made in 1994.

  Our proof follows the strategy of Holowinsky and Soundararajan
  \cite{MR2680499} who showed in 2008 that
  newforms of level $1$ and large weight have equidistributed
  mass.  The new ingredients required to treat forms of fixed
  weight and large level are an adaptation of Holowinsky's
  reduction of the problem to one of bounding shifted sums of
  Fourier coefficients, a refinement of his bounds for shifted
  sums, an evaluation of the $p$-adic integral needed to extend
  Watson's formula to the case of three newforms where the level
  of one divides but need not equal the common squarefree level
  of the other two, and some additional technical work in the
  problematic case that the level has many small prime factors.
\end{abstract}
\maketitle

\tableofcontents

\newpage
\section{Introduction}
\subsection{Statement of result}
A basic problem in modern number theory and the analytic
theory of modular forms is to understand the limiting
behavior of modular forms in families.
Let $f : \mathbb{H} \rightarrow \mathbb{C}$ be a classical
holomorphic
newform of weight $k$ and level $q$.
The \emph{mass} of $f$ is the finite measure
$d \nu_f = |f(z)|^2 y^{k-2} \, d x
\, d y$ ($z = x + i y$) on the modular curve $Y_0(q) =
\Gamma_0(q) \backslash \mathbb{H}$.
In a recent breakthrough, Holowinsky and Soundararajan
\cite{MR2680499}
proved that newforms of large weight $k$ and fixed level $q = 1$
have equidistributed mass,
answering affirmatively a natural variant\footnote{as spelled out by Luo
  and Sarnak \cite{luo-sarnak-mass}; we refer to Sarnak
  \cite{MR1321639,sarnak-progress-que} and the references in
  \cite{MR2680499} for further discussion.}
of the \emph{quantum unique ergodicity} conjecture of Rudnick
and Sarnak \cite{MR1266075}.
\begin{theorem}[Mass equidistribution for $\SL(2,\mathbb{Z})$
  in the weight aspect]\label{thm:hque-weight}
  Let $f$ traverse a sequence of newforms of increasing weight
  $k \rightarrow \infty$ and fixed level $q = 1$.  Then the mass $\nu_f$
  equidistributes\footnote{We say that a sequence of finite Radon
    measures $\mu_j$ on a locally compact Hausdorff space $X$
    \emph{equidistributes} with respect to some fixed finite
    Radon measure $\mu$ if for each function $\phi \in C_c(X)$
    we have $\mu_j(\phi) / \mu_j(1) \rightarrow \mu(\phi) /
    \mu(1)$ as $j \rightarrow \infty$, here and always
    identifying a measure $\mu$ with the corresponding linear
    functional $\phi \mapsto \mu(\phi) := \int_X \phi \, d \mu$
    on the space $C_c(X)$
    and writing $1$ for the constant function.  } with respect to
  the Poincar\'{e} measure $d \mu = y^{-2} \, d x \, d y$ on
  the modular curve $Y_0(q)$.
\end{theorem}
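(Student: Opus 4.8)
The plan is to follow the strategy of Holowinsky and Soundararajan, combining two bounds for the relevant Weyl sums, neither adequate in isolation, that are effective on complementary ranges of $f$. First I would use the spectral decomposition of $L^2(Y_0(1))$ together with Weyl's equidistribution criterion to reduce the claim to the statement that the normalized mass $\mu_f := \nu_f / \nu_f(1)$ tests correctly against a spanning family of functions: constants, incomplete Eisenstein series, and $L^2$-normalized Hecke--Maass cusp forms. Against constants the identity $\mu_f(1) = 1$ is trivial. Against an incomplete Eisenstein series one unfolds and applies Mellin inversion; the period is governed by the Rankin--Selberg $L$-function $L(s, f \times f) = \zeta(s) L(s, \mathrm{sym}^2 f) / \zeta(2 s)$, whose pole at $s = 1$ reproduces the Poincar\'{e}-measure main term (matching the pole of $\nu_f(1)$, which involves the same residue, proportional to $L(1, \mathrm{sym}^2 f)$), while the remaining contour integral near $\Re s = 1$ is controlled by classical bounds for $\zeta$ and $L(\cdot, \mathrm{sym}^2 f)$ at the edge of the critical strip, as in Luo--Sarnak; this part is not the main point. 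Everything thus reduces to proving, for each fixed Hecke--Maass cusp form $g$, that $\mu_f(g) \to 0$ as $k \to \infty$.

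For the first bound I would invoke Watson's triple-product formula together with the accompanying archimedean computation to express $|\mu_f(g)|^2$ in terms of the ratio
\[
  \frac{L(1/2,\, f \times f \times g)}{L(1, \mathrm{sym}^2 f)^2 \, L(1, \mathrm{sym}^2 g)} \;=\; \frac{L(1/2, g)\, L(1/2,\, \mathrm{sym}^2 f \times g)}{L(1, \mathrm{sym}^2 f)^2 \, L(1, \mathrm{sym}^2 g)},
\]
so that the $g$ with $L(1/2, g) = 0$ are trivial. Feeding in Soundararajan's weak subconvexity bound for the degree-six $L$-function $L(1/2, \mathrm{sym}^2 f \times g)$ --- which improves the convexity estimate by a power of $\log k$ --- one arrives at
\[
  \mu_f(g) \;\ll_{g,\epsilon}\; \frac{(\log k)^{-\delta_0 + \epsilon}}{L(1, \mathrm{sym}^2 f)}
\]
for some absolute $\delta_0 > 0$. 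In particular $\mu_f(g) \to 0$ whenever $L(1, \mathrm{sym}^2 f) \geq (\log k)^{-\theta}$ for a fixed $\theta < \delta_0$.

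For the second bound I would follow Holowinsky: reduce the cuspidal Weyl sum to shifted convolution sums of the Hecke eigenvalues $\lambda_f(n)$, and estimate those by Cauchy--Schwarz together with sieve bounds (Shiu, Henriot) for sums of nonnegative multiplicative functions, obtaining
\[
  \mu_f(g) \;\ll_{g,\epsilon}\; (\log k)^{\epsilon} \exp\!\Big( -2 \sum_{p \leq k} \frac{1 - |\lambda_f(p)|}{p} \Big),
\]
which is $o(1)$ unless $\sum_{p \leq k} (1 - |\lambda_f(p)|)/p = o(\log \log k)$. To reconcile the two ranges, I would use the pointwise bound $2|\lambda_f(p)| \leq |\lambda_f(p)|^2 + 1$ (equivalently $(|\lambda_f(p)| - 1)^2 \geq 0$) together with the identity $\sum_{p \leq k} (|\lambda_f(p)|^2 - 1)/p = \log L(1, \mathrm{sym}^2 f) + O(1)$, which follows from the Hecke relation $\lambda_f(p)^2 = \lambda_f(p^2) + 1$: summing the pointwise bound shows that $L(1, \mathrm{sym}^2 f) \leq (\log k)^{-\theta}$ forces $\sum_{p \leq k}(1 - |\lambda_f(p)|)/p \geq \tfrac{\theta}{2} \log \log k + O(1)$, whence the second bound yields $\mu_f(g) \ll (\log k)^{-\theta + \epsilon}$. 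Since for any fixed $0 < \theta < \delta_0$ one of the two bounds is decisive, $\mu_f(g) \to 0$, which proves the theorem.

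I expect the two serious inputs to be Soundararajan's weak subconvexity bound --- obtained via an unconditional upper bound for $|L(1/2, \pi)|$ in terms of a short Dirichlet polynomial over prime powers, exploiting the nonnegativity of the Dirichlet coefficients of the Rankin--Selberg square $L(s, \pi \times \tilde\pi)$ --- and Holowinsky's sieve estimate for the shifted convolution sums with the sharp dependence on the sizes of the $\lambda_f(p)$; by comparison, the reconciliation step and the treatment of the Eisenstein spectrum are routine. My guess is that the weak subconvexity estimate is the principal obstacle, with Holowinsky's shifted-sum bound a close second.
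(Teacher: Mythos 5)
Your proposal is correct and follows essentially the same route as the source this paper cites for Theorem \ref{thm:hque-weight} (Holowinsky--Soundararajan \cite{MR2680499}), whose synthesis the paper itself only summarizes in \S 1.2 and then adapts to the level aspect in \S\ref{sec:proof-theor-refthm:1}: Watson's formula plus weak subconvexity when $L(1,\ad f)$ is not too small, Holowinsky's sieve bound for shifted convolution sums otherwise, reconciled via $2|\lambda_f(p)| \leq 1 + \lambda_f(p)^2 = 2 + \lambda_f(p^2)$. The only cosmetic quibble is that Holowinsky's shifted-sum estimate rests on a direct sieve decomposition by smooth parts rather than Cauchy--Schwarz, but this does not affect the shape or validity of the argument.
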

Kowalski, Michel, and VanderKam \cite[Conj 1.5]{KMV02} formulated
an analogue of the Rudnick-Sarnak conjecture in which the roles
of the parameters $k$ and $q$ are reversed: they conjectured
that the masses of newforms of fixed weight and large level $q$
are equidistributed amongst the fibers of the canonical
projection $\pi_q : Y_0(q) \rightarrow Y_0(1)$
in the following sense.
\begin{conjecture}[Mass equidistribution for $\SL(2,\mathbb{Z})$ in
  the level aspect]\label{conj:1}
  Let $f$ traverse a sequence of newforms of fixed weight and
  increasing level $q \rightarrow \infty$.  Then the pushforward $\mu_f := \pi_{q*}(\nu_f)$
  of the mass of $f$ to $Y_0(1)$ equidistributes with respect to
  $\mu$.
\end{conjecture}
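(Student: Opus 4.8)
The plan is to transplant the Holowinsky--Soundararajan dichotomy to the level aspect. Since $\mu_f=\pi_{q*}(\nu_f)$, for any $\phi\in C_c(Y_0(1))$ one has $\mu_f(\phi)=\int_{Y_0(q)}(\phi\circ\pi_q)\,d\nu_f$, and in particular $\mu_f(1)=\nu_f(Y_0(q))=\|f\|^2\asymp_k q\,L(1,\mathrm{sym}^2 f)=q^{1+o(1)}$. Approximating $\phi$ uniformly by smooth functions and using the spectral decomposition of $L^2(Y_0(1))$ together with the density of incomplete Eisenstein and incomplete Poincar\'e series, it suffices to prove $\mu_f(\phi)=o(\mu_f(1))$ when $\phi$ is (i) a fixed Hecke--Maass cusp form, or (ii) an incomplete Eisenstein series, with the rate depending at most polynomially on the spectral parameter and a Sobolev norm of $\phi$ so that the spectral expansion of a general test function may be truncated before letting $q\to\infty$; the constant component of $\phi$ is matched automatically.

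For a fixed Hecke--Maass cusp form $\phi$ I would establish two independent bounds and keep the stronger. The first follows Watson: $\phi\circ\pi_q$ is an oldform on $Y_0(q)$, and extending Watson's triple-product formula to a triple $(f,\bar f,\phi)$ in which the level of $\phi$ merely divides the common squarefree level $q$ of $f,\bar f$ gives
\[
  \frac{|\mu_f(\phi)|^2}{\mu_f(1)^2}\;\asymp_{k,\phi}\;\frac{\Lambda(\tfrac12,\mathrm{sym}^2 f\times\phi)\,\Lambda(\tfrac12,\phi)}{\Lambda(1,\mathrm{sym}^2 f)^2}\,\prod_{p\mid q}I_p ,
\]
where each $I_p$ is a local $p$-adic integral recording the discrepancy between the level of $f$ and that of $\phi$; evaluating $I_p$ explicitly --- it should be of size $p^{-1+o(1)}$, so $\prod_{p\mid q}I_p=q^{-1+o(1)}$ --- is the first new ingredient. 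The $\GL_3\times\GL_2$ Rankin--Selberg $L$-function $\Lambda(\tfrac12,\mathrm{sym}^2 f\times\phi)$ has analytic conductor $q^{4+o(1)}$, so the convexity bound yields only $\Lambda(\tfrac12,\mathrm{sym}^2 f\times\phi)\ll q^{1+o(1)}$, exactly failing to produce decay; Soundararajan's weak subconvexity estimate, which saves a fixed power of $\log q$ over convexity, then gives $|\mu_f(\phi)|^2/\mu_f(1)^2\ll_{k,\phi} L(1,\mathrm{sym}^2 f)^{-2}(\log q)^{-\eta}$ for some fixed $\eta>0$, which is $o(1)$ unless $L(1,\mathrm{sym}^2 f)$ is small.

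The second bound follows Holowinsky: I would test $\mu_f$ against an incomplete Poincar\'e series on $Y_0(1)$, insert the Fourier expansion of $y^{k}|f|^2$, unfold at the cusp, and bound the resulting shifted convolution sums $\sum_n\lambda_f(n)\overline{\lambda_f(n+\ell)}$ by an elementary sieve, summed against the Fourier data of $\phi$. What is needed is a refinement of Holowinsky's estimate making it uniform and efficient in the level aspect --- with tight control of the dependence on the shift $\ell$ and on the prime factorization of $q$ --- of the shape
\[
  \frac{\mu_f(\phi)}{\mu_f(1)}\;\ll_{k,\phi}\;\frac{(\log q)^{o(1)}}{L(1,\mathrm{sym}^2 f)\,(\log q)^{2}}\prod_{p\le q}\Bigl(1+\frac{2|\lambda_f(p)|}{p}\Bigr)\;=\;(\log q)^{o(1)}\exp\Bigl(-\sum_{p\le q}\frac{(|\lambda_f(p)|-1)^2}{p}\Bigr),
\]
the last identity using $\lambda_f(p^2)=\lambda_f(p)^2-1$, whence $\log L(1,\mathrm{sym}^2 f)=\sum_{p\le q}(\lambda_f(p)^2-1)/p+O(1)$. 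Now set $S:=\sum_{p\le q}(|\lambda_f(p)|-1)^2/p\ge0$ and fix a small $\epsilon>0$. If $S\ge\epsilon\log\log q$ the Holowinsky bound is $\ll_{k,\phi}(\log q)^{-\epsilon+o(1)}$; if $S<\epsilon\log\log q$, then writing $\lambda_f(p)^2-1=(|\lambda_f(p)|-1)^2+2(|\lambda_f(p)|-1)$ and applying Cauchy--Schwarz to $\sum_{p\le q}(|\lambda_f(p)|-1)/p$ gives $L(1,\mathrm{sym}^2 f)\gg(\log q)^{-2\sqrt{\epsilon}}$, so for $\epsilon$ small in terms of $\eta$ the Watson bound is $\ll_{k,\phi}(\log q)^{-\delta}$ for some $\delta>0$; either way $\mu_f(\phi)=o(\mu_f(1))$ in the cuspidal case. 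For $\phi$ an incomplete Eisenstein series one unfolds and reduces to the size of $\zeta(s)L(s,\mathrm{sym}^2 f)$ on and just to the left of $\Re s=1$, controlled by standard zero-free regions and the Rankin--Selberg asymptotic $\sum_{n\le x}|\lambda_f(n)|^2\asymp x\,L(1,\mathrm{sym}^2 f)$; this again gives $o(\mu_f(1))$, and summing the truncated spectral expansion and letting $q\to\infty$ completes the proof.

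I expect the crux to be the refinement of the shifted-convolution-sum estimate, so that it loses only $(\log q)^{o(1)}$ uniformly in the shift $\ell$ and in the level, together with the extra technical work needed in the recalcitrant range where $q$ has abnormally many small prime factors --- there the sieve main terms and the local factors $I_p$ both degrade and must be handled by hand, and the density/approximation step behind the reduction also requires more care. By comparison, the evaluation of the $p$-adic integral $I_p$ extending Watson's formula and the analysis of the Eisenstein contribution, while not automatic, should be essentially computational.
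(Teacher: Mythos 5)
Your outline reproduces, nearly step for step, the strategy this paper actually carries out: the Weyl--criterion reduction to fixed Maass eigencuspforms and incomplete Eisenstein series, the Watson--Ichino identity with local integrals $\tilde I_p=1/p$ at primes $p\mid q$ fed into Soundararajan's weak subconvexity, a refinement of Holowinsky's shifted-convolution bound that is uniform in the shift (removing the $\tau(q\ell)$ loss), special care when $q$ has many small prime factors, and the closing dichotomy on $S=\sum_{p\le x}(|\lambda_f(p)|-1)^2/p$ versus the size of $L(1,\ad f)$. You have also correctly isolated the two genuinely new ingredients (the uniform shifted-sum estimate and the $p$-adic integral).

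The genuine gap is that the statement you were asked to prove is the conjecture for \emph{arbitrary} level, and your argument --- exactly like the paper's --- only works for squarefree $q$. The restriction enters in at least three places. First, your evaluation $I_p\asymp p^{-1}$ is available only when $\pi_{f,p}$ is an unramified quadratic twist of Steinberg, i.e.\ when $p$ divides $q$ exactly once; if $p^2\mid q$ the local component can be supercuspidal and the matrix-coefficient integral is not computed (the paper explicitly defers this to future work). Second, the unfolding over the cusps of $\Gamma_0(q)$ uses that every scaling matrix is an Atkin--Lehner involution fixing $|f|^2$, which fails for non-squarefree $q$, where the cusps fall into classes of $\phi(\gcd(d,q/d))$ cusps of width $d/\gcd(d,q/d)$. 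Third, the divisor-sum inequality $\sum_{d\mid q}dk/\log(dk)^{2-\eps}\ll qk\log\log(e^e q)/\log(qk)^{2-\eps}$, which controls the accumulation of the $2^{\omega(q)}$ shifted sums coming from the cusps, is proved only for squarefree $q$. So what you have sketched is a proof of the squarefree case (the paper's Theorem \ref{thm:1}), not of Conjecture \ref{conj:1} itself, which remains open in general. Two smaller quantitative points: the shifted-sum machinery yields a bound of the shape $\log(qk)^{\eps}M_f(qk)^{1/2}$ rather than $M_f(qk)$ itself (this only shifts the exponents in the final dichotomy), and in the incomplete Eisenstein case the error term is governed by weak subconvex bounds for $L(\ad f,\tfrac12+it)$ on the critical line, not merely by zero-free regions near $\Re(s)=1$.
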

Kowalski, Michel, and VanderKam
remark that
Conjecture \ref{conj:1} follows in the special case of dihedral
forms from their subconvex bounds for Rankin-Selberg
$L$-functions modulo an unestablished extension of Watson's
formula \cite{watson-2008}, which is now known by Theorem
\ref{thm:watson-ext} of this paper.  Recently Koyama
\cite{MR2511663}, following the method of Luo and Sarnak
\cite{MR1361757}, proved the analogue of Conjecture \ref{conj:1}
for unitary Eisenstein series of increasing prime level by
reducing the problem to known subconvex bounds for automorphic
$L$-functions of degree two.

Our aim in this paper is to establish
the squarefree level case of
Conjecture \ref{conj:1}.
Our result is the first of its
kind for nondihedral cusp forms.
\begin{theorem}[Mass equidistribution for
  $\SL(2,\mathbb{Z})$ in the squarefree level
  aspect]\label{thm:1}
  Let $f$ traverse a sequence
  of newforms of fixed weight and increasing
  squarefree level $q \rightarrow \infty$.  Then $\mu_f$
  equidistributes with respect to $\mu$.
\end{theorem}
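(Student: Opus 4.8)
The plan is to follow the architecture of Holowinsky and Soundararajan's proof of Theorem~\ref{thm:hque-weight}, replacing each of their weight‑aspect inputs by a level‑aspect analogue. First I would reduce Theorem~\ref{thm:1} to estimates for periods against a spanning family of test functions. By the spectral decomposition of $L^2(Y_0(1))$ together with a standard approximation argument, it suffices to prove, for $f$ of squarefree level $q\to\infty$, that (i) for each fixed Hecke--Maass cusp form $g$ on $Y_0(1)$ one has $\mu_f(g)/\mu_f(1)\to 0$, and (ii) for each incomplete Eisenstein series $E_\psi$ (built from $\psi\in C_c^\infty(0,\infty)$) one has $\mu_f(E_\psi)/\mu_f(1)\to \mu(E_\psi)/\mu(1)$. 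Here $\mu_f(1)=\nu_f(Y_0(q))=\|f\|^{2}$, and a Rankin--Selberg computation gives $\|f\|^{2}\asymp_k q\,L(1,\operatorname{sym}^2 f)$; since $L(1,\operatorname{sym}^2 f)\gg 1/\log q$ by Hoffstein--Lockhart, $\mu_f(1)\gg q^{1-\epsilon}$, so dividing by it costs essentially nothing. Testing against the constant function together with (ii) also rules out escape of mass into the cusp of $Y_0(1)$, which legitimizes approximating a general $\phi\in C_c(Y_0(1))$ by the span of $1$, Hecke--Maass cusp forms, and incomplete Eisenstein series.

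Step (ii) should be the easier of the two. By Mellin inversion it follows from a bound for $\mu_f(E(\cdot,\tfrac12+it))$ with polynomial control in $t$, together with the residue of $E(\cdot,s)$ at $s=1$ (which supplies exactly the expected equidistribution main term $\mu(E_\psi)\,\mu_f(1)/\mu(1)$). Rankin--Selberg unfolding expresses $\mu_f(E(\cdot,\tfrac12+it))$, up to fixed archimedean factors and the normalization $\|f\|^{2}$, through $\zeta(\tfrac12+it)L(\tfrac12+it,\operatorname{sym}^2 f)/\zeta(1+2it)$. Since $\operatorname{sym}^2 f$ has conductor $q^{2}$, even the convexity bound $L(\tfrac12+it,\operatorname{sym}^2 f)\ll_{k,\epsilon}q^{1/2+\epsilon}(1+|t|)^{O(1)}$ --- a fortiori Soundararajan's weak subconvexity bound --- beats $\mu_f(1)\gg q^{1-\epsilon}$ with room to spare, the $t$‑integral converging by rapid decay of the test function's transform. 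I expect this step to go through by a routine adaptation of the corresponding step in the weight aspect.

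The substance is (i), which I would prove by the Holowinsky--Soundararajan dichotomy keyed to the size of $L:=L(1,\operatorname{sym}^2 f)$. On the $L$‑function side I would invoke the extension of Watson's formula supplied by Theorem~\ref{thm:watson-ext} --- whose proof requires evaluating the local $p$‑adic trilinear integral at the primes $p\mid q$, at which $f$ is an unramified twist of Steinberg while $g$ is spherical --- to obtain
\[
\Bigl(\frac{\mu_f(g)}{\mu_f(1)}\Bigr)^{2}\asymp_{g,k}\frac{L(\tfrac12,\operatorname{sym}^2 f\times g)}{L^{2}}\prod_{p\mid q}(\text{explicit local factor at }p),
\]
and then apply weak subconvexity to the degree‑six $L$‑function $L(s,\operatorname{sym}^2 f\times g)$, of conductor $q^{4}$, together with an evaluation of the product over $p\mid q$; this yields a bound that decays by a power of $\log q$ unless $L$ is abnormally small. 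On the sieve side I would adapt Holowinsky's reduction: expanding $g$ in its Fourier series over the cuspidal part of $Y_0(q)$ and carrying out the unipotent integration reduces $\mu_f(g)$, up to an admissible contribution from the bulk, to a sum of shifted convolution sums $\sum_{n}\lambda_f(n)\lambda_f(n+\ell)W(n)$ with $n$ of size $\asymp q$ and $\ell$ effectively short; a refinement of Holowinsky's sieve bound for such sums then gives a bound that decays unless $L$ is abnormally large. Taking the geometric mean of the two estimates, exactly as in Holowinsky--Soundararajan, forces $\mu_f(g)/\mu_f(1)\to 0$ regardless of the size of $L$.

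The hard part, and the one that is genuinely new relative to the weight aspect, is the case in which $q$ has many small prime factors (e.g.\ $q=\prod_{p\le y}p$, so that $\omega(q)\asymp y/\log y$ with $y\asymp\log q$). In the sieve bound the Euler products run over all $p\le q$, including the $p\mid q$ --- where Atkin--Lehner theory forces $\lambda_f(p)^{2}=1/p$ --- and shifts $\ell$ sharing many small prime factors with $q$ contribute anomalously; simultaneously the product $\prod_{p\mid q}$ of local factors from Theorem~\ref{thm:watson-ext} must be kept under control. When $q$ has few or only large prime factors this is automatic, but in the opposite regime the naive estimates lose too much and the geometric‑mean argument does not quite close. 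I would handle this by peeling off the small‑prime part of $q$, treating those primes individually in both the shifted‑convolution bound and the local factors, and verifying that the saving obtained from the complementary range of primes still suffices. I expect this --- together with establishing the refined shifted‑convolution estimate with adequate uniformity in the shift $\ell$ and in the level $q$ --- to be the principal obstacle; by contrast the reduction steps and the Eisenstein analysis should be comparatively routine.
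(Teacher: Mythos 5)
Your overall architecture matches the paper's: Weyl's criterion with Maass eigencuspforms and incomplete Eisenstein series, the Holowinsky--Soundararajan dichotomy keyed to $L(1,\ad f)$, the extension of Watson's formula via Ichino's local integral at $p\mid q$ (special $\times$ special $\times$ spherical), and a refined sieve bound for shifted sums. But two steps you declare routine are not, and one is miscalculated. In the Eisenstein step, unfolding the \emph{level-one} incomplete Eisenstein series against the $\Gamma_0(q)$-invariant measure $\nu_f$ runs over the cusps of $\Gamma_0(q)$, one for each divisor $d\mid q$ with width $d$, and on the line $\Re(s)=\tfrac12$ this contributes a factor $\sum_{d\mid q}d^{s}\approx q^{1/2}$ to the period (the same sum at $s=1$ is what produces the factor $q$ in $\mu_f(1)\asymp q\,L(1,\ad f)$). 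The correct comparison is therefore $q^{1/2}L(\tfrac12+it,\ad f)$ against $q\,L(1,\ad f)$: convexity gives only $D_f(\phi)\ll q^{\eps}/L(1,\ad f)\ll q^{\eps}\log q$ --- no saving at all, not ``room to spare.'' One needs Soundararajan's weak subconvexity to get $D_f(\phi)\ll\log(qk)^{-1+\eps}/L(1,\ad f)$, and even that is not $o(1)$ on its own; the Eisenstein case must also be run through the dichotomy against the sieve bound, exactly as the cuspidal case is.

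Second, the reduction to shifted sums does not yield sums with ``$n\asymp q$ and $\ell$ effectively short.'' It yields $\sum_{d\mid q}\sum_{n\ll dk}\lambda_f(n)\lambda_f(n+d\ell)$: one sum per cusp, with shift $d\ell$ as large as the summation length $dk$. Two ingredients are then indispensable which your ``peel off the small primes'' plan does not supply: (i) a shifted-sum bound \emph{fully uniform in the shift} --- Holowinsky's Theorem 2 carries a factor $\tau(q\ell)$, which exceeds every power of $\log q$ when $q$ has many small prime factors, and removing it requires reworking the sieve's treatment of the common small-prime divisor $c$ of $n$ and $n+d\ell$ (the paper's Theorem~\ref{thm:holow-sums-2}); and (ii) control of the sum over all $2^{\omega(q)}$ divisors, since each term saves only a factor $\log(dk)^{2-\eps}$ over the trivial bound $dk$, so one needs $\sum_{d\mid q}dk/\log(dk)^{2-\eps}\ll qk\log\log(e^{e}q)/\log(qk)^{2-\eps}$, proved in the paper by a convexity argument plus the prime number theorem (Lemma~\ref{lem:stupid-sums}). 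Without (i) and (ii) the geometric-mean argument fails precisely in the many-small-prime-factors regime you single out as the hard case.
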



\begin{remark}\label{rmk:1}
  Our extension (Theorem \ref{thm:watson-ext}) of Watson's
  formula \cite{watson-2008} shows that Theorem \ref{thm:1}
  would follow from subconvex bounds
  $L(f \times f \times \phi,1/2) \ll_{\phi} q^{1-\delta}$ ($\delta >
  0$)
  for the central
  $L$-values  of the triple
  product $L$-functions attached to $f$ as above and each
  Maass cusp form or unitary Eisenstein series $\phi$ on
  $Y_0(1)$.  Such bounds are known to follow from the
  generalized Lindel\"{o}f hypothesis, which itself follows
  from the generalized Riemann hypothesis, so one can view
  Theorem \ref{thm:1} as an unconditionally proven consequence of
  a central unresolved conjecture.
\end{remark}
\begin{remark}
  One cannot relax entirely the restriction of Theorem
  \ref{thm:1} to newforms, since
  for instance a cusp form of level $1$ may
  be regarded as an oldform of arbitrary level $q > 1$.
\end{remark}
\begin{remark}
  Rudnick \cite{MR2181743} showed that Theorem
  \ref{thm:hque-weight} implies that the zeros of newforms of
  level $1$ and weight $k \rightarrow \infty$ equidistribute on
  $Y_0(1)$.
  At the 2010 Arizona Winter School,
  Soundararajan asked whether
  there is an analogue of Rudnick's result for newforms of large
  level.
  We do not know whether such an analogue exists and
  highlight here one of the difficulties in adapting
  Rudnick's method.  Let $f$ be a newform of weight $k$ and
  level $q$, let $\mathcal{Z}$ be the left
  $\Gamma_0(q)$-multiset of zeros of $f$ in $\mathbb{H}$ and let
  $\mathcal{Z}_1$ be the left $\Gamma$-multiset ($\Gamma =
  \PSL(2,\mathbb{Z})$) obtained by summing the images of
  $\mathcal{Z}$ under coset representatives for $\Gamma(1) /
  \Gamma_0(q)$.  We ask: does $\Gamma \backslash
  \mathcal{Z}_1$ equidistribute on $Y_0(1)$ as $q \rightarrow
  \infty$?  Following Rudnick, one may show for $\phi \in
  C_c^\infty(\mathbb{H})$ and $\Phi(z) = \sum_{\gamma \in
    \Gamma} \phi(\gamma z)$ that
  \begin{equation}\label{eq:26}
    \frac{12}{k \psi(q)}
    \sum_{z \in \Gamma \backslash \mathcal{Z}_1}
    \frac{ \Phi(z) }{\# \Stab_\Gamma(z)}
    = 
    \int_{\Gamma \backslash \mathbb{H}} \Phi \, d V \\
    +
    \int _{\Gamma \backslash \mathbb{H} }
    \frac{\pi_{q*}(\log \nu_f)}{k \psi(q)}
    \Delta \Phi \, d V,
  \end{equation}
  where $\psi(q) = [\Gamma(1):\Gamma_0(q)]$, $\Delta = y^2
  (\partial_x^2 + \partial_y^2)$ is the hyperbolic Laplacian,
  and $d V$ is the hyperbolic probability measure on $\Gamma
  \backslash \mathbb{H}$; the formula \eqref{eq:26} follows by
  some elementary manipulations of the identity
  $\int_{\mathbb{H}} \log |z-z_0| \Delta \phi(z) y^{-2} \, d x
  \, d y = 2 \pi \phi(z_0)$, which holds for any $z_0 \in
  \mathbb{H}$ and follows from Green's identities.  Since the
  total number of inequivalent zeros is $\# \Gamma \backslash
  \mathcal{Z}_1 = \# \Gamma_0(q) \backslash \mathcal{Z} \sim k
  \psi(q)/12$ \cite[\S 2]{MR0314766}, the first term on the
  right-hand side of \eqref{eq:26} may be regarded as a main
  term, the second as an error term that one would like to show
  tends to $0$.  An important step toward adapting Rudnick's
  method would be to rule out the possibility that
  $\pi_{q*}(\log \nu_f) / k \psi(q)$ tends to $-\infty$
  uniformly on compact subsets as $q \rightarrow \infty$.  The
  difficulty in doing so is that Theorem \ref{thm:1} does not
  seem to preclude the masses $\nu_f$ from being very small
  somewhere within each fiber of the projection $Y_0(q)
  \rightarrow Y_0(1)$; stated another way, the \emph{sum} of the
  values taken by $y^k |f|^2$ in a fiber of $Y_0(q) \rightarrow
  Y_0(1)$ are controlled (in an average sense
  as the fiber varies) by Theorem
  \ref{thm:1}, but their \emph{product} could still conceivably
  be quite small.  There are further difficulties in adapting
  Rudnick's method that we shall not mention here.
\end{remark}

\begin{remark}
  Lindenstrauss \cite{MR2195133} and Soundararajan
  \cite{MR2680500} proved that Maass
  eigencuspforms of fixed level $q$ and large Laplace eigenvalue
  $\lambda \rightarrow \infty$ have equidistributed mass.  We
  ask: do Maass newforms of large level $q \rightarrow \infty $
  (with $\lambda$ taken to lie in a fixed subinterval of
  $[1/4,+\infty]$, say) satisfy the natural analogue of
  Conjecture \ref{conj:1}?  An affirmative
  answer to this question would follow from the
  generalized Riemann hypothesis (at least for $q$
  squarefree, as in remark \ref{rmk:1}), but appears beyond the
  reach of our methods because the Ramanujan conjecture is not
  known for Maass forms (compare with
  \cite[p.2]{MR2680499}).
\end{remark}

\begin{remark}
  We shall actually establish the following
  stronger hybrid equidistribution result: for a newform $f$ of
  (possibly \emph{varying}) weight $k$ and squarefree level $q$,
  the measures $\mu_f = \pi_{q*}(\nu_f)$ equidistribute as $q k
  \rightarrow \infty$.
  The novelty in our argument concerns only the variation of
  $q$, so we encourage the reader to regard $k$ as fixed.
\end{remark}

\begin{remark}
  With minor modifications our arguments should extend to the
  general case of not necessarily squarefree levels $q$ as soon
  as an appropriate extension of Watson's formula is worked out.
  However, we shall invoke the assumption that the level $q$ is
  squarefree whenever doing so simplifies the exposition.
  The
  parts of our argument that require modification to treat the
  general case are
  Lemmas \ref{lem:expand-int-e-of-h}, \ref{lem:stupid-sums}, and
  \ref{lem:p-adic-integral}.  One should be able to generalize
  Lemmas \ref{lem:expand-int-e-of-h} and \ref{lem:stupid-sums}
  using that for any level $q$ the cusps of $\Gamma_0(q)$ fall
  into classes indexed by the divisors $d$ of $q$ consisting of
  $\phi(\gcd(d,q/d))$ cusps of width $d / \gcd(d,q/d)$.  To
  generalize \ref{lem:p-adic-integral}, one must compute
  (or sharply bound) a
  $p$-adic integral involving matrix coefficients of
  supercuspidal representations of $\GL(2,\mathbb{Q}_p)$.
  We plan to consider this generalization in future work.
\end{remark}

\subsection{A very brief review
  of the motivating work of Holowinsky-Soundararajan}
Our proof of Theorem \ref{thm:1}
is an adaptation
of the Holowinsky-Soundararajan proof
\cite{MR2680499} of Theorem
\ref{thm:hque-weight},
which in turn synthesizes the independent
arguments of Holowinsky \cite{MR2680498} and Soundararajan
\cite{MR2680497}.
Here we briefly recall their independent
arguments and refer to the survey
\cite{sarnak-progress-que} and the original papers for further
background.

Holowinsky \cite{MR2680498} employs a clever unfolding trick and an asymptotic
analysis of certain archimedean integrals in the limit $k
\rightarrow \infty$ to reduce the study of the periods
$\mu_f(\phi)$ to the problem of bounding sums roughly of the
form
\begin{equation}\label{eq:32}
  \sum_{n \ll  k} \lambda_f(n) \lambda_f(n +l),
\end{equation}
where $l$ is an essentially bounded nonzero integer and
$\lambda_f(n)$ the $n$th Fourier coefficient of the newform $f$
of weight $k \rightarrow \infty$ and level $1$, normalized so
that the Deligne bound reads $|\lambda_f(p)| \leq 2$.  He
reduces bounds for the sums \eqref{eq:32}
to those for the mean values $\sum_{n \leq k} |\lambda_f(n)|$
by a sieving technique that quantifies,
using the Deligne bound for $\lambda_f$,
the ``independence'' of the maps
$n \mapsto \lambda_f(n)$ and $n \mapsto \lambda_f(n+l)$ for $l
\neq 0$.

Soundararajan's method
\cite{MR2680497}
takes as input a precise identity (given in this case by Watson
\cite{watson-2008}) relating the
Weyl periods $\mu_f(\phi)$ for the equidistribution problem
(here $\phi$ is a Maass form of level $1$) to the central value
of the triple product $L$-function $L(\phi \times f \times
f,\tfrac{1}{2})$, which
he then bounds by a method that applies
more generally to any $L$-function $L(s,\pi)$ satisfying certain
hypotheses  that are implied by the generalized Ramanujan conjectures
when $\pi$ is automorphic.

\subsection{What's new in this paper}
The synthetic part of the Holowinsky-Soundararajan argument
works just as well in the level aspect as in the weight aspect
(see \S5), so we highlight here four of the more substantial difficulties
encountered in adapting the independent arguments of Holowinsky
and Soundararajan to the level aspect.



\begin{enumerate}
\item It is not a priori clear how best to extend Holowinsky's
  unfolding trick in the presence of multiple (possibly
  unboundedly many) cusps, nor what should take the place of his
  asymptotic analysis of archimedean integrals in studying the
  fixed weight, large level limit; several fundamentally
  different approaches are possible, one of which we shall
  present in \S\ref{sec:reduct-shift-sums}.

  When $q$ is squarefree,
  the problem then becomes to bound sums roughly of the
  form\footnote{Here one should think of a divisor $d$ of $q$ as indexing
    the unique cusp of $\Gamma_0(q)$ of width $d$,
    where as usual
    the \emph{width} of a cusp is its ramification index
    over the cusp $\infty$ for $\Gamma_0(1)$.}
  \begin{equation}\label{eq:30}
    \sum_{d|q}
    \sum_{n \ll  d k} \lambda_f(n) \lambda_f(n+ d l),
  \end{equation}
  where again $l \neq 0$ is essentially bounded.
  As we now explain, the sums \eqref{eq:30}
  differ from
  those \eqref{eq:32} studied by Holowinsky
  in two important ways.

\item The shifts $d l$ are now nearly as large as the length of
  the interval $\approx d k$ over which we are
  summing.\footnote{This difficulty corresponds the fact that
    cusps for $\Gamma_0(q)$ may have \emph{large} width.}  Much
  of the existing work on bounds for such sums (see remark
  \ref{rmk:bounds-indiv-shift}) applies only when the shift is
  substantially smaller than the summation interval.
  Holowinsky's treatment of \eqref{eq:32} \emph{does} allow
  shifts as large as the summation interval, but gives a bound
  for $\sum_{n \ll q k} \lambda_f(n) \lambda_f(n + q l)$ that
  involves an extraneous factor of $\tau(q l)$, which is
  prohibitively large (e.g., $\gg \log(q)^A$ for any $A$) if
  $q$ has many small prime factors.  In Theorem
  \ref{thm:holow-sums-2}, we refine Holowinsky's method to
  allows shifts as large as the summation interval
  with \emph{full uniformity} in the size of the shift,
  e.g., without the factor $\tau(q l)$.
  This refinement may be of independent interest.
\item Let $\omega(q)$ denote the number of prime divisors of the
  squarefree integer $q$.
  Then the number of shifted sums in \eqref{eq:30}
  is $2^{\omega(q)}$,
  which can be quite large.\footnote{This difficulty corresponds
    to the fact that $\Gamma_0(q)$ may have \emph{many} cusps.}
  In the crucial case\footnote{Soundarajan's argument succeeds
    unless this is so.} that $|\lambda_f(p)|$ is typically
  small for primes $p \ll q k$, our refinement of Holowinsky's
  method saves nearly two logarithmic powers of $d k$ over the
  trivial bound $\ll d k$ for the shifted sum in \eqref{eq:30}
  of length $\approx d k$.
  Thus we save very little over the
  trivial bound if $d$ is a small divisor of $q$, and it is not
  immediately clear whether such savings are enough to
  produce a sufficient saving in the sum over all $d$.  One needs here an
  inequality of the shape
  \begin{equation}\label{eq:33}
    \sum_{d | q} \frac{d k}{\log(d k)^{2- \eps}}
    \ll
    \frac{q k}{\log(q k)^{2 - \eps}}
    \log \log (e^e q),
  \end{equation}
  which one can interpret as saying that the divisors of
  any squarefree
  integer are well distributed in a certain sense.  Indeed, if
  hypothetically $q$ were to have ``too many'' large divisors,
  then the LHS of \eqref{eq:33} might be large enough
  to swamp the
  small logarithmic savings, while if $q$ were to
  have ``too many'' small divisors, then the savings for each
  term on the LHS might be too small to produce an overall
  savings.  A convexity argument and a (weak form of the) prime
  number theorem are sufficient to establish \eqref{eq:33}; see
  Lemma \ref{lem:stupid-sums}.
\item
  The identity relating $\mu_f(\phi)$
  to $L(\phi \times f \times f, \tfrac{1}{2})$
  that Soundararajan's method takes as input
  is given by Watson
  \cite{watson-2008}
  when $f$ and $\phi$ are newforms of the \emph{same}
  (squarefree)
  level.
  In the level aspect,
  the relevant
  Weyl periods are those for which $f$ has large level
  and $\phi$ has fixed level,
  so Watson's formula does not apply.
  We extend Watson's
  result in Theorem \ref{thm:watson-ext}
  by computing (Lemma
  \ref{lem:p-adic-integral})
  a $p$-adic integral arising
  in Ichino's general formula \cite{MR2449948},
  specifically
  \begin{equation}\label{eq:34}
    \int_{g \in \PGL_2(\mathbb{Q}_p)}
    \frac{
      \langle g \cdot \phi_p, \phi_p \rangle
    }{
      \langle \phi_p, \phi_p \rangle
    }
    \frac{
      \langle g \cdot f_p, f_p \rangle
    }{
      \langle f_p, f_p \rangle
    }
    \frac{
      \langle g \cdot f_p, f_p \rangle
    }{
      \langle f_p, f_p \rangle
    }
    \, d g,
  \end{equation}
  where $\phi_p$ (resp$.$ $f_p$) is the newvector at $p$ for the
  adelization of $\phi$ (resp$.$ $f$) and $\langle , \rangle$
  denotes a $\PGL(2,\mathbb{Q}_p)$-invariant Hermitian pairing on
  the appropriate representation space.  The crucial case for us
  is when $p$ divides the squarefree level $q$ of the newform $f$,
  so that $\phi_p$ lives in a spherical representation of
  $\PGL_2(\mathbb{Q}_p)$ and $f_p$ in a special representation.
  As we discuss in remark
  \ref{rmk:an-extension-watsons-1},
  our evaluation of \eqref{eq:34} leads to a precise
  formula relating $\int \psi_1 \psi_2 \psi_3$ to
  $L(\tfrac{1}{2},\psi_1 \times \psi_2 \times \psi_3)$ for
  \emph{any} three newforms of squarefree level
  (and trivial central character); such an identity
  should be of general use in
  future work that exploits the connection
  between periods and $L$-values.
\end{enumerate}

\subsection{Plan for the paper}\label{sec:outline-proof}
Our paper is organized as follows.  In \S
\ref{sec:modular-forms-their} we recall some standard properties
of our basic objects of study: holomorphic newforms, Maass
eigencuspforms, unitary Eisenstein series and incomplete
Eisenstein series.
In \S \ref{sec:main-estimates} we prove the level aspect
analogue of Holowinsky's main result \cite[Corollary
3]{MR2680498}, as described above;
we emphasize the aspects of his argument
that do not immediately generalize
to the level aspect and refer to his paper
for the details of arguments that do.
In \S \ref{sec:an-extension-watsons} we extend Watson's formula
to cover the additional case that we need.
In \S \ref{sec:proof-theor-refthm:1} we
complete the proof of Theorem \ref{thm:1} using the main results
of \S \ref{sec:main-estimates} and
\S \ref{sec:an-extension-watsons}.
Sections
\ref{sec:main-estimates} and \ref{sec:an-extension-watsons} are
independent of each other, but both depend upon the
definitions, notation and facts recalled in \S
\ref{sec:modular-forms-their}.

\subsection{Notation and conventions}\label{sec:notation-conventions}
Recall the standard notation
for the upper half-plane
$\mathbb{H} = \{z \in \mathbb{C} : \Im(z) > 0\}$, the modular
group $\Gamma = \SL(2,\mathbb{Z}) \circlearrowright \mathbb{H}$
acting by fractional linear transformations,
its congruence subgroup $\Gamma_0(q)$
consisting of those elements with lower-left entry
divisible by $q$,
the modular curve $Y_0(q) = \Gamma_0(q) \backslash \mathbb{H}$,
the natural projection $\pi_q : Y_0(q)
\rightarrow Y_0(1)$,
the Poincar\'{e} measure $d \mu = y^{-2} \, d x \, d y$,
and the stabilizer  $\Gamma_\infty = \{\pm \left(
  \begin{smallmatrix}
    1&n\\
    &1
  \end{smallmatrix}
\right) : n \in \mathbb{Z} \}$ in $\Gamma$ of $\infty
\in \mathbb{P}^1(\mathbb{R})$.
We denote a typical element of $\mathbb{H}$ as
$z = x + i y $ with $x, y \in \mathbb{R} $.


There is a natural inclusion
$C_c(Y_0(1)) \hookrightarrow C_c(Y_0(q))$
obtained by pulling back under the projection $\pi_q$; here $C_c$ denotes the space of compactly supported
continuous functions.
For a newform $f$ of weight $k$ on $\Gamma_0(q)$ the
pushforward measure $d \mu_f := \pi_{q*}(|f|^2 y^k \, d \mu)$ on
the modular curve $Y_0(1)$
corresponds, by definition, to the linear functional
\[
\mu_f(\phi) = 
\int_{\Gamma_0(q) \backslash \mathbb{H}}
\phi(z) |f|^2(z) y^k \, \frac{d x \, d y}{y^2}
\quad \text{ for } \phi \in C_c(Y_0(1)) \hookrightarrow C_c(Y_0(q)).
\]
We let $\mu$ denote the standard
measure on $Y_0(1)$, so that
\[
\mu(\phi)
= \int_{\Gamma \backslash \mathbb{H}}
\phi(z) \, \frac{d x \, d y }{y^2 }
\quad \text{ for } \phi \in C_c(Y_0(1)).
\]
Since $\mu$ and $\mu_f$ are finite, they extend to the space of
bounded continuous functions on $Y_0(1)$, where we shall denote
also by $\mu$ and $\mu_f$ their extensions.  In particular,
$\mu(1)$ denotes the volume of $Y_0(1)$ and $\mu_f(1)$ the
Petersson norm of $f$.

As is customary, we let $\eps > 0$ denote a sufficiently small
positive number whose precise value may change from line to
line.  We use the asymptotic notation $f(x,y,z) \ll_{x,y}
g(x,y,z)$ to indicate that there exists a positive real
$C(x,y)$, possibly depending upon $x$ and $y$ but not
upon $z$, such that $| f(x,y,z)| \leq C(x,y) |g(x,y,z)|$ for all
$x,y$, and $z$ under consideration.  We write $f(x,y,z) =
O_{x,y}(g(x,y,z))$ synonymously for $f(x,y,z) \ll_{x,y}
g(x,y,z)$ and write $f(x,y,z) \asymp_{x,y} g(x,y,z)$ synonymously for
$f(x,y,z) \ll_{x,y} g(x,y,z) \ll_{x,y} f(x,y,z)$.

\subsection{Weyl's criterion}
The following standard lemma provides essential motivation for
what follows.
\begin{lemma}\label{prop:3}
  Suppose that
  for each fixed Maass eigencuspform
  or incomplete Eisenstein series $\phi$,
  we have
  \[
  \frac{\mu_f(\phi)}{\mu_f(1)}
  \rightarrow \frac{\mu(\phi)}{\mu(1)}
  \quad
  \text{ as } q k \rightarrow \infty
  \]
  for $q$ squarefree and $f$ a holomorphic newform of weight $k$
  and level $q$;
  the convergence need not be uniform in $\phi$.
  Then Theorem \ref{thm:1} is true.
\end{lemma}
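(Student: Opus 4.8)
The plan is to run the standard Weyl-criterion argument, deducing the equidistribution of the normalized masses $\bar\mu_f := \mu_f/\mu_f(1)$ tested against an arbitrary $\phi \in C_c(Y_0(1))$ from the two cases furnished by the hypothesis. Since each $\bar\mu_f$ and $\bar\mu := \mu/\mu(1)$ is a probability measure on $Y_0(1)$, a routine mollification (uniform approximation of $\phi$ by smooth functions supported in a fixed compact set) reduces us to the case $\phi \in C_c^\infty(Y_0(1))$; throughout, the hypothesis is invoked only for individual fixed test functions, so the lack of uniformity in $\phi$ that it permits is immaterial.

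The first ingredient I would establish is that no mass escapes into the cusp: for every $\eps > 0$ there is a compact $K \subset Y_0(1)$ with $\bar\mu_f(Y_0(1) \setminus K) < \eps$ uniformly for $f$ in the sequence with $qk$ large, the finitely many remaining $f$ posing no problem since each $\bar\mu_f$ is an individual finite measure. For this I would choose a nonnegative incomplete Eisenstein series $\Psi$ that dominates the indicator of a horoball neighbourhood $Y_0(1) \setminus K$ of the cusp and whose average $\bar\mu(\Psi) = \mu(\Psi)/\mu(1)$ is $O(\eps)$ (both achievable by a suitable choice of profile), so that the hypothesis gives $\bar\mu_f(Y_0(1)\setminus K) \leq \bar\mu_f(\Psi) \to \bar\mu(\Psi) = O(\eps)$. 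Consequently the family $\{\bar\mu_f\}$ is tight, and it suffices to show that every weak-$*$ subsequential limit coincides with $\bar\mu$.

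The second ingredient is the spectral decomposition on $Y_0(1)$. For fixed $\phi \in C_c^\infty(Y_0(1))$ I would write
\[
\phi \;=\; c \cdot 1 \;+\; \sum_j \langle \phi, u_j \rangle u_j \;+\; \phi_{\mathrm{Eis}}, \qquad c = \frac{\mu(\phi)}{\mu(1)},
\]
where $\{u_j\}$ is an orthonormal basis of Maass eigencuspforms and $\phi_{\mathrm{Eis}}$ is the continuous-spectrum component. The constant term contributes identically, namely $c$, to both $\bar\mu_f(\phi)$ and $\bar\mu(\phi)$, since $\bar\mu_f(1) = \bar\mu(1) = 1$. The coefficients $\langle \phi, u_j \rangle$ decay faster than any power of the eigenvalue $\lambda_j$ (integrate by parts against $\Delta$), while $\|u_j\|_\infty$ grows only polynomially in $\lambda_j$ and the $\lambda_j$ obey the Weyl law; hence $\sum_j \langle \phi, u_j \rangle u_j$ converges absolutely and uniformly on $Y_0(1)$, one may apply $\bar\mu_f$ term by term, and since $\langle \phi, u_j \rangle \bar\mu_f(u_j) \to \langle \phi, u_j \rangle \bar\mu(u_j) = 0$ for each $j$ by the hypothesis (the $u_j$ being orthogonal to the constants) with the terms dominated by a summable sequence independent of $f$, dominated convergence gives $\bar\mu_f\bigl(\sum_j \langle \phi, u_j \rangle u_j\bigr) \to 0$. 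Finally, $\phi_{\mathrm{Eis}}$ is handled through incomplete Eisenstein series: moving the defining Mellin contour of an incomplete Eisenstein series past the pole of the unitary Eisenstein series $E(z,s)$ at $s = 1$ produces the residual (constant) spectrum together with the continuous part, which lets one approximate $\phi_{\mathrm{Eis}}$, up to a constant function, by a finite linear combination of incomplete Eisenstein series with an error small in sup-norm on any prescribed compact set; the tightness from the first ingredient controls the contribution of the complement, and the hypothesis applies to each incomplete Eisenstein series appearing. Combining the three contributions yields $\bar\mu_f(\phi) \to \bar\mu(\phi)$, and sending the approximation parameters to their limits finishes the proof.

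The one genuinely delicate point is this last step, the interaction of the continuous spectrum with the cusp: this is exactly why the hypothesis is stated in terms of incomplete rather than unitary Eisenstein series, since an individual $E(z, \tfrac12 + it)$ grows like $y^{1/2}$ near the cusp and cannot be paired with $\bar\mu_f$ naively, whereas incomplete Eisenstein series with compactly supported profile are bounded (indeed compactly supported on $Y_0(1)$) while their continuous content still covers enough of the spectrum. This mechanism is classical, underlying the mass equidistribution results for Eisenstein and cusp forms of Luo and Sarnak \cite{luo-sarnak-mass, MR1361757}; I would cite it rather than redo it in detail.
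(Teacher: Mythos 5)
There is a genuine gap in your first ingredient, and it is one your own closing paragraph already points to: an incomplete Eisenstein series $E(\Psi,\cdot)$ with profile $\Psi \in C_c^\infty(\mathbb{R}_+^*)$ --- the only kind covered by the hypothesis of the lemma --- is \emph{compactly supported} on $Y_0(1)$ (if $\mathrm{supp}\,\Psi \subset [a,b]$ then $E(\Psi,z)$ vanishes once $y > \max(b,1/a)$, since the identity coset contributes $\Psi(y)=0$ and every other coset has $\Im(\gamma z) \le 1/y < a$). Consequently no such series can dominate the indicator function of a horoball neighbourhood of the cusp, and the inequality $\mu_f(Y_0(1)\setminus K)/\mu_f(1) \le \mu_f(E(\Psi,\cdot))/\mu_f(1)$ you want is unavailable. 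If instead you take a profile supported on $[T,\infty)$, the resulting series is no longer an incomplete Eisenstein series in the sense of the hypothesis, so you are not entitled to the convergence $\mu_f(E(\Psi,\cdot))/\mu_f(1) \to \mu(E(\Psi,\cdot))/\mu(1)$ for it. Since your treatment of the continuous spectrum leans on this tightness (you approximate $\phi_{\mathrm{Eis}}$ in sup-norm only on a compact set and use tightness to control the complement), the gap propagates to the end of the argument. Extracting no-escape-of-mass directly from the stated hypothesis is not a soft matter, and the proposal does not supply a valid mechanism for it.

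The paper's own proof sidesteps tightness entirely: it observes that the functionals $\phi \mapsto \mu_f(\phi)/\mu_f(1)$ are probability measures, hence $1$-Lipschitz for the supremum norm, and then invokes the fact (cited to Iwaniec \cite{MR1942691}) that the \emph{global} uniform closure of the span of Maass eigencuspforms and incomplete Eisenstein series contains $C_c(Y_0(1))$. Because the approximation is uniform all the way into the cusp --- where the target function and the approximants all vanish --- no separate control of the mass near the cusp is needed. Your spectral-decomposition architecture is essentially an unpacking of that density statement, and most of it (rapid decay of $\langle\phi,u_j\rangle$, uniform convergence of the cuspidal sum, the treatment of the constant) is sound; what is missing is either a correct proof of tightness or, better, the observation that a globally uniform approximation renders tightness unnecessary. (Once the full conclusion $\mu_f(\phi)/\mu_f(1)\to\mu(\phi)/\mu(1)$ for all $\phi \in C_c(Y_0(1))$ is in hand, tightness does follow a posteriori because the limit is a probability measure; but it cannot be used as an input in the way you use it.)
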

\begin{proof}
  The family of
  probability measures $\phi \mapsto \mu_f(\phi)/\mu_f(1)$
  obtained as $f$ varies
  is equicontinuous for the supremum
  norm on $C_c(Y_0(1))$,
  since
  $\left\lvert \mu_f(\phi_1)/\mu_f(1)
    - \mu_f(\phi_2)/\mu_f(1)
  \right\rvert \leq \sup \left\lvert \phi_1 - \phi_2 \right\rvert$
  for any bounded functions $\phi_1, \phi_2$
  on $Y_0(1)$.
  Thus Theorem \ref{thm:1} follows
  if we can show that $\mu_f(\phi)/\mu_f(1) \rightarrow
  \mu(\phi)/\mu(1)$ as $q \rightarrow \infty$ for a set of
  bounded functions $\phi$ the uniform closure of whose span contains
  $C_c(Y_0(1))$; such a set is furnished
  \cite{MR1942691}
  by the Maass eigencuspforms and incomplete Eisenstein series
  as defined in
  \S \ref{sec:modular-forms-their}.
\end{proof}

\subsection{Acknowledgements}
\thanks{We thank Dinakar Ramakrishnan for suggesting this
  problem and for his very helpful feedback and comments on
  earlier drafts of this paper.  We thank Abhishek Saha for his
  careful reading of, and useful comments on, an earlier draft.
  The problem that we address was raised explicitly by
  K. Soundararajan at the 2010 Arizona Winter School;
  we thank him as well as Roman Holowinsky,
  Henryk Iwaniec,
  Philippe Michel, and Peter Sarnak for their encouragement.
  We thank the referees for their numerous
  helpful suggestions and corrections.
  This work represents part of the author's doctoral
  disseration written at the California
  Institute of Technology.
}

\section{Background on automorphic
  forms}\label{sec:modular-forms-their}
We collect here some standard properties
of classical automorphic forms.
We refer to
Serre \cite{MR0344216}, Shimura \cite{MR0314766},
Iwaniec \cite{Iw97,MR1942691}
and Atkin-Lehner \cite{MR0268123}
for complete definitions and proofs.

\subsection{Holomorphic
  newforms}\label{sec:holomorphic-newforms}
Let $k$ be a positive even integer,
and let $\alpha$ be an element of $\GL(2,\mathbb{R})$ with
positive determinant; the element $\alpha$ acts on
$\mathbb{H}$ by fractional linear transformations
in the usual way.
Given a function $f :  \mathbb{H} \rightarrow \mathbb{C}$,
we denote by $f|_k \alpha$ the function
$z \mapsto \det(\alpha)^{k/2} j(\alpha,z)^{-k} f(\alpha
z)$,
where $j \left( \left(
    \begin{smallmatrix}
      a&b\\
      c&d
    \end{smallmatrix}
  \right), z  \right) = c z + d$.

A \emph{holomorphic cusp form} on $\Gamma_0(q)$ of weight $k$ is
a holomorphic function $f : \mathbb{H} \rightarrow \mathbb{C} $
that satisfies $f|_k \gamma = f$ for all $\gamma \in
\Gamma_0(q)$ and vanishes at the cusps of $\Gamma_0(q)$.  A
\emph{holomorphic newform} is a cusp form that is an eigenform
of the algebra of Hecke operators and orthogonal with respect to
the Petersson inner product to the oldforms.\footnote{The terms
  we leave undefined are standard and their precise definitions,
  which may be found in the references mentioned above, are not
  necessary for our purposes.}
We say
that a holomorphic newform $f$ is a \emph{normalized
  holomorphic newform} if moreover $\lambda_f(1) = 1$ in the
Fourier expansion
\begin{equation}\label{eq:f-fourier}
  y^{k/2} f(z) = \sum_{n \in \mathbb{N}}
  \frac{\lambda_f(n)}{\sqrt{n}}
  \kappa_f(n y) e(n x),
\end{equation}
where $\kappa_f(y) = y^{k/2} e^{- 2 \pi y}$ and $e(x) = e^{2 \pi
  i x}$; in that case the Fourier coefficients $\lambda_f(n)$
are real, multiplicative, and satisfy
\cite{deligne-l-adic,deligne-weil-1} the Deligne bound
$|\lambda_f(n)| \leq \tau(n)$, where $\tau (n)$ denotes the
number of positive divisors of $n$.
If $\gamma \in \Gamma_0(q)$
and $z ' = \gamma z = x' + i y ' $,
then $y'^{k/2} f(z') = (j(\gamma,z)/|j(\gamma,z)|)^k y^{k/2}
f(z)$,
so that in particular $z \mapsto y^k |f(z)|^2$ is
$\Gamma_0(q)$-invariant
and our definition of $\mu_f$ given in Section
\ref{sec:notation-conventions}
makes sense.

To a newform $f$ one attaches the finite part
of the adjoint $L$-function
$L(\ad f, s) = \prod_p L_p(\ad f,s)$
and its completion $\Lambda(\ad f, s)
= L_\infty(\ad f, s) L(\ad f,s)
= \prod_v L_v(\ad f, s)$,
where $p$ traverses the set of primes
and $v$ the set of places of $\mathbb{Q}$;
the local factors $L_v(\ad f, s)$ are
as in
\cite[\S 3.1.1]{watson-2008}.
The Rankin-Selberg method \cite{Ra39,Se40}
and a standard calculation
\cite[\S 3.2.1]{watson-2008} show that
\begin{equation}\label{eq:2}
  \mu_f(1) :=
  \int _{\Gamma_0(q) \backslash \mathbb{H} }
  |f|^2(z) y^k \, \frac{d x \, d y}{y^2}
  =
  q
  \frac{\Gamma(k-1)}{(4 \pi)^{k-1}}
  \frac{k - 1}{2 \pi ^2 }
  L(\ad f,1).
\end{equation}
As in the analogous weight aspect
\cite[p.7]{MR2680499}, the work of
Gelbart-Jacquet \cite{MR533066} (following Shimura \cite{Sh75})
and the theorem of Hoffstein-Lockhart \cite[Theorem 0.1]{HL94}
(with appendix by Goldfeld-Hoffstein-Lieman) imply that
\begin{equation}\label{eq:hlghl}
  L(\ad f,1)^{-1} \ll \log(q k).
\end{equation}

Let $\sigma$ traverse a set of representatives
for the double coset space $\Gamma_\infty \backslash \Gamma /
\Gamma_0(q)$.
Then the points $\mathfrak{a}_\sigma  := \sigma^{-1} \infty
\in \mathbb{P}^1(\mathbb{Q})$
traverse a set of inequivalent cusps of $\Gamma_0(q)$.
The integer
$d_\sigma := [\Gamma_\infty : \Gamma_\infty \cap \sigma
\Gamma_0(q) \sigma^{-1}]$ is the width of the cusp
$\mathfrak{a}_\sigma$,
while
\[
w_\sigma := \sigma^{-1} \begin{pmatrix}
  d_\sigma  &  \\
   & 1
\end{pmatrix}
\]
is the scaling matrix for $\mathfrak{a}_\sigma$,
which means that $z \mapsto z_\sigma := w_\sigma z$
is a proper isometry of $\mathbb{H}$ under which
$z_\sigma \mapsto z_\sigma + 1$
corresponds
to the action on $z$ by
a generator for the $\Gamma_0(q)$-stabilizer of
$\mathfrak{a}_\sigma$.

If the bottom row of $\sigma^{-1}$
is $(c,d)$, then $d_\sigma = q / (q,c^2)$;
moreover, as $\sigma$ varies,
the multiset of widths $\{d_\sigma\}$ is
the set $\{d : d|q\}$ of positive divisors of $q$ \cite[\S
2.4]{MR1942691}.
In particular, $c$ and $d_\sigma$ are coprime,
so we may and shall assume (after multiplying
$\sigma$ on the left by an element of $\Gamma_\infty$
if necessary)
that $d_\sigma$ divides $d$.
Since $q$ is squarefree,
the numbers $d_\sigma$ and $q/d_\sigma$ are coprime,
so that $w_\sigma$ is an Atkin-Lehner operator
``$W_Q$''
in the sense of \cite[p.138]{MR0268123}.
Thus by applying \cite[Thm 3]{MR0268123} to the newform
$f$, we obtain
\begin{equation}\label{eq:atkin-lehner}
  f |_k w_\sigma = \pm f.
\end{equation}
Since $f$ is $\Gamma_0(q)$-invariant,
the property \eqref{eq:atkin-lehner} does not depend
upon the choice of coset representative $\sigma$.

\subsection{Maass eigencuspforms}
A \emph{Maass cusp form} (of level $1$) is a $\Gamma$-invariant
eigenfunction of the hyperbolic Laplacian $\Delta := y^{-2}
(\partial_x^2 + \partial_y^2)$ on $\mathbb{H}$ that decays
rapidly at the cusp of $\Gamma$.  By Selberg's ``$\lambda_1 \geq
1/4$'' theorem \cite{MR0182610} there exists a real number $r
\in \mathbb{R}$ such that $(\Delta + 1/4 + r^2) \phi =
0$;
our arguments use only that $r \in \mathbb{R} \cup
i(-1/2,1/2)$, and so apply verbatim in contexts where
``$\lambda_1 \geq 1/4$'' is not known.

A \emph{Maass
  eigencuspform} is a Maass cusp form that is an eigenfunction
of the (non-archimedean) Hecke operators and the involution
$T_{-1} : \phi \mapsto [z \mapsto \phi(-\bar{z})]$,
which commute one another as well as with $\Delta$.
A Maass eigencuspform $\phi$ has
a Fourier expansion
\begin{equation}\label{eq:phi-fourier}
  \phi (z) = \sum _{n \in \mathbb{Z} _{\neq 0}}
  \frac{\lambda_\phi(n)}{\sqrt{|n|}}
  \kappa_{i r}(n y) e( n x)
\end{equation}
where $\kappa_{i r}(y) = 2 |y|^{1/2} K_{i r}(2 \pi |y|)
\sgn(y)^{\frac{1+\delta}{2}}$ with
$K_{ir}$ the standard $K$-Bessel function, $\sgn(y) = 1$ or $-1$
according
as $y$ is positive or negative,
and $\delta \in \{\pm 1\}$ the $T_{-1}$-eigenvalue of $\phi$.  We have
$|\kappa_s(y)| \leq 1$ for all $s \in i \mathbb{R} \cup
(-1/2,1/2)$ and all $y \in \mathbb{R}_+^*$.  
A \emph{normalized Maass
  eigencuspform} further satisfies $\lambda_\phi(1) = 1$; in
that case the coefficients $\lambda_\phi(n)$ are real,
multiplicative, and
satisfy,
for each
$x \geq 1$, the Rankin-Selberg bound \cite[Theorem
3.2]{MR1942691}
\begin{equation}\label{eq:rs-bound}
  \sum_{n \leq x} \lvert \lambda_\phi(n) \rvert^2
  \ll_\phi  x.
\end{equation}

Because $f(-\bar{z}) = \overline{f(z)}$ for any normalized holomorphic
newform $f$, we have $\mu_f(\phi) = 0$ whenever
$T_{-1} \phi = \delta \phi$ with $\delta = -1$.  Thus we shall assume throughout this
paper
that $\delta = 1$, i.e., that $\phi$ is an \emph{even} Maass form.

\subsection{Eisenstein series}
Let $s \in \mathbb{C}$ and $z \in \mathbb{H}$.
The \emph{real-analytic Eisenstein series}
$E(s,z) = \sum_{\Gamma_\infty \backslash \Gamma}
  \Im(\gamma z)^s$
converges normally for $\Re(s) > 1$ and continues
meromorphically to the half-plane $\Re(s) \geq 1/2$ where
the map $s \mapsto E(s,z)$ is
holomorphic with the exception of a unique simple pole at $s =
1$ of constant residue $\res_{s=1} E(s,z) = \mu(1)^{-1}$.
The Eisenstein
series satisfies the invariance
$E(s,\gamma z)
= E(s,z)$ for all $\gamma \in \Gamma$
and
admits the Fourier expansion
\begin{equation}\label{eq:24}
  E(s,z)
  = y^s + M(s)  y^{1-s} 
  + \frac{1}{\xi(2s )}\sum_{n \in \mathbb{Z}_{\neq 0}}
  \frac{\lambda_{s-1/2}(n)}{\sqrt{|n|}}
  \kappa_{s-1/2}(n y) e(n x),
\end{equation}
where $\lambda_s(n) = \sum_{a b = n} (a/b)^s$, $\kappa_s(y) = 2
|y|^{1/2} K_s(2 \pi |y|)$, $M(s) = \xi(2s-1)/\xi(2 s)$, $\xi(s)
= \Gamma_\mathbb{R}(s) \zeta(s)$, $\Gamma_\mathbb{R}(s) =
\pi^{-s/2} \Gamma(s/2)$, and $\zeta(s) = \sum_{n \in \mathbb{N}}
n^{-s}$ (for $\Re(s) > 1$) is the Riemann zeta function.
The identity $|M(s)| = 1$ for $\Re(s) = 1/2$
follows from
(for instance) the functional equation for the zeta function and the prime
number theorem.
When $\Re(s) = 1/2$ we call $E(s,z)$ a
\emph{unitary Eisenstein series}.

\subsection{Incomplete Eisenstein series}
Let $\Psi \in C_c^\infty(\mathbb{R}_+^*)$ be a nonnegative-valued
test function with Mellin transform
$\Psi ^\wedge(s) = \int _0^\infty \Psi(y) y^{-s-1} \, d y$.
Repeated partial integration shows that
$|\Psi^\wedge(s)| \ll_{\Psi,A} (1 + |s|)^A$ for each positive
integer $A$,
uniformly for $s$ in vertical strips.
The Mellin inversion formula asserts that
$\Psi(y) = \int _{(2)} \Psi^\wedge(s) y^{s} \, \frac{d s}{2 \pi
  i}$,
where $\int_{(\sigma)}$ denotes the integral taken over the vertical
contour
from $\sigma - i \infty$ to $\sigma + i \infty$.
To such $\Psi $ we attach the \emph{incomplete Eisenstein series}
\begin{equation}\label{eq:3}
  E(\Psi,z) = \sum_{\gamma \in \Gamma _\infty \backslash \Gamma}
  \Psi( \Im(\gamma z)).
\end{equation}
The sum has a uniformly bounded finite number
of nonzero terms for $z$ in a fixed compact subset of $\mathbb{H}$.
By Mellin inversion, the rapid decay of
$\Psi^\wedge$ and Cauchy's theorem, we have
\begin{equation}\label{eq:1}
  E(\Psi,z)
  = \int _{(2 )}
  \Psi^\wedge(s) E(s,z) \, \frac{d s}{2 \pi i }
  =
  \frac{\Psi^\wedge(1)}{\vol(\Gamma \backslash \mathbb{H})}
  + \int_{(1/2)} \Psi^\wedge(s) E(s,z) \, \frac{d s}{ 2 \pi i}.
\end{equation}

Let $\phi = E(\Psi,\cdot)$ be an incomplete Eisenstein series.
Note that $\mu(\phi) = \Psi^\wedge(1)$.
By comparing
(\ref{eq:1}) and \eqref{eq:24},
we may
express the Fourier coefficients $\phi_n(y)$ 
in the Fourier series
$\phi(z) = \sum_{n \in \mathbb{Z}} \phi_n(y) e(n x)$ as
\begin{align}\label{eq:9}
  \phi_n(y) &= \int_{(1/2)}
  \frac{\Psi^\wedge(s)}{\xi(2s )} \frac{\lambda_{s-1/2}(n)}{\sqrt{|n|}}
  \kappa_{s-1/2}(n y) \, \frac{d s}{2 \pi i}
  &(n \neq 0), \\ \label{eq:10}
  \phi_0(y) &= \frac{\mu(\phi)}{\mu(1)}
  + \int_{(1/2)}
  \Psi^\wedge(s) \left( y^s + M(s) y^{1-s} \right)
  \, \frac{d s}{2 \pi i} &(n = 0).
\end{align}

\section{Main estimates}\label{sec:main-estimates}
We prove a level aspect analogue of Holowinsky's
main bound \cite[Corollary 3]{MR2680498}.
To formulate our result, 
define for each normalized holomorphic newform $f$
and each real number $x \geq 1$ the quantities
\begin{equation}\label{eq:12}
  M_f(x)
  = \frac{\prod_{p \leq x} (1 + 2 |\lambda_f(p)|/p)}{\log(e x)^2
    L(\ad f,1)},
  \quad
  R_f(x) =
  \frac{x^{-1/2}}{L(\ad f,1)}
  \int_{\mathbb{R}}
  \left\lvert \frac{L(\ad f, \tfrac{1}{2} + i t)}{(1 +
      |t|)^{10}} \right\rvert
  \, d t.
\end{equation}
In \S \ref{sec:proof-theor-refthm:1} we shall refer only to
the definitions (\ref{eq:12}) and the statement of the following
theorem, not its proof.
\begin{theorem}\label{thm:holow}
  Let $f$ be a normalized holomorphic
  newform of weight $k$ and squarefree
  level $q$.
  If $\phi$ is a Maass eigencuspform, then
  \[
  \frac{\mu_f(\phi)}{\mu_f(1)} \ll_{\phi,\eps}
  \log(qk)^{\eps} M_f(qk)^{1/2}.
  \]
  If $\phi$ is an incomplete Eisenstein series, then
  \[
  \frac{\mu_f(\phi)}{\mu_f(1)}
  - \frac{\mu(\phi)}{\mu (1) }
  \ll_{\phi,\eps}
  \log(qk)^\eps M_f(qk)^{1/2} \left( 1 + R_f(qk) \right).
  \]
\end{theorem}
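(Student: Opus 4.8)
The plan is to follow Holowinsky's strategy adapted to the multi-cusp setting, reducing the Weyl period $\mu_f(\phi)/\mu_f(1)$ to a weighted sum of shifted convolution sums of the form \eqref{eq:30}, and then bounding those sums by the refined sieve estimate of Theorem~\ref{thm:holow-sums-2} (stated later). First I would unfold the period: instead of Holowinsky's single-cusp unfolding, one writes $Y_0(q)$ as a union of cuspidal neighbourhoods indexed by the double cosets $\sigma \in \Gamma_\infty \backslash \Gamma / \Gamma_0(q)$, and after applying the scaling matrices $w_\sigma$ (which are Atkin--Lehner operators by the discussion around \eqref{eq:atkin-lehner}, so $f|_k w_\sigma = \pm f$), the integral $\int_{\Gamma_0(q)\backslash\mathbb{H}} \phi(z)\,y^k|f(z)|^2\,d\mu$ becomes a sum over $d \mid q$ (the widths $d_\sigma$) of integrals of $\phi$ against $|f|^2$ over a fundamental strip, rescaled by the cusp width. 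Inserting the Fourier expansion \eqref{eq:f-fourier} of $f$ and the Fourier expansion of $\phi$ (cuspidal \eqref{eq:phi-fourier}, or \eqref{eq:9}--\eqref{eq:10} for incomplete Eisenstein), carrying out the $x$-integration forces a diagonal $n - m = dl$ for integers $l$, and the $y$-integration produces archimedean weights that, crucially, localize $n$ to the range $n \ll d k$. The nonzero-frequency terms thus assemble into $\sum_{d\mid q}\sum_{l\neq 0}(\text{weight in } l)\sum_{n \ll dk}\lambda_f(n)\lambda_f(n+dl)$, with the $l$-weight decaying fast enough (using the bounds $|\Psi^\wedge(s)| \ll (1+|s|)^{-A}$ and $|\kappa_s| \le 1$) that effectively only bounded $l$ matter; for the incomplete Eisenstein case the zero-frequency term $\phi_0(y)$ contributes the main term $\mu(\phi)/\mu(1)$ together with an error governed by $R_f(qk)$ coming from the $L(\ad f, \tfrac12 + it)$ integral in \eqref{eq:10}.

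Next I would bound each shifted sum. By Theorem~\ref{thm:holow-sums-2} (the uniform refinement of Holowinsky's bound, valid even when the shift $dl$ is comparable to the summation length $dk$ and without any $\tau(dl)$ loss), one gets $\sum_{n \ll dk}\lambda_f(n)\lambda_f(n+dl) \ll_\eps (dk)^{1+\eps}\cdot (\text{saving})$, where the saving is essentially $\prod_{p \le dk}(1 + 2|\lambda_f(p)|/p)\big/\log(dk)^{2}$, i.e.\ roughly $dk\cdot M_f(dk)L(\ad f,1)/\log(dk)^\eps$ up to the $L(\ad f,1)$ normalization. Dividing by $\mu_f(1) \asymp q\,\Gamma(k-1)(4\pi)^{1-k}(k-1)L(\ad f,1)$ from \eqref{eq:2}, the $L(\ad f,1)$ factors cancel and one is left needing to sum the per-$d$ savings against the cusp-width weights. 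Here the divisor sum \eqref{eq:33} (Lemma~\ref{lem:stupid-sums}) enters: $\sum_{d\mid q} \frac{dk}{\log(dk)^{2-\eps}} \ll \frac{qk}{\log(qk)^{2-\eps}}\log\log(e^e q)$, and since $\log\log(e^e q) \ll \log(qk)^\eps$, the total is $\ll \log(qk)^\eps\, q\,k\, M_f(qk)L(\ad f,1)\big/\log(qk)^{\text{something}}$, which after the normalization gives the claimed $\log(qk)^\eps M_f(qk)$ bound (taking square roots where the Cauchy--Schwarz step in the sieve forces $M_f^{1/2}$). Monotonicity of $M_f$ — that $M_f(dk) \ll M_f(qk)\log(qk)^\eps$ for $d \mid q$ — is an elementary consequence of the Rankin--Selberg/Deligne bounds and \eqref{eq:hlghl} and lets one replace $M_f(dk)$ by $M_f(qk)$ uniformly.

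The main obstacle is the combination of items (2) and (3) from the introduction: getting a shifted-sum bound that is simultaneously (a) uniform in a shift $dl$ as large as the summation range, (b) free of the divisor-function factor $\tau(dl)$ that a naive application of Holowinsky's sieve would produce, and (c) strong enough — saving nearly $\log^2$ — so that after weighting by $d$ and summing over all $2^{\omega(q)}$ divisors via \eqref{eq:33} the savings are not swamped. The delicate point is that for small divisors $d$ the savings are tiny (we barely beat the trivial bound $dk$), so any slack — e.g.\ an extra $\log\log q$ from $\tau(dl)$ when $q$ has many small prime factors — would be fatal; this is exactly why Theorem~\ref{thm:holow-sums-2} is proved with full uniformity rather than cited off the shelf. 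A secondary technical nuisance is the bookkeeping for the many cusps: one must check that the archimedean weights arising from the Bessel functions $\kappa_{ir}$ and the $M(s)$ term genuinely confine $n$ to $n \ll dk$ for each width $d$ and that the implied constants depend only on $\phi$, not on $d$ or $q$; this I would handle by the same stationary-phase / rapid-decay estimates Holowinsky uses, applied after the $w_\sigma$-rescaling. The incomplete Eisenstein case additionally requires extracting the main term $\mu(\phi)/\mu(1)$ cleanly from $\phi_0(y)$ and showing the residual contour integral along $\Re(s) = 1/2$ is controlled by $R_f(qk)$, which is a direct manipulation using \eqref{eq:10} and the definition \eqref{eq:12}.
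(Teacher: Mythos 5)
Your overall strategy matches the paper's (unfold over the double cosets $\Gamma_\infty\backslash\Gamma/\Gamma_0(q)$ using Atkin--Lehner, reduce to the shifted sums \eqref{eq:30}, bound them by Theorem~\ref{thm:holow-sums-2}, and sum over $d\mid q$ via Lemma~\ref{lem:stupid-sums}), but there is a genuine gap in how you produce the exponent $1/2$ on $M_f(qk)$. You attribute it to ``the Cauchy--Schwarz step in the sieve,'' but no such step exists: Theorem~\ref{thm:holow-sums-2} delivers the product $\prod_{p\le x}(1+2|\lambda_f(p)|/p)$ to the \emph{first} power, and after dividing by $\mu_f(1)\asymp q\,\Gamma(k-1)(4\pi)^{1-k}kL(\ad f,1)$ and applying Lemma~\ref{lem:stupid-sums} one arrives at a bound of the shape $\log(qk)^{\eps}M_f(qk)$ per shifted sum --- first power, not square root. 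The square root in the paper comes from an ingredient your sketch omits entirely: the period is not unfolded directly but is first multiplied by a dilated incomplete Eisenstein series $E(h_Y,\cdot)$ depending on a free parameter $Y\ge 1$ (Lemma~\ref{lem:expand-int-e-of-h}). Discarding the continuous-spectrum contribution of $E(h_Y,\cdot)$ costs an error $O_\phi(Y^{-1/2})$ (after normalizing by $Y\mu_f(1)$), the sum over shifts $0<|l|<Y^{1+\eps}$ costs $Y^{1/2+2\eps}$ via Cauchy--Schwarz and Rankin--Selberg applied to $\lambda_\phi(l)$ (Lemma~\ref{lem:cspsrs}), and the final bound $Y^{1/2+2\eps}\log(qk)^{\eps}M_f(qk)+Y^{-1/2}$ is then optimized by choosing $Y=\max(1,M_f(qk)^{-1})$, which is what yields $M_f(qk)^{1/2}$. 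Without the parameter $Y$ and this balancing step your argument has no mechanism to produce the stated exponent, and the error terms ($Y^{-1/2}$ and $(1+R_f(qk))Y^{-1/2}$) that the theorem must absorb never appear.

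A secondary concern: you propose to ``write $Y_0(q)$ as a union of cuspidal neighbourhoods'' and unfold each piece to a full strip. As stated this does not work --- cuspidal neighbourhoods neither tile $Y_0(q)$ nor unfold cleanly, and you would have to control boundary contributions. The paper avoids this by inserting $E(h_Y,z)=\sum_{\gamma\in\Gamma_\infty\backslash\Gamma}h_Y(\Im(\gamma z))$, decomposing the $\Gamma$-set $\Gamma_\infty\backslash\Gamma$ into $\Gamma_0(q)$-orbits, and unfolding each orbit exactly; the widths $d_\sigma$ then enter through the change of variables $z\mapsto d_\sigma z$ after applying \eqref{eq:atkin-lehner}. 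The rest of your outline (uniformity of the shifted-sum bound in the shift, the role of \eqref{eq:33}, the extraction of $\mu(\phi)/\mu(1)$ and $R_f(qk)$ from the constant Fourier coefficient \eqref{eq:10}) is consistent with the paper.
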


In this section $k$ is a positive even integer, $f$ is a
normalized holomorphic newform of weight $k$ and squarefree
level $q$, and $\phi$ is a Maass eigencuspform or incomplete
Eisenstein series.  In \S \ref{sec:reduct-shift-sums} we reduce
Theorem \ref{thm:holow} to a problem of estimating shifted sums
(see Definition \ref{defn:shifted-sums}).  In \S
\ref{sec:analys-shift-sums} we apply a refinement of
\cite[Theorem 2]{MR2680498} to bound such
shifted sums.  In \S
\ref{sec:sums-shifted-sums} we complete the proof of Theorem
\ref{thm:holow}.

\subsection{Reduction to shifted
  sums}\label{sec:reduct-shift-sums}
Fix once and for all an everywhere nonnegative test
function $h \in C_c^\infty(\mathbb{R}_+^*)$
with Mellin transform
$h^\wedge(s) = \int_0^\infty h(y) y^{-s-1} \, d y$
such that $h^\wedge(1) = \mu(1)$.
In what follows, all implied constants
may depend upon $h$ without mention.
\begin{definition}\label{defn:shifted-sums}
  To the parameters $s \in \mathbb{C}$, $l \in \mathbb{Z}_{\neq 0}$
  and $x \geq 1$ we associate
  the \emph{shifted sums}
  \[
  S_s(l,x) = \sum
  _{\substack{
      n \in \mathbb{N} \\
      m := n + l \in \mathbb{N} 
    }}
  \frac{\lambda_f(m)}{\sqrt{m}}
  \frac{\lambda_f(n)}{\sqrt{n}}
  I_s(l,n,x),
  \]
  where $I_s(l,n,x)$ is an integral depending
  upon our fixed test function $h$:
  \[
  I_s(l,n,x)
  = \int _0 ^\infty h(x y) \kappa_s(l y)
  \kappa_f(m y) \kappa_f(n y) y^{-1} \, \frac{d y}{y},
  \quad m := n + l.
  \]
\end{definition}
Our aim in this section is to reduce Theorem \ref{thm:holow} to
the problem of bounding such shifted sums.  We shall
subsequently refer to the statement below of Proposition
\ref{prop:reduce-to-sums} but not the details of its proof.
\begin{proposition}\label{prop:reduce-to-sums}
  Let $Y \geq 1$.
  If $\phi$ is a Maass eigencuspform of eigenvalue
  $1/4 + r^2$, then
  \[
  \frac{\mu_f(\phi)}{\mu_f(1)}
  = \frac{1}{Y \mu_f(1)}
  \sum_{\substack{
      l \in \mathbb{Z}_{\neq 0} \\
      |l| < Y^{1+\eps}
    }
  }
  \frac{\lambda_\phi(l)}{ \sqrt{|l|}}
  \sum_{d|q} S_{i r}(d l, d Y)
  + O_{\phi,\eps}(Y^{-1/2}).
  \]
  If $\phi = E(\Psi,\cdot)$ is an incomplete Eisenstein series,
  then
  \[
  \begin{split}
    \frac{\mu_f(\phi)}{\mu_f(1)}
    - \frac{\mu(\phi)}{\mu(1)}
    & =
    \frac{1}{Y \mu_f(1)}
    \int _\mathbb{R} 
    \frac{\Psi^\wedge(\tfrac{1}{2}+it)}{\xi(1+2 i t)}
    \left(
      \sum_{\substack{
          l \in \mathbb{Z}_{\neq 0} \\
          |l| < Y^{1+\eps}
        }
      }
      \frac{\lambda_{i t}(l)}{\sqrt{|l|}}
      \sum_{d|q} S_{it}(d l, d Y)
    \right)
    \frac{d t}{2 \pi } \\
    & \quad + O_{\phi,\eps} \left( \frac{1 + R_f(q k)}{Y^{1/2}}
    \right).
  \end{split}
  \]
\end{proposition}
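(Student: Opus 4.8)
The plan is to extract both left-hand sides as the ``main term'' of a single auxiliary integral taken against a level-$1$ incomplete Eisenstein series, and then to evaluate that integral by unfolding over the cusps of $\Gamma_0(q)$; the genuinely new feature is the multi-cusp unfolding, from which the divisor sum $\sum_{d\mid q}$ and the scaling of the shift by $d$ will originate. For $Y\ge 1$ I put $h_Y(y):=h(Yy)$, so that $h_Y^\wedge(s)=Y^s h^\wedge(s)$ and in particular $h_Y^\wedge(1)=Y\mu(1)$ by the normalization $h^\wedge(1)=\mu(1)$. Since $E(h_Y,\cdot)$ is $\Gamma$-invariant, the product $\phi\,y^k|f|^2\,E(h_Y,\cdot)$ is $\Gamma_0(q)$-invariant and may be integrated over $Y_0(q)$. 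Applying \eqref{eq:1} to $\Psi=h_Y$ gives $E(h_Y,z)=Y+\int_{(1/2)}Y^s h^\wedge(s)E(s,z)\,\frac{ds}{2\pi i}$, whence
\[
\int_{\Gamma_0(q)\backslash\mathbb{H}}\phi\,y^k|f|^2\,E(h_Y,\cdot)\,d\mu
= Y\,\mu_f(\phi)+\mathcal{E},
\]
where $\mathcal{E}:=\int_{(1/2)}Y^s h^\wedge(s)\big(\int_{\Gamma_0(q)\backslash\mathbb{H}}\phi\,y^k|f|^2\,E(s,\cdot)\,d\mu\big)\frac{ds}{2\pi i}$ is to become the error term.

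\emph{Unfolding over the cusps.} Decomposing $\Gamma=\bigsqcup_\sigma\Gamma_\infty\sigma\Gamma_0(q)$ and unfolding the level-$1$ sum defining $E(h_Y,\cdot)$ against the $\Gamma_0(q)$-invariant integrand, I change variables by the scaling matrices $w_\sigma$. The key inputs are that $\phi$ is $\Gamma$-invariant, that the Atkin--Lehner relation \eqref{eq:atkin-lehner} makes $y^k|f|^2$ invariant under each $z\mapsto w_\sigma z$, and that the widths $d_\sigma$ run over the divisors $d\mid q$ exactly once. After the substitution one finds
\[
\int_{\Gamma_0(q)\backslash\mathbb{H}}\phi\,y^k|f|^2\,E(h_Y,\cdot)\,d\mu
=\sum_{d\mid q}\int_0^\infty\!\!\int_0^1 \phi(d z')\,y'^k|f(z')|^2\,h(dY y')\,\frac{dx'\,dy'}{y'^2},
\]
with $z'=x'+iy'$; the argument $d z'$ carries the shift-by-$d$ and the cut-off $h(dYy')$ sets the length scale $\approx dk$. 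Inserting the Fourier expansions \eqref{eq:f-fourier} and \eqref{eq:phi-fourier}, integrating in $x'$ to force $m-n=\pm dl$, and recognizing the resulting $y'$-integral as $I_{ir}(dl,n,dY)$ produces exactly $\sum_{d\mid q}\sum_{l\ne 0}\frac{\lambda_\phi(l)}{\sqrt{|l|}}S_{ir}(dl,dY)$, the evenness of $\phi$ fixing the sign of the shift.

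\emph{The two cases and the error terms.} For a Maass cusp form $\phi$ has no zeroth Fourier mode, so no diagonal term appears and combining the two displays yields the asserted identity with error $\mathcal{E}/(Y\mu_f(1))$. Since $|Y^s|=Y^{1/2}$ on $\Re(s)=1/2$, while $|\phi|$ decays rapidly and $E(s,\cdot)$ grows only moderately (so $|\phi\,E(s,\cdot)|\ll_\phi(1+|s|)^A$ on $\mathbb{H}$) and $\mu_f(1)=\int y^k|f|^2\,d\mu$, one obtains $|\int\phi\,y^k|f|^2E(s,\cdot)\,d\mu|\ll_\phi(1+|s|)^A\mu_f(1)$ uniformly in $q,k$; the rapid decay of $h^\wedge$ then gives $\mathcal{E}/(Y\mu_f(1))\ll_{\phi}Y^{-1/2}$. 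For an incomplete Eisenstein series I first expand $\phi=E(\Psi,\cdot)$ by \eqref{eq:1} into its constant part $\mu(\phi)/\mu(1)$ plus a contour integral of unitary Eisenstein series; the nonzero Fourier modes \eqref{eq:9} reproduce the $t$-integral of shifted sums with coefficients $\Psi^\wedge(\tfrac12+it)\lambda_{it}(l)/\xi(1+2it)$ and parameter $s=it$, the constant part paired with the diagonal $m=n$ rebuilds $\tfrac{\mu(\phi)}{\mu(1)}Y\mu_f(1)$ (the subtracted main term), and the remaining zeroth-mode contribution \eqref{eq:10} is a Rankin--Selberg integral against $\sum_n \lambda_f(n)^2 n^{-1}\kappa_f(ny')^2$ evaluating to $L(\ad f,\tfrac12+it)$; after division by $Y\mu_f(1)$ and use of \eqref{eq:2} this piece is controlled by the definition of $R_f(qk)$, producing the error $O_{\phi,\eps}((1+R_f(qk))/Y^{1/2})$.

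\emph{Truncation and the main difficulty.} Finally I truncate the $l$-sum to $|l|<Y^{1+\eps}$: on the support of $h(dYy')$ one has $y'\asymp(dY)^{-1}$, so the Bessel kernel $\kappa_{ir}(dly')$ (resp. $\kappa_{it}$) has argument $\asymp l/Y\gg Y^{\eps}$ once $|l|\ge Y^{1+\eps}$ and decays faster than any power; bounding the remaining $n$-sum by the Deligne bound $|\lambda_f(n)|\le\tau(n)$ shows the discarded tail is absorbed into the stated $O(Y^{-1/2})$ errors. I expect the main obstacle to be the multi-cusp unfolding of the second display: deciding to insert the level-$1$ series $E(h_Y,\cdot)$ in the first place, and then correctly tracking the Atkin--Lehner invariance together with the bijection between cusp widths and divisors of $q$. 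Securing the Eisenstein and Rankin--Selberg error bounds \emph{uniformly} in $q$ and $k$ — rather than merely for fixed level — is the other point demanding care.
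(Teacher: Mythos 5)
Your proposal is correct and takes essentially the same route as the paper's own proof: inserting the level-one incomplete Eisenstein series $E(h_Y,\cdot)$ and splitting off the critical-line contour term, unfolding over the double cosets $\Gamma_\infty \backslash \Gamma / \Gamma_0(q)$ via the scaling matrices, the Atkin--Lehner relation \eqref{eq:atkin-lehner}, and the width--divisor bijection (Lemma \ref{lem:expand-int-e-of-h}), extracting the shifted sums from the nonzero Fourier modes (Lemma \ref{lem:open-main-term}), recovering the main term and the $R_f$-controlled error from the zeroth mode paired with the Rankin--Selberg diagonal (Lemmas \ref{lem:3} and \ref{lem:2a}), and truncating $|l| \geq Y^{1+\eps}$ by Bessel decay (Lemma \ref{lem:2}). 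The only difference is bookkeeping in the Eisenstein case: the paper bounds the non-constant part of $\phi_0$ pointwise by $O_\phi(Y^{-1/2})$ and isolates the $L(\operatorname{ad} f, \tfrac12 + it)$ contribution in the exact evaluation $\mu_f(E(h_Y,\cdot)) = Y\mu_f(1)(1 + E_f(qY))$, whereas you fold both pieces into a single Rankin--Selberg computation; the content is the same.
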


Our proof follows a sequence of lemmas.
Let $k, f, q, Y, \phi, h$ be as above
and let $h_Y$ be the function
$y \mapsto h(Y y)$.
To $h_Y$ we attach the incomplete Eisenstein series
$E(h_Y,z)$ by the usual recipe (\ref{eq:3}).
\begin{lemma}\label{lem:expand-int-e-of-h}
  We have the following approximate formula
  for the quantity $\mu_f(\phi)$:
  \begin{align*}
    Y \mu_f(\phi) &= \sum_{d|q} \int _{y = 0}^\infty h_Y(d y)
    \int_{x=0}^1 \phi( d z) |f|^2(z) y^k \, \frac{d x \, d y }{
      y^2} + O_{\phi}(Y^{1/2} \mu_f(1)).
  \end{align*}
\end{lemma}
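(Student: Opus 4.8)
The plan is to compute $\mu_f(\phi)$ by inserting an approximate partition of unity built from the incomplete Eisenstein series $E(h_Y,\cdot)$, and then to unfold this Eisenstein series cusp-by-cusp against the $\Gamma_0(q)$-invariant function $y^k|f|^2$. The starting point is the identity \eqref{eq:1} applied to $h_Y$ in place of $\Psi$: since $h^\wedge(1) = \mu(1)$ we have $h_Y^\wedge(1) = Y^{-1} h^\wedge(1) = Y^{-1}\mu(1)$, so
\[
E(h_Y,z) = \frac{1}{Y} + \int_{(1/2)} h_Y^\wedge(s) E(s,z)\,\frac{ds}{2\pi i},
\]
and the integral term is $O(Y^{-1/2})$ pointwise (indeed uniformly on $Y_0(1)$, using $h_Y^\wedge(s) = Y^{-s}h^\wedge(s)$, the rapid decay of $h^\wedge$ on the line $\Re(s)=1/2$, and the polynomial growth of $E(s,z)$ in $|s|$). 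Multiplying through by $Y\phi(z)y^k|f|^2(z)$ and integrating over $Y_0(q)$ against $d\mu$ gives
\[
Y\mu_f(\phi) = \int_{\Gamma_0(q)\backslash\mathbb{H}} E(h_Y,z)\,\phi(z)\,y^k|f|^2(z)\,\frac{dx\,dy}{y^2} + O_\phi(Y^{1/2}\mu_f(1)),
\]
where the error term is controlled because $|\phi|$ is bounded, $E(h_Y,\cdot)$ contributes a factor $\asymp Y^{-1/2}$ from the contour integral, and $\int y^k|f|^2\,d\mu = \mu_f(1)$.

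Next I would unfold the remaining main term. Writing $E(h_Y,z) = \sum_{\gamma\in\Gamma_\infty\backslash\Gamma} h_Y(\Im(\gamma z))$ and using that $\phi\, y^k|f|^2$ is $\Gamma_0(q)$-invariant, the integral over $\Gamma_0(q)\backslash\mathbb{H}$ becomes an integral over $\Gamma_\infty\backslash\mathbb{H}$ — but one must be careful, because $\Gamma_\infty$ is a subgroup of $\Gamma$, not of $\Gamma_0(q)$. The correct bookkeeping is to decompose $\Gamma_\infty\backslash\Gamma/\Gamma_0(q)$ into the double cosets represented by the $\sigma$ of \S\ref{sec:holomorphic-newforms}, one for each cusp $\mathfrak{a}_\sigma = \sigma^{-1}\infty$ of width $d_\sigma$; as recalled there, the multiset $\{d_\sigma\}$ is exactly $\{d : d\mid q\}$. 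For each $\sigma$ the corresponding orbit sum unfolds the $Y_0(q)$-integral to an integral over $\Gamma_\infty\backslash(w_\sigma\mathbb{H})$, i.e., after the change of variables $z = w_\sigma^{-1}z'$ (with $w_\sigma$ the scaling matrix), to $\int_{y'=0}^\infty\int_{x'=0}^1$. Under $z = w_\sigma^{-1}z'$ one has $\Im(z) = \Im(z')/d_\sigma$ (since $w_\sigma^{-1} = \left(\begin{smallmatrix}1/d_\sigma & \\ & 1\end{smallmatrix}\right)$ up to the element $\sigma$ of $\Gamma$, which fixes the measure and the invariant function), the measure $d\mu$ is preserved, $\phi(z)$ becomes $\phi(w_\sigma^{-1}z') = \phi(d_\sigma^{-1}z')$ up to the $\Gamma$-invariance — wait, I should instead push the $d_\sigma$ onto the other factor; relabeling, the upshot is a term $\int_0^\infty h_Y(d\,y)\int_0^1 \phi(d z)\,y^k|f|^2(z)\,d\mu$ for each divisor $d = d_\sigma\mid q$, where I have used the Atkin–Lehner relation \eqref{eq:atkin-lehner}, $f|_k w_\sigma = \pm f$, to see that $y^k|f|^2$ is invariant under $z\mapsto w_\sigma z$ so that the unfolded integrand is literally $y^k|f(z)|^2$ in the new coordinate. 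Summing over the cusps and using that the widths run over the divisors of $q$ exactly once each yields the claimed formula.

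The main obstacle I anticipate is the careful handling of the unfolding in the presence of multiple cusps: one must verify that the $\Gamma_\infty$-quotient on the automorphic side matches the fundamental domain $\{0 < x < 1,\ y > 0\}$ after scaling by $w_\sigma$, that the factor $d_\sigma$ enters the argument of $h_Y$ (reflecting the width) rather than somewhere else, and — crucially — that the Atkin–Lehner invariance \eqref{eq:atkin-lehner} is exactly what makes $y^k|f|^2$ transform correctly under $w_\sigma$ so that no stray factors of $\pm1$ or powers of $d$ survive (the sign squares away, and the weight-$k$ automorphy factor has absolute value accounted for by the $y^k$). Once this cusp-by-cusp dictionary is set up — essentially the content of \cite[\S 2.4]{MR1942691} combined with \eqref{eq:atkin-lehner} — the identification of $\sum_\sigma$ with $\sum_{d\mid q}$ is immediate and the lemma follows.
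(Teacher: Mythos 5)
Your overall strategy is exactly the paper's: replace $Y\mu_f(\phi)$ by $\mu_f(E(h_Y,\cdot)\phi)$ up to an admissible error via the Mellin/contour decomposition \eqref{eq:1}, then unfold $E(h_Y,\cdot)$ against the $\Gamma_0(q)$-invariant density $\phi\,|f|^2y^k$ cusp by cusp, using the double coset decomposition of $\Gamma_\infty\backslash\Gamma/\Gamma_0(q)$, the identification of the multiset of widths with the divisors of $q$, and the Atkin--Lehner relation \eqref{eq:atkin-lehner} to remove the scaling matrix from $f$. The unfolding half of your argument, modulo the self-acknowledged relabeling, is the paper's proof.

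The first half as written, however, contains a scaling error that, taken literally, destroys the error analysis. Since $h_Y(y)=h(Yy)$ and $\Psi^\wedge(s)=\int_0^\infty\Psi(y)y^{-s-1}\,dy$, one has $h_Y^\wedge(s)=Y^{s}h^\wedge(s)$, not $Y^{-s}h^\wedge(s)$. Hence the constant term of $E(h_Y,\cdot)$ is $h_Y^\wedge(1)/\mu(1)=Y$, not $1/Y$, and the contour contribution is of size $Y^{1/2}$, not $Y^{-1/2}$. With your normalization the ``main term'' $1/Y$ would be swamped by the ``error'' $O(Y^{-1/2})$ and the decomposition would prove nothing; with the correct one you get $Y\mu_f(\phi)=\mu_f(E(h_Y,\cdot)\phi)-\int_{(1/2)}h^\wedge(s)Y^s\mu_f(E(s,\cdot)\phi)\,\frac{ds}{2\pi i}$, and the subtracted term is $O_\phi(Y^{1/2}\mu_f(1))$, which is the identity you in fact go on to use and is what the paper proves. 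A second, smaller imprecision: the contour integral is not uniformly bounded on all of $Y_0(1)$, because the constant term $y^s+M(s)y^{1-s}$ of $E(s,z)$ grows like $y^{1/2}$ in the cusp; as in the paper one must first pair with $\phi$ and use its decay at the cusp to get $E(s,z)\phi(z)\ll_\phi|s|^{O(1)}$ on a Siegel domain, then integrate against $\mu_f$ and finally over $s$ using the rapid decay of $h^\wedge$. Both points are readily repaired and do not affect the architecture of the proof.
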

\begin{proof}
  By Mellin inversion and Cauchy's theorem as in (\ref{eq:1}),
  we have
  \begin{equation*}
    Y
    \mu_f(\phi)
    = \mu_f(E(h_Y,\cdot) \phi)
    - \int_{(1/2)}
    h^\wedge(s) Y^s \mu_f(E(s,\cdot)\phi) \, \frac{d s}{2 \pi i}.
  \end{equation*}
  The argument of \cite[Proof of Lemma
  3.1a]{MR2680498} shows without modification that
  \begin{equation}\label{eq:29}
    \int_{(1/2)}
    h^\wedge(s) Y^s \mu_f(E(s,\cdot)\phi) \, \frac{d s}{2 \pi
      i}  \ll_{\phi} Y^{1/2} \mu_f(1);
  \end{equation}
  since the proof is short, we sketch it here.  By the Fourier
  expansion for $E(s,z)$ and the rapid decay of $\phi(z)$ as $y
  \rightarrow \infty$, we have $E(s,z) \phi(z) \ll_\phi
  |s|^{O(1)}$ for $\Re(s) = 1/2$ and $z$ in the Siegel domain
  $\{z : x \in [0,1], y > 1/2 \}$ for $\Gamma \backslash
  \mathbb{H}$. By the rapid decay of $h^\wedge$ we have
  $h^\wedge(s) Y^s E(s,z) \phi(z) \ll_\phi Y^{1/2} |s|^{-2}$
  for $s,z$ as above; the estimate \eqref{eq:29}
  follows by integrating in $z$ against $\mu_f$
  and then integrating in $s$.

  Having established that $Y \mu_f(\phi) = \mu_f(E(h_Y,\cdot)
  \phi) + O_\phi(Y^{1/2} \mu_f(1))$, it remains now only to
  evaluate $\mu_f(E(h_Y,\cdot)\phi)$.
  Let $\Gamma_\infty \backslash
  \Gamma / \Gamma_0(q) = \{\sigma \}$ be a set of double-coset
  representatives as in \S \ref{sec:holomorphic-newforms},
  and set
  \[
  d_\sigma = [\Gamma_\infty : \Gamma_\infty \cap
  \sigma \Gamma_0(q) \sigma^{-1}]
  .
  \]
  By decomposing the transitive right $\Gamma$-set
  $\Gamma_\infty \backslash \Gamma $ into $\Gamma_0(q)$-orbits
  \[
  \Gamma _\infty \backslash \Gamma
  = \sqcup \Gamma_\infty \backslash \Gamma_\infty \sigma
  \Gamma_0(q)
  = \sqcup  \sigma (
  \sigma^{-1} \Gamma_\infty \sigma \cap \Gamma_0(q) \backslash
  \Gamma_0(q) ),
  \]
  we obtain
  \[
  E(h_Y,z) = \mathop{\sum \sum}_{
    \substack{
      \sigma \in \Gamma_\infty \backslash \Gamma /
      \Gamma_0(q) \\
      \gamma \in \sigma^{-1} \Gamma_\infty
      \sigma \cap \Gamma_0(q) \backslash \Gamma_0(q)
    }
  }
  h_Y(\Im (\sigma \gamma z)).
  \]
  By invoking
  the $\Gamma_0(q)$-invariance of
  $z \mapsto \phi(z) |f|^2(z) y^k \, \frac{d x \, d y}{y^2}$
  and unfolding the sum over $\gamma \in \sigma^{-1}
  \Gamma_\infty \sigma \cap \Gamma_0(q) \backslash \Gamma_0(q)$
  with the integral over $z \in \Gamma_0(q) \backslash
  \mathbb{H}$,
  we get
  \[
  \mu_f(E(h_Y,\cdot)\phi)
  =
  \sum_{\sigma \in \Gamma_\infty \backslash \Gamma /
    \Gamma_0(q)}
  \int_{\sigma^{-1} \Gamma_\infty  \sigma \cap \Gamma_0(q)
    \backslash \mathbb{H}}
  h_Y(\Im (\sigma z)) \phi(z) |f|^2(z) y^k \, \frac{d x \, d
    y}{y^2}.
  \]
  The change of variables
  $z \mapsto \sigma^{-1} z$
  transforms the integral above into
  \[
  \int_{ \Gamma_\infty \cap \sigma \Gamma_0(q) \sigma^{-1}
    \backslash \mathbb{H}}
  h_Y( y) \phi( z)
  |f|^2(\sigma^{-1} z)
  \Im(\sigma^{-1} z)^k
  \, \frac{d x \, d
    y}{y^2}.
  \]
  Integrating over a fundamental domain for $\Gamma_\infty \cap
  \sigma \Gamma_0(q) \sigma^{-1} = \{\pm \left(
    \begin{smallmatrix}
      1& d_\sigma n\\
      &1
    \end{smallmatrix}
  \right) : n \in \mathbb{Z} \}$ acting on
  $\mathbb{H}$, we get
  \[
  \int_{ y=0}^\infty
  h_Y( y)
  \int_{x=0}^{d_\sigma}
  \phi( z)
  |f|^2(\sigma^{-1} z)
  \Im(\sigma^{-1} z)^k
  \, \frac{d x \, d
    y}{y^2}.
  \]
  Applying now the change of variables
  $z \mapsto d_\sigma z$ gives
  \[
  \int_{ y=0}^\infty 
  h_Y(d_\sigma y)
  \int_{x=0}^1
  \phi(d_\sigma z)
  \left\lvert f |_k \sigma^{-1} \left(
      \begin{smallmatrix}
        d_\sigma &\\
        &1
      \end{smallmatrix}
    \right) \right\rvert^2 (z)
  y^k
  \, \frac{d x \, d
    y}{y^2}.
  \]
  Since $f |_k \sigma^{-1} \left(
    \begin{smallmatrix}
      d_\sigma &\\
      &1
    \end{smallmatrix} \right) = \pm f$ by the consequence
  (\ref{eq:atkin-lehner}) of Atkin-Lehner theory
  (using here that $q$ is squarefree), we find that
  \[
  \mu_f(E(h_Y,\cdot)\phi)
  = 
  \sum_{\sigma \in \Gamma_\infty \backslash \Gamma /
    \Gamma_0(q)}
  \int_{ y=0}^\infty 
  h_Y(d_\sigma y)
  \int_{x=0}^1
  \phi(d_\sigma z)
  |f|^2(z)
  y^k
  \, \frac{d x \, d
    y}{y^2}.
  \]
  Since $\{d _\sigma \} = \{ d : d | q\}$, we obtain the
  claimed formula.
\end{proof}

In the expression for $Y \mu_f(\phi)$
given by Lemma \ref{lem:expand-int-e-of-h},
we expand $\phi$ in a Fourier series
$\phi(z) = \sum_{l \in \mathbb{Z}} \phi_l(y) e(l x)$
and consider
separately the contributions
from $l$ in various ranges; specifically, we set
\begin{align*}
  \mathcal{S}_0 &= 
  \sum_{d|q}
  \int_{y=0}^\infty h_Y(d y)
  \int_{x=0}^1 \phi_0(d y) |f|^2(z) y^k
  \, \frac{d x \, d y}{y^2}, \\
  \mathcal{S}_{(0,Y^{1+\eps})} &= 
  \sum_{d|q}
  \int_{y=0}^\infty h_Y(d y)
  \int_{x=0}^1
  \sum_{
    0 < |l| < Y^{1+\eps}
  }
  \phi_l(d y)
  |f|^2(z) y^k
  \, \frac{d x \, d y}{y^2}, \\
  \mathcal{S}_{\geq Y^{1+\eps}} &= 
  \sum_{d|q}
  \int_{y=0}^\infty h_Y(d y)
  \int_{x=0}^1
  \sum_{|l| \geq Y^{1+\eps}}
  \phi_l(d y)
  |f|^2(z) y^k
  \, \frac{d x \, d y}{y^2},
\end{align*}
so that
\begin{equation}\label{eq:4}
  \sum_{d|q}
  \int_{y=0}^\infty h_Y(d y)
  \int_{x=0}^1 \phi(d z) |f|^2(z) y^k
  \, \frac{d x \, d y}{y^2}
  =
  \mathcal{S}_0
  +
  \mathcal{S}_{(0,Y^{1+\eps})}
  +
  \mathcal{S}_{\geq Y^{1+\eps}}.
\end{equation}
We treat these contributions in Lemmas \ref{lem:2a},
\ref{lem:open-main-term} and \ref{lem:2}, respectively; in
doing so we shall repeatedly use the following technical
result.
\begin{lemma}\label{lem:3}
  The quantity $\mu_f(E(h_Y,\cdot))$
  satisfies the
  formulas and estimates
  \begin{eqnarray*}
    \mu_f(E(h_Y,\cdot))
    &=&
    \sum_{d|q}
    \int _{y = 0}^\infty h_Y(d y)
    \int_{x=0}^1 |f|^2(z) y^k \, \frac{d x \, d y }{ y^2} \\
    &=&
    Y \mu_f(1)
    \left( 1 +
      E_f(q Y)
    \right)
    \\
    &=& Y \mu_f(1) \left( 1
      + O \left( Y^{-1/2} R_f(q k) \right)
    \right),
  \end{eqnarray*}
  where
  \[
  E_f(x)
  :=
  \frac{2 \pi^2}{x}
  \int_{(1/2)}
  h^\wedge(s) 
  \left(
    \frac{x}{4 \pi}
  \right)^s
  \frac{\Gamma(s+k-1)}{\Gamma(k)}
  \frac{\zeta(s)}{\zeta(2 s)} 
  \frac{L(\ad f, s)}{L(\ad f, 1)}
  \,
  \frac{
    d s}{2 \pi i}.
  \]
  Moreover, $\mu_f(E(h_Y,\cdot))
  \ll Y \mu_f(1)$.
\end{lemma}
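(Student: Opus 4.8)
The plan is to establish the three displayed equalities in turn and then read off the final bound. The first equality requires no new work: it is exactly the unfolding identity established midway through the proof of Lemma~\ref{lem:expand-int-e-of-h}, specialized to $\phi\equiv1$; no contour-shift error arises here because we evaluate $\mu_f(E(h_Y,\cdot))$ directly rather than relating it to $Y\mu_f(\phi)$ as in \eqref{eq:1}. Thus I may take as given that
\[
\mu_f(E(h_Y,\cdot)) = \sum_{d|q}\int_{y=0}^\infty h_Y(dy)\int_{x=0}^1 |f|^2(z)\,y^k\,\frac{dx\,dy}{y^2}.
\]
To evaluate the inner $x$-integral I would insert the Fourier expansion \eqref{eq:f-fourier}, so that $y^k|f(z)|^2=|y^{k/2}f(z)|^2$ becomes a double sum over Fourier modes; integrating over $x\in[0,1]$ kills the off-diagonal and leaves the diagonal $\sum_n(\lambda_f(n)^2/n)\kappa_f(ny)^2$ with $\kappa_f(ny)^2=(ny)^k e^{-4\pi ny}$. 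Feeding in the Mellin inversion $h_Y(dy)=\int_{(2)}h^\wedge(s)(dYy)^s\,\frac{ds}{2\pi i}$, interchanging orders, and performing the $y$-integral as a Gamma function $\int_0^\infty y^{s+k-2}e^{-4\pi ny}\,dy=\Gamma(s+k-1)(4\pi n)^{1-s-k}$, the $n$-sum collapses to the Rankin--Selberg Dirichlet series $D_f(s):=\sum_n\lambda_f(n)^2 n^{-s}$, giving
\[
\mu_f(E(h_Y,\cdot)) = \int_{(2)} h^\wedge(s)\,Y^s\Big(\textstyle\sum_{d|q}d^s\Big)\frac{\Gamma(s+k-1)}{(4\pi)^{s+k-1}}\,D_f(s)\,\frac{ds}{2\pi i}.
\]

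The conceptual heart of the argument, and what I expect to be the main obstacle, is the arithmetic identity
\[
\Big(\sum_{d|q}d^s\Big)\,D_f(s) = q^s\,\frac{\zeta(s)}{\zeta(2s)}\,L(\ad f,s),
\]
which reconciles the cusp/width sum with the completed Rankin--Selberg factorization and produces the power $q^s$ matching the variable $x=qY$ in the definition \eqref{eq:12} of $E_f$. Since $q$ is squarefree both sides are multiplicative, so I would verify it prime by prime: at $p\nmid q$ the divisor sum contributes nothing and one recovers the classical good-prime identity $\sum_j\lambda_f(p^j)^2 p^{-js}=\zeta_p(s)\zeta_p(2s)^{-1}L_p(\ad f,s)$, while at $p\mid q$ the newform is special, so $\lambda_f(p^j)^2=p^{-j}$ gives $D_f$ the local factor $(1-p^{-1-s})^{-1}$; here the divisor-sum factor $1+p^s$ and the power $p^s$ conspire precisely with the Steinberg adjoint factor $L_p(\ad f,s)=(1-p^{-1-s})^{-1}$ to close the identity. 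Getting the bad-prime bookkeeping and the normalizing constant exactly right is the delicate point; squarefreeness and the Atkin--Lehner relation \eqref{eq:atkin-lehner} used in the unfolding are both essential.

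With the identity in hand I would shift the contour from $\Re(s)=2$ to $\Re(s)=1/2$, justified by the rapid decay of $h^\wedge$ together with the at-most-polynomial growth of $\zeta(s)/\zeta(2s)$ and of $L(\ad f,s)$ (entire by Gelbart--Jacquet, nonvanishing at $s=1$ by Hoffstein--Lockhart). The only pole crossed is the simple pole of $\zeta(s)$ at $s=1$; its residue is $Y$ times the Rankin--Selberg evaluation, and comparing with \eqref{eq:2} (using $h^\wedge(1)=\mu(1)$ and $\res_{s=1}\zeta=1$) shows it equals exactly $Y\mu_f(1)$, the main term. The remaining integral on $\Re(s)=1/2$ is, after factoring out $Y\mu_f(1)$ via \eqref{eq:2}, precisely $Y\mu_f(1)E_f(qY)$, yielding the second equality.

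For the third equality I would bound $E_f(qY)$ on the line $\Re(s)=1/2$. The key uniform estimate is $|\Gamma(k-\tfrac12+it)/\Gamma(k)|\le\Gamma(k-\tfrac12)/\Gamma(k)\ll k^{-1/2}$, using $|\Gamma(\sigma+it)|\le\Gamma(\sigma)$ and Stirling; this supplies the factor $k^{-1/2}$, while the prefactor $(qY)^{-1/2}$ coming from $(x/4\pi)^s/x$ supplies $q^{-1/2}Y^{-1/2}$. The rapid decay of $h^\wedge$ absorbs the polynomially growing ratio $\zeta(\tfrac12+it)/\zeta(1+2it)$ (bounded below away from $t=0$ by the prime number theorem, and tending to $0$ at $t=0$) into the weight $(1+|t|)^{-10}$, so that what remains is exactly $(qk)^{-1/2}L(\ad f,1)^{-1}\int_\mathbb{R}|L(\ad f,\tfrac12+it)|(1+|t|)^{-10}\,dt=R_f(qk)$ up to constants; combining the factors gives $E_f(qY)\ll Y^{-1/2}R_f(qk)$. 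Finally, the \emph{moreover} bound $\mu_f(E(h_Y,\cdot))\ll Y\mu_f(1)$ follows from the second equality together with $E_f(qY)\ll1$, the latter obtained from the same contour estimate now invoking the convexity bound for $L(\ad f,\tfrac12+it)$ and \eqref{eq:hlghl} to control $R_f(qk)$ in the range of $Y$ relevant to the application; positivity of the incomplete Eisenstein series $E(h_Y,\cdot)$ fixes the sign.
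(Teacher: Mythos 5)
Your treatment of the three displayed equalities is correct and is essentially the computation the paper intends: the first equality is the unfolding identity of Lemma~\ref{lem:expand-int-e-of-h} with $\phi \equiv 1$ (where, as you note, no contour shift and hence no error term arises); the second is the Mellin/Rankin--Selberg computation, and your identity $\bigl(\sum_{d|q} d^s\bigr) D_f(s) = q^s \zeta(s)\zeta(2s)^{-1} L(\ad f,s)$ for squarefree $q$ (Steinberg local factors $\lambda_f(p^j)^2 = p^{-j}$ at $p \mid q$) is exactly the arithmetic input needed, with the residue at $s=1$ matching $Y\mu_f(1)$ via \eqref{eq:2} and $h^\wedge(1)=\mu(1)$; the third uses precisely the bounds the paper quotes, namely $|\Gamma(k-\tfrac12+it)| \leq \Gamma(k-\tfrac12) \ll k^{-1/2}\Gamma(k)$, $\zeta(\tfrac12+it) \ll (1+|t|)^{1/4}$, and $|\zeta(1+2it)|^{-1} \ll \log(2+|t|)$.

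The genuine gap is in the \emph{moreover} assertion $\mu_f(E(h_Y,\cdot)) \ll Y\mu_f(1)$. You propose to deduce it from $E_f(qY) \ll 1$, invoking convexity for $L(\ad f,\tfrac12+it)$ and \eqref{eq:hlghl} ``in the range of $Y$ relevant to the application.'' This cannot work. Your own estimate gives only $E_f(qY) \ll Y^{-1/2} R_f(qk)$, hence $\mu_f(E(h_Y,\cdot)) \ll Y \mu_f(1)\bigl(1 + R_f(qk)\bigr)$, and no unconditional argument makes $R_f(qk) \ll 1$: convexity (analytic conductor of $\ad f$ being $\asymp q^2k^2(1+|t|)^3$) together with \eqref{eq:hlghl} yields only $R_f(qk) \ll_\eps (qk)^{\eps}$, and even Soundararajan's weak subconvexity \cite{MR2680497} --- which the paper deliberately keeps out of \S\ref{sec:main-estimates} --- yields only $\log(qk)^{\eps}$. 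Nor may you restrict $Y$: the bound is used in Lemma~\ref{lem:2} and Proposition~\ref{prop:reduce-to-sums} for every $Y \geq 1$, and the eventual choice in the proof of Theorem~\ref{thm:holow} is $Y = \max(1, M_f(qk)^{-1})$, which equals $1$ whenever $M_f(qk) \geq 1$ while $qk \to \infty$; in that regime $E_f(qY) \ll 1$ is essentially a Lindel\"{o}f-quality statement. (Your remark that positivity ``fixes the sign'' does no work here: positivity of $\mu_f(E(h_Y,\cdot))$ is automatic, and the issue is the upper bound.) The paper's proof of this bound is entirely different and soft: after unfolding, $\mu_f(E(h_Y,\cdot))$ is majorized by the mass that the ($\Gamma$-invariantly lifted) measure $\mu_f$ assigns to the region of $\Gamma_\infty \backslash \mathbb{H}$ where $h_Y(\Im z) \neq 0$, i.e.\ $\Im z \asymp 1/Y$; that region meets $\ll Y$ translates of a fundamental domain for $\Gamma \backslash \mathbb{H}$ \cite[Lemma 2.10]{MR1942691}, so its mass is $\ll Y \mu_f(1)$, unconditionally and uniformly in $Y$. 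You should replace your last step with this geometric argument (or any argument not routed through $E_f$ and $R_f$).
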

\begin{proof}
  The first equality
  follows from the same argument as in the proof
  of Lemma \ref{lem:expand-int-e-of-h},
  the second from the Mellin formula and the
  unfolding method by a direct computation, the third from the
  bounds $|\Gamma(k - 1/2 + i t)| \leq \Gamma(k -
  1/2)| \ll k^{-1/2} \Gamma(k)$, $\zeta(1/2+it) \ll (1 + |t|)^{1/4}$
  and $|\zeta(1 + 2 i t)| \gg 1 / \log(1+|t|)$ as in
  \cite[p.7]{MR2680497}.
  Finally, because the quantity $\mu_f(E(h_Y,\cdot))$
  is majorized by the integral of the $\Gamma$-invariant measure
  $\mu_f$ over the region on which
  the function $\Gamma_\infty \backslash \mathbb{H} \ni z \mapsto
  h_Y(y)$ does not vanish and because that region
  intersects $\ll Y$ fundamental domains for $\Gamma
  \backslash \mathbb{H}$ \cite[Lemma 2.10]{MR1942691},
  we have $\mu_f(E(h_Y,\cdot)) \ll Y \mu_f(1)$.
\end{proof}

\begin{lemma}[The main term $\mathcal{S}_0$]\label{lem:2a}
  If $\phi$ is a Maass eigencuspform, then 
  $\phi_0(y) = 0$ and $\mathcal{S}_0 =0$.  If $\phi$ is an
  incomplete Eisenstein series, then
  \[
  \mathcal{S}_0
  = Y \mu_f(1)
  \left( \frac{\mu(\phi)}{\mu(1)} + O_\phi \left( \frac{1+R_f(q k)}{Y^{1/2}} \right) \right).
  \]
\end{lemma}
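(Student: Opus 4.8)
The plan is to split $\mathcal{S}_0$ according to the Fourier expansion of the constant term $\phi_0$ and then to route both resulting pieces through Lemma~\ref{lem:3}. When $\phi$ is a Maass eigencuspform this is immediate: its Fourier expansion \eqref{eq:phi-fourier} carries no frequency-zero term, so $\phi_0 \equiv 0$, and plugging this into the definition of $\mathcal{S}_0$ gives $\mathcal{S}_0 = 0$ on the nose. The substance is therefore entirely in the incomplete Eisenstein case $\phi = E(\Psi,\cdot)$.

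For that case I would begin from \eqref{eq:10}, which writes $\phi_0(y) = \mu(\phi)/\mu(1) + \rho(y)$ with $\rho(y) := \int_{(1/2)} \Psi^\wedge(s)(y^s + M(s)y^{1-s})\,\tfrac{ds}{2\pi i}$, and record the pointwise bound $|\rho(y)| \ll_\Psi y^{1/2}$ valid for all $y > 0$; this follows because $|y^s| = |y^{1-s}| = y^{1/2}$ and $|M(s)| = 1$ on the line $\Re(s) = 1/2$ (the latter noted just after \eqref{eq:24}), together with the rapid decay $|\Psi^\wedge(s)| \ll_{\Psi,A}(1+|s|)^{-A}$. Splitting $\mathcal{S}_0$ accordingly and invoking the first equality of Lemma~\ref{lem:3} to recognize the constant contribution as a multiple of $\mu_f(E(h_Y,\cdot))$, one arrives at
\[
\mathcal{S}_0 = \frac{\mu(\phi)}{\mu(1)}\,\mu_f\!\left(E(h_Y,\cdot)\right) + \sum_{d \mid q}\int_0^\infty h_Y(dy)\int_0^1 \rho(dy)\,|f|^2(z)\,y^k\,\frac{dx\,dy}{y^2}.
\]

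The first term is handled by Lemma~\ref{lem:3}, which gives $\mu_f(E(h_Y,\cdot)) = Y\mu_f(1)(1 + O(Y^{-1/2}R_f(qk)))$, so it contributes $Y\mu_f(1)\bigl(\mu(\phi)/\mu(1) + O_\phi(Y^{-1/2}R_f(qk))\bigr)$ — this is where the $R_f(qk)$ in the claimed error term comes from. For the second term, the key observation is that $y \mapsto h_Y(dy)$ vanishes unless $dy \ll Y^{-1}$, so $|\rho(dy)| \ll_\Psi (dy)^{1/2} \ll_\Psi Y^{-1/2}$ throughout its support; since $h_Y \geq 0$ and $\int_0^1 |f|^2 y^k\,dx \geq 0$, this pointwise bound may be pulled out of the integral and the $d$-sum, leaving $\ll_\Psi Y^{-1/2}\mu_f(E(h_Y,\cdot)) \ll Y^{1/2}\mu_f(1)$ by the final bound $\mu_f(E(h_Y,\cdot)) \ll Y\mu_f(1)$ of Lemma~\ref{lem:3}. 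Adding the two estimates and absorbing $\mu(\phi)/\mu(1)$ into the implied constant then gives exactly $\mathcal{S}_0 = Y\mu_f(1)\bigl(\mu(\phi)/\mu(1) + O_\phi((1+R_f(qk))/Y^{1/2})\bigr)$.

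I do not expect a serious obstacle here; the only thing to get right is the bookkeeping that lets the entire error be absorbed into Lemma~\ref{lem:3}. The one temptation to resist is to estimate the second term by unfolding the $x$-integral and performing a Rankin--Selberg analysis of each frequency-zero contribution: that would reintroduce the ratio $L(\ad f,s)/L(\ad f,1)$ on the critical line together with a divisor sum $\sum_{d\mid q} d^{s}$, and while that route can also be made to work it is strictly more elaborate than the crude pointwise bound $|\rho(dy)| \ll_\Psi Y^{-1/2}$ on the support of $h_Y(d\,\cdot)$, which already suffices.
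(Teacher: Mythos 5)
Your argument is correct and is essentially the paper's own proof: the paper likewise observes that \eqref{eq:10} gives $\phi_0(y)=\mu(\phi)/\mu(1)+O_\phi(Y^{-1/2})$ on the support of $h_Y$, factors this out using the nonnegativity of the remaining integrand, and then applies Lemma~\ref{lem:3} twice (once for the asymptotic $Y\mu_f(1)(1+O(Y^{-1/2}R_f(qk)))$ and once for the crude bound $\mu_f(E(h_Y,\cdot))\ll Y\mu_f(1)$). Your version merely makes the splitting into the constant term and the remainder $\rho$ explicit, which is a presentational rather than a mathematical difference.
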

\begin{proof}
  If $\phi$ is a Maass eigencuspform
  then $\phi_0(y) = 0$ holds by definition,
  hence $\mathcal{S}_0 = 0$.  Suppose
  otherwise that $\phi$ is an incomplete Eisenstein series.  It
  follows from (\ref{eq:10}) that for every $y \in \mathbb{R}_+^*$
  such that $h_Y(y) \neq 0$, we have $\phi_0(y) = \mu(\phi)/
  \mu(1) + O_\phi(Y^{-1/2})$.  Thus two applications of Lemma
  \ref{lem:3} show that
  \begin{eqnarray*}
    \mathcal{S}_0
    &=& \mu_f(E(h_Y,\cdot)) \left( \frac{\mu(\phi)}{\mu(1)} +
      O_\phi (Y^{-1/2}) \right) \\
    &=& Y \mu_f(1)
    \left( 1 + O \left( \frac{R_f(q k)}{Y^{1/2}} \right) \right)
    \left( \frac{\mu(\phi)}{\mu(1)}
      + O_\phi (Y^{-1/2}) \right) \\
    &=& Y \mu_f(1)
    \left( \frac{\mu(\phi)}{\mu(1)} + O_\phi \left(  \frac{1 + R_f(q k)}{Y^{1/2}} \right) \right).
  \end{eqnarray*}
\end{proof}

\begin{lemma}[The essential
  error term $\mathcal{S}_{(0,Y^{1+\eps})}$]\label{lem:open-main-term}
  If $\phi$ is a Maass eigencuspform, then
  \[
  \mathcal{S}_{(0, Y^{1+\eps})}
  =
  \sum_{
    0 < |l| < Y^{1+\eps}
  }
  \frac{\lambda_\phi(l)}{\sqrt{|l|}}
  \sum_{d|q}
  S_{i r}( d l, d Y).
  \]
  If $\phi$ is an incomplete Eisenstein series,
  then
  \[
  \mathcal{S}_{(0, Y^{1+\eps})}
  =
  \int_\mathbb{R} 
  \frac{\Psi^\wedge(\tfrac{1}{2}+it)}{\xi(1+2it)}
  \sum_{
    0 < |l| < Y^{1+\eps}
  }
  \frac{\lambda_{i t}(l)}{\sqrt{|l|}}
  \sum_{d|q}
  S_{i t}( d l, d Y) \, \frac{d t }{ 2 \pi }.
  \]
\end{lemma}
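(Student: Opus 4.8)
The plan is to prove both identities by direct substitution, starting from the definition of $\mathcal{S}_{(0,Y^{1+\eps})}$: expand $\phi$ and $|f|^2 y^k$ in Fourier series, perform the inner integral over $x\in[0,1]$ by orthogonality of additive characters, and identify the surviving $y$-integral with a shifted sum in the sense of Definition \ref{defn:shifted-sums}. The computation is essentially formal; the care needed is in justifying the interchanges of sum and integral and in reconciling the shift parameter and sign conventions with that definition.

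First I would record the two Fourier expansions involved. From \eqref{eq:f-fourier} and the reality of $\lambda_f$ and $\kappa_f$,
\[
|f|^2(z)\,y^k = \sum_{m,n\in\mathbb{N}} \frac{\lambda_f(m)\lambda_f(n)}{\sqrt{mn}}\,\kappa_f(my)\kappa_f(ny)\,e\bigl((m-n)x\bigr),
\]
the series converging absolutely and locally uniformly because $\kappa_f(ny) = (ny)^{k/2}e^{-2\pi ny}$ decays rapidly in $n$ for $y$ bounded away from $0$. Writing $\phi(dz) = \sum_{l\in\mathbb{Z}}\phi_l(dy)\,e\bigl((dl)x\bigr)$, one has $\phi_l(y) = \lambda_\phi(l)|l|^{-1/2}\kappa_{ir}(ly)$ for $l\neq 0$ in the Maass case, and, by \eqref{eq:9}, $\phi_l(y) = \int_{\mathbb{R}} \Psi^\wedge(\tfrac12+it)\,\xi(1+2it)^{-1}\,\lambda_{it}(l)|l|^{-1/2}\,\kappa_{it}(ly)\,\tfrac{dt}{2\pi}$ in the incomplete Eisenstein case.

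Substituting both expansions into $\mathcal{S}_{(0,Y^{1+\eps})}$ and integrating over $x\in[0,1]$, the factor $\int_0^1 e\bigl((dl+m-n)x\bigr)\,dx$ vanishes unless $m-n=-dl$, so for each $d\mid q$ and each $l$ with $0<|l|<Y^{1+\eps}$ the double sum over $(m,n)$ collapses to a single sum over $n$ with $m=n-dl$, leaving the $y$-integral
\[
\int_0^\infty h\bigl((dY)y\bigr)\,\phi_l(dy)\, \sum_{\substack{n\in\mathbb{N}\\ m:=n-dl\in\mathbb{N}}} \frac{\lambda_f(m)\lambda_f(n)}{\sqrt{mn}}\,\kappa_f(my)\kappa_f(ny)\,\frac{dy}{y^2}.
\]
In the Maass case $\phi_l(dy) = \lambda_\phi(l)|l|^{-1/2}\kappa_{ir}\bigl((dl)y\bigr)$; comparing with $I_{ir}(L,n,dY)$ for $L=-dl$ and using $y^{-1}\,dy/y = dy/y^2$ identifies this $y$-integral with $-\lambda_\phi(l)|l|^{-1/2}S_{ir}(-dl,dY)$, the sign coming from $\kappa_{ir}(-y)=-\kappa_{ir}(y)$. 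Summing over $d$ and $l$ and re-indexing $l\mapsto -l$, the parity $\lambda_\phi(-l)=-\lambda_\phi(l)$ of an even Maass form (with the present normalization of $\kappa_{ir}$) absorbs the remaining sign and produces exactly $\sum_{0<|l|<Y^{1+\eps}}\lambda_\phi(l)|l|^{-1/2}\sum_{d\mid q}S_{ir}(dl,dY)$. The incomplete Eisenstein case is the same, except that $\phi_l(dy)$ now carries the $t$-integral from \eqref{eq:9}, which I would pull outside past the $x$- and $y$-integrations; because $\kappa_{it}$ and $\lambda_{it}$ are even in the sign of their argument, the re-indexing $l\mapsto-l$ introduces no sign, and one obtains the stated formula with $\Psi^\wedge(\tfrac12+it)\,\xi(1+2it)^{-1}$ out front.

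The one genuine point of care is Fubini at each interchange. The sum over $d\mid q$ and the truncated sum over $l$ are finite, so nothing is needed there; the $(m,n)$-sum commutes with the $y$-integral, and term-by-term $x$-integration is legitimate, because $h$ has compact support (confining $y$ to a fixed compact set), $\kappa_f(ny)$ decays rapidly in $n$, and $|\lambda_f(n)|\le\tau(n)$ by Deligne. In the incomplete Eisenstein case the further interchange with the $t$-integral is the subtle one: a priori $\xi(1+2it)^{-1}$ grows like $e^{\pi|t|/2}$ on $\Re(s)=1/2$, so the rapid decay of $\Psi^\wedge$ alone does not suffice, but $\kappa_{it}\bigl((dl)y\bigr)\ll e^{-\pi|t|/2}$ uniformly for its argument in a fixed compact subset of $\mathbb{R}_{>0}$ (the exponential decay of the $K$-Bessel function of large imaginary order), and this wins. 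With these estimates the bookkeeping above yields both displayed identities; I expect the $t$-aspect absolute convergence and the sign bookkeeping, rather than any conceptual difficulty, to be the only points requiring genuine attention.
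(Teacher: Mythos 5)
Your proof is correct and follows essentially the same route as the paper, whose own proof of this lemma is just the one-line observation that it follows by substituting the Fourier expansions \eqref{eq:f-fourier}, \eqref{eq:phi-fourier}, and \eqref{eq:9} and integrating over $x$. Your additional care with the sign conventions for $\kappa_{ir}$ and $\lambda_\phi(-l)$ under the reindexing $l\mapsto -l$, and with the $e^{\pm\pi|t|/2}$ balance between $\xi(1+2it)^{-1}$ and $\kappa_{it}$ in the Eisenstein case, correctly fills in details the paper leaves implicit.
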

\begin{proof}
  Follows by integrating the Fourier expansion
  (\ref{eq:f-fourier}) of a newform, the Fourier expansion
  (\ref{eq:phi-fourier}) of a Maass cusp form, and the formula
  (\ref{eq:9}) for the non-constant Fourier coefficients of an
  Eisenstein series.
\end{proof}

\begin{lemma}[The trivial error term $\mathcal{S}_{\geq Y^{1+\eps}}$]\label{lem:2}
  We have $\mathcal{S}_{\geq Y^{1+\eps}}
  \ll_{\phi,\eps} Y^{-10} \mu_f(1)$.
\end{lemma}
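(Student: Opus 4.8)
The plan is to exploit the fact that the weight $h_Y(dy)$ confines the variable $y$ to a range in which $dy \asymp 1/Y$, the implied constants depending only on the fixed function $h$. Consequently, for every $l$ with $|l| \geq Y^{1+\eps}$ the argument $|l|\,dy \gg Y^{\eps}$ of the Bessel factors occurring in the Fourier coefficient $\phi_l(dy)$ is large, and the rapid (indeed exponential) decay of the $K$-Bessel function makes the whole tail $\sum_{|l| \geq Y^{1+\eps}} \phi_l(dy)$ super-polynomially small, uniformly in $x$ and in the relevant range of $y$; one then concludes using the nonnegativity of $h$ and of $|f|^2 y^k$.

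The first step is to establish the pointwise bound
\[
\sum_{|l| \geq Y^{1+\eps}} \bigl\lvert \phi_l(dy) \bigr\rvert \;\ll_{\phi,\eps}\; Y^{-A}
\]
for every fixed $A \geq 1$, valid whenever $h_Y(dy) \neq 0$. When $\phi$ is a Maass eigencuspform this is immediate from $\phi_l(y) = \lambda_\phi(l)\,|l|^{-1/2}\,\kappa_{ir}(ly)$, the crude polynomial bound for $|\lambda_\phi(l)|$ furnished by the Rankin--Selberg estimate \eqref{eq:rs-bound}, and the exponential decay $\kappa_{ir}(v) \ll_\phi e^{-|v|}$ for $|v| \geq 1$; since $|l|\,dy \gg Y^{\eps}$ one is reduced to summing a geometric-type series. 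When $\phi = E(\Psi,\cdot)$ is an incomplete Eisenstein series one inserts the formula \eqref{eq:9} for $\phi_l$, uses $|\lambda_{it}(l)| \leq \tau(l)$, and splits the $t$-integral according to whether $|t|$ is smaller or larger than, say, $|l|\,dy$: on the first range one uses $|K_{it}(v)| \leq K_0(v) \ll v^{-1/2} e^{-v}$, whose decay in $v$ dominates the exponential growth of $\xi(1+2it)^{-1}$ in $t$, while on the second range one uses the standard uniform bound $|K_{it}(v)| \ll e^{-\pi|t|/2}$, which exactly cancels that growth and leaves an absolutely convergent integral by the rapid decay of $\Psi^{\wedge}$. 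Either way one gets a bound for $|\phi_l(dy)|$ that decays at least like $(|l|\,dy)^{-A}$ with $|l|\,dy \gg Y^{\eps}$, and summing over $|l| \geq Y^{1+\eps}$, with $A$ taken large enough in terms of $\eps$, yields the displayed estimate.

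Granting this, substitute into the definition of $\mathcal{S}_{\geq Y^{1+\eps}}$. Bounding the inner $l$-sum in absolute value and using the nonnegativity of $h$ and of $|f|^2 y^k$,
\[
\bigl\lvert \mathcal{S}_{\geq Y^{1+\eps}} \bigr\rvert
\;\leq\; \sum_{d \mid q} \int_0^\infty h_Y(dy) \int_0^1 \Bigl( \sum_{|l| \geq Y^{1+\eps}} \bigl\lvert \phi_l(dy) \bigr\rvert \Bigr) \lvert f \rvert^2(z)\, y^k \, \frac{dx\,dy}{y^2}
\;\ll_{\phi,\eps}\; Y^{-A} \, \mu_f\bigl( E(h_Y,\cdot) \bigr),
\]
where in the last step the pointwise bound lets one pull out $Y^{-A}$ and the remaining integral is recognized as $\mu_f(E(h_Y,\cdot))$ via the first identity of Lemma \ref{lem:3}. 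Invoking the estimate $\mu_f(E(h_Y,\cdot)) \ll Y \mu_f(1)$ from that same lemma gives $|\mathcal{S}_{\geq Y^{1+\eps}}| \ll_{\phi,\eps} Y^{1-A}\mu_f(1)$, and the choice $A = 11$ completes the proof.

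I expect the only delicate point to be the uniform treatment of $K_{it}(v)$ in the incomplete Eisenstein case: one must balance its decay in the two independent parameters $t$ and $v$ so that the exponential growth of $\xi(1+2it)^{-1}$ in $t$ is absorbed while enough decay in $v$ (equivalently, in $|l|$) survives to run the sum. This is a routine manipulation of classical Bessel estimates, and one could alternatively cite Holowinsky's handling of the corresponding tail in \cite{MR2680498} verbatim, since here the level enters only through the harmless divisor sum over $d \mid q$.
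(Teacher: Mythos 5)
Your proposal is correct and follows essentially the same route as the paper: a pointwise bound $\sum_{|l| \geq Y^{1+\eps}} |\phi_l(dy)| \ll_{\phi,\eps} Y^{-11}$ on the support of $h_Y$ (via exponential decay of $\kappa_{ir}$ in the cuspidal case and formula \eqref{eq:9} with standard $K$-Bessel estimates in the Eisenstein case), followed by recognizing the remaining integral as $\mu_f(E(h_Y,\cdot)) \ll Y\mu_f(1)$ from Lemma \ref{lem:3}. Your handling of the $t$-integral in the Eisenstein case is in fact more explicit than the paper's, which simply cites Holowinsky \cite[\S 3.2]{MR2680498} for the bound $\phi_l(y) \ll_{\phi,\eps,A} \tau(l)\, Y^{A-1/2} |l|^{-A} (1+Y/|l|)^{\eps}$.
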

\begin{proof}
  Lemma \ref{lem:2} follows from Lemma \ref{lem:3} and the
  following claim: for all $y \in \mathbb{R}_+^*$ such that
  $h_Y(y) \neq 0$, we have $\sum _{ |l| \geq Y^{1+\eps} }
  \left\lvert \phi_l(y) \right\rvert \ll_{\phi,\eps} Y^{-11}$.
  The claim is proved in \cite[\S
  3.2]{MR2680498}, as follows.  When $\phi$ is a cusp form
  of eigenvalue $1/4 + r^2$, so that $\phi_l(y) = y^{-1/2}
  \lambda_\phi(l) \kappa_{i r}(ly)$, the claim follows from
  the exponential decay of $l \mapsto \kappa_{ir}(l y)$ for $l \geq
  Y^{1+\eps}$ and $y \asymp Y^{-1}$ together with the polynomial
  growth of $l \mapsto \lambda_\phi(l)$.  When $\phi$ is an
  incomplete Eisenstein series, the integral formula
  (\ref{eq:9}) and standard bounds for the $K$-Bessel function
  show that for each positive integer $A$, we have $\phi_l(y)
  \ll_{\phi,\eps,A} \tau(l) Y^{A-1/2} |l|^{-A} ( 1 + Y/|l|
  )^\eps$; the claim then follows by summing over $|l| \geq
  Y^{1+\eps}$.
\end{proof}

\begin{proof}[Proof of Proposition \ref{prop:reduce-to-sums}]
  By Lemma
  \ref{lem:expand-int-e-of-h} and
  equation (\ref{eq:4}),
  we have
  \[
  \frac{\mu_f(\phi)}{\mu_f(1)}
  = \frac{1}{Y \mu_f(1)}
  \left(
    \mathcal{S}_0
    +
    \mathcal{S}_{(0,Y^{1+\eps})}
    +
    \mathcal{S}_{\geq Y^{1+\eps}}
  \right)
  + O_{\phi,\eps}(Y^{-1/2}).
  \]
  Proposition \ref{prop:reduce-to-sums}
  follows by combining the results of
  Lemma \ref{lem:2a}, Lemma \ref{lem:2}
  and Lemma \ref{lem:open-main-term}.
\end{proof}

\subsection{Bounds for individual shifted
  sums}\label{sec:analys-shift-sums}
We bound the individual shifted sums appearing in Definition
\ref{defn:shifted-sums};
in subsequent sections we shall
need only
our main result, Corollary \ref{cor:bound-sums}.
We
first recall
a special case of
Holowinsky's bound \cite[Theorem 2]{MR2680498}.
\begin{theorem}[Holowinsky]\label{thm:holow-sums}
  Let $\eps \in (0,1)$.
  Then for $x \geq 1$ and $l \in \mathbb{Z}_{\neq 0}$,
  we have
  \[
  \sum _{\substack{
      n \in \mathbb{N} \\
      m := n + l \in \mathbb{N} \\
      \max(m,n) \leq x
    }
  }
  | \lambda_f(m) \lambda_f(n) |
  \ll_\eps \tau(l)  \frac{
    x
    \prod_{p
      \leq x}
    ( 1+ 2 |\lambda_f(p)|/p )
  }{ \log(e x)^{2-\eps} }.
  \]
\end{theorem}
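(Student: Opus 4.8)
The statement is (a special case of) Theorem~2 of Holowinsky \cite{MR2680498}, and my plan is to reproduce the sieve behind it, whose skeleton I sketch now. First I would note that $|\lambda_f|$ is itself a nonnegative multiplicative function (being $|\cdot|$ of the multiplicative $\lambda_f$), that the Deligne bound gives $|\lambda_f(p^\nu)| \le \nu+1 \le 2^\nu$ at prime powers, and that $|\lambda_f(n)| \le \tau(n) \ll_\eps n^\eps$. The theorem is thus an instance of the general principle that for any nonnegative multiplicative $\psi$ with $\psi(p^\nu) \ll C^\nu$ and $\psi(n) \ll_\eps n^\eps$ and any nonzero shift $l$,
\[
  \sum_{\substack{n \le x \\ n+l \le x}} \psi(n)\,\psi(n+l)
  \ll_{C,\eps} \tau(l)\,\frac{x}{\log(ex)^{2-\eps}}
  \prod_{p \le x}\Bigl(1 + \tfrac{\psi(p)}{p}\Bigr)^{2};
\]
specializing $\psi = |\lambda_f|$ and using $\bigl(1 + |\lambda_f(p)|/p\bigr)^2 = \bigl(1 + 2|\lambda_f(p)|/p\bigr)\bigl(1 + O(p^{-2})\bigr)$ (valid since $|\lambda_f(p)| \le 2$), so that $\prod_{p \le x}(1 + |\lambda_f(p)|/p)^2 \asymp \prod_{p \le x}(1 + 2|\lambda_f(p)|/p)$, gives the assertion.

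To prove the displayed bound I would proceed as follows. One has $|\lambda_f| \le \mathbf{1} * g$ for a suitable nonnegative multiplicative $g$, so the naive move is to expand, interchange the order of summation, and for each pair of divisors $(a,b)$ count the $n \le x$ with $a \mid n$ and $b \mid n+l$: by the Chinese Remainder Theorem this count is $x/[a,b] + O(1)$ when $\gcd(a,b) \mid l$ and is empty otherwise, and the surviving constraint $\gcd(a,b) \mid l$ factors through the divisors of $l$, which is the source of the $\tau(l)$. This naive majorant is, however, lossy by a power of $\log x$ twice over: the accumulated $O(1)$ errors total $O(x^2)$ rather than $O(x)$, and the local factors of $\mathbf{1} * g$ are too large to produce the full savings. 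Both defects are repaired by standard devices --- apply the hyperbola method, writing each of $n$ and $n+l$ as a divisor $\le \sqrt{x}$ times a cofactor, so that the error terms now sum to $O(x)$; and estimate the cofactor sums by a Rankin-type argument in combination with the fact that almost all $n \le x$ have $\Omega(n) \le (1+\eps)\log\log x$, which is exactly what restores the two saved powers of $\log x$. For a single sum this last input is Shiu's theorem; for the shifted sum it is the work of Nair and Nair--Tenenbaum on multiplicative functions of polynomial arguments, and I would either invoke those directly or follow Holowinsky's self-contained version of the same sieve.

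The main obstacle is precisely this recovery of \emph{both} logarithmic savings with the honest Euler-product local factors, \emph{uniformly in the shift $l$}. A bound of the shape $\ll \tau(l)\,x\prod_{p \le x}(1 + 2|\lambda_f(p)|/p)$ with no logarithmic gain drops out of a crude sieve but is useless for the application; obtaining the true $\log(ex)^{-2+\eps}$ requires sieving on $n$ rather than on divisors of $n$, and keeping careful track of the local densities at primes dividing $l$ so that no more than $\tau(l)$ --- up to the harmless $\log(ex)^\eps$, which the hypothesis $\eps \in (0,1)$ is there to absorb --- is accumulated there. Everything else (the CRT count, the hyperbola split, and the passage from $\exp(\sum_{p \le x} 2|\lambda_f(p)|/p)$ to the product over $p \le x$) I would expect to be routine.
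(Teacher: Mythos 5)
The paper does not actually reprove this statement: it quotes it as Theorem~2 of Holowinsky \cite{MR2680498} and spends its effort on the strengthening (Theorem~\ref{thm:holow-sums-2}) that removes the factor $\tau(l)$; the proof of that strengthening recapitulates Holowinsky's sieve, so it is the right benchmark for your sketch. Measured against it, your proposal has a genuine gap: the skeleton you commit to --- majorize $|\lambda_f|\le \mathbf{1}*g$ with $g\ge 0$, expand, count $n$ with $a\mid n$, $b\mid n+l$ by CRT, then repair via the hyperbola method and Shiu/Nair-type input --- is the Nair--Tenenbaum route, not Holowinsky's, and it does not prove the theorem in the stated generality. Two concrete problems. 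First, any majorant $\mathbf{1}*g$ with $g$ nonnegative satisfies $(\mathbf{1}*g)(n)\ge 1$, so whatever you do downstream the resulting upper bound is at least $\sum_{n,\,n+l\le x}1\asymp x$; but in the crucial regime where $|\lambda_f(p)|$ is typically small, the target $x\prod_{p\le x}(1+2|\lambda_f(p)|/p)\log(ex)^{-2+\eps}$ is $o(x/\log x)$, so no post-processing of that expansion can recover the bound. The methods you cite never pass through such an expansion: they decompose $n$ multiplicatively into a smooth part and a rough cofactor, so that $|\lambda_f|$ is \emph{evaluated} on the smooth part rather than replaced by $1$, and only the count of $n$ with prescribed smooth parts is sieved. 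Second, and decisively, the theorem must hold for all $0<|l|\le x$ --- shifts comparable to the summation interval are exactly what the level aspect forces --- and the paper's own Remark~\ref{rmk:bounds-indiv-shift} records that the Nair route, carefully quantified, yields the bound only for $|l|\le x^{1/16-\eps}$ and with $\tau_m(l)$, $m\ge 2$, in place of $\tau(l)$. Your fallback of ``following Holowinsky's self-contained version of the same sieve'' would of course succeed, but it is not the same sieve, and the proposal does not contain it.

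For the record, the missing argument (visible in the paper's proof of Theorem~\ref{thm:holow-sums-2} up to \eqref{eq:31}) runs as follows: set $s=\alpha\log\log x$ and $z=x^{1/s}$, write the $z$-smooth parts of $m=n+l$ and $n$ as $ac$ and $bc$ with $\gcd(a,b)=1$ and $c\mid l$, use $|\lambda_f(m)\lambda_f(n)|\le 4^{s}\,|\lambda_f(ac)\lambda_f(bc)|$ to strip the rough parts at a cost of $\log(x)^{O(\alpha)}$, discard the classes with $ac$ or $bc$ exceeding $x^{\alpha}$ by Rankin's trick, and count each remaining class $\mathbb{N}_{abc}\cap[1,x]$ by the large (or Selberg) sieve; the sieve supplies the factor $\log(z)^{-2}\asymp(\log\log x/\log x)^{2}$ uniformly in $l$, and the factor $\tau(l)$ of the present statement arises only at the very end, from estimating the sum over $c\mid l$ --- which is precisely the step the paper refines.
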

Unfortunately, Theorem \ref{thm:holow-sums} is insufficient for
our purposes because $\tau(q l)$ can be quite large,
even larger asymptotically than every power of $\log(e q)$, when $q$ has many
small prime factors.  The following
refinement will suffice.
\begin{theorem}\label{thm:holow-sums-2}
  With conditions as in
  the statement of Theorem \ref{thm:holow-sums},
  we have
  \begin{equation}\label{eq:holow-sums-2}
    \sum _{\substack{
        n \in \mathbb{N} \\
        m := n + l \in \mathbb{N} \\
        \max(m,n) \leq x
      }
    }
    | \lambda_f(m) \lambda_f(n) |
    \ll_\eps
    \frac{
      x
      \prod_{p
        \leq x}
      ( 1+ 2 |\lambda_f(p)|/p )
    }{ \log(e x)^{2-\eps} } 
  \end{equation}
  where all implied constants are absolute.
\end{theorem}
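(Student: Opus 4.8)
The plan is to revisit Holowinsky's proof of Theorem \ref{thm:holow-sums} and track exactly where the divisor factor $\tau(l)$ enters, then replace the offending step with a more careful combinatorial argument that redistributes the loss across the sum rather than incurring it multiplicatively. Recall the overall structure of Holowinsky's method: one writes $|\lambda_f(m)\lambda_f(n)| \leq \tau(m)\tau(n)$ by Deligne, and more efficiently exploits multiplicativity by factoring out the ``small prime'' part. The sum $\sum_{n\le x}|\lambda_f(m)\lambda_f(n)|$ with $m=n+l$ is majorized by splitting each of $m,n$ according to its $P$-smooth part and $P$-rough part (with $P$ some power of $\log x$). The rough parts contribute, via Shiu's theorem or a Brun--Titchmarsh sieve, essentially $x/\log(ex)^{2-\eps}$ times the smooth-prime Euler product $\prod_{p\le x}(1+2|\lambda_f(p)|/p)$; the smooth parts are handled by a direct divisor-type count. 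The factor $\tau(l)$ arises precisely from the step that controls how the constraint $m - n = l$ forces divisibility relations among the smooth parts of $m$ and $n$ --- one naively sums over all ways a divisor of $l$ can be ``shared,'' producing $\tau(l)$.

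The key refinement I would carry out: instead of bounding the contribution of each divisor $e \mid l$ separately and summing to get $\tau(l)$, observe that the relevant quantity is really $\sum_{e \mid l} (\text{something that decays in } e)$, because a large shared divisor $e$ forces $m,n$ to lie in a sparse arithmetic progression, cutting the count by a factor $\asymp 1/e$ (up to the usual $\eps$-losses from smooth numbers). Concretely, I expect the bound for the ``$e$ is shared'' piece to look like $\frac{x}{e}\cdot\frac{\prod_{p\le x}(1+2|\lambda_f(p)|/p)}{\log(ex)^{2-\eps'}}$ times a bounded multiplicative factor supported on primes dividing $e$; summing $\sum_{e\mid l} 1/e \cdot (\text{bounded mult.\ factor})$ converges absolutely and is $O(1)$ uniformly in $l$, since $\sum_{e}\frac{\mu^2(e)}{e}\prod_{p\mid e}(1+O(1)) = \prod_p(1 + O(1/p)) $ --- wait, that diverges, so one must be more careful: the bounded multiplicative factor from the $\lambda_f(p)$'s at primes $p \mid e$ must be genuinely summable, which it is because it comes paired with the $1/e$ and the relevant Euler product $\prod_{p}(1+\tfrac{c}{p}(1+2|\lambda_f(p)|))$ is $\ll (\log x)^{O(1)}$, already absorbed into the $\log(ex)^{\eps}$ slack by choosing $\eps'<\eps$. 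So the mechanism is: trade a tiny bit of the logarithmic power saving ($\eps' < \eps$) for full uniformity in $l$.

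The main obstacle, and where I expect to spend the most effort, is making the sieve/Shiu-type estimate \emph{uniform in $l$ with no dependence on the arithmetic of $l$}. Shiu's theorem and the Brun--Titchmarsh inequality both have standard forms, but the standard applications in this area insert exactly the $\tau(l)$ or a factor $\prod_{p\mid l}(1+1/p)$ that we are trying to avoid; one must instead apply the sieve to the progression $n \equiv 0, n+l\equiv 0$ modulo the rough-part moduli directly and carefully account for the common factor $\gcd(\cdot,l)$ only through the convergent sum $\sum_{e\mid l}1/e$ rather than through a product over $p\mid l$. A secondary technical point is that the archimedean weight $\kappa_f$ and the test function $h$ play no role here --- the estimate is purely about the coefficients --- so Theorem \ref{thm:holow-sums-2} is stated cleanly for the bare sum, and I would prove it in that form and only afterwards (in the next subsection) feed it into the shifted sums $S_s(l,x)$ via partial summation against $I_s(l,n,x)$. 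I would also double-check that the smooth-part count does not secretly reintroduce $l$-dependence: the number of $n \le x$ whose $P$-smooth part is a fixed value $a$ and with $n + l$ having $P$-smooth part a fixed value $b$ is controlled by requiring $a \mid n$, $b \mid (n+l)$, which has $\le x/\mathrm{lcm}(a,b) + 1$ solutions, and the ``$+1$'' and the interaction with $\gcd(a,b)\mid l$ is again handled by the $\sum_{e\mid l}1/e$ device. Once that uniformity is secured, summing the smooth-part contributions against the Euler product $\prod_{p\le P}(1+2|\lambda_f(p)|/p)$ reproduces the claimed bound with absolute implied constant.
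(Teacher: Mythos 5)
Your plan is essentially the paper's proof: the paper likewise splits $m=n+l$ and $n$ into $z$-smooth and $z$-rough parts, sieves the rough parts in the fibers of $n\mapsto(a,b,c)$, and replaces Holowinsky's $\tau(l)=\sum_{c\mid l}1$ by the weighted divisor sum $\sum_{c\mid l}c^{-1}\tfrac{l/c}{\phi(l/c)}\prod_{p^{\nu}\,\|\, c}(3\nu+3)^{2}\ll\prod_{p\mid l}(1+O(p^{-1}))\ll(\log\log x)^{O(1)}$, which is then absorbed into the $\log(ex)^{\eps}$ slack exactly via the $\sum_{e\mid l}e^{-1}\cdot(\text{bounded multiplicative factor})$ device you describe. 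The one slip is your smoothness threshold: it must be $z=x^{1/(\alpha\log\log x)}$ rather than a power of $\log x$, so that the sieve saves $\log(z)^{2}\asymp\log(x)^{2}/(\log\log x)^{2}$ and the factor $4^{s}$ coming from the at most $s\asymp\log\log x$ rough prime factors stays below $\log(x)^{\eps}$; with $P=(\log x)^{O(1)}$ both of these steps would fail.
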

\begin{proof}
  In \cite[Thm 3.1]{PDN-HMQUE}, we generalized Holowinsky's
  bound \cite[Thm 2]{MR2680498} to totally real number
  fields $\mathbb{F}$.  Along the way we proved a pair of results \cite[Thm
  4.10]{PDN-HMQUE} and \cite[Thm 7.2]{PDN-HMQUE} either of which
  imply Theorem \ref{thm:holow-sums-2}.
  For completeness,
  we shall give the argument here
  in the special case
  $\mathbb{F} = \mathbb{Q}$,
  which borrows heavily from that of Holowinsky;
  up to \eqref{eq:31} we essentially
  recall his argument, and after that introduce our refinement.
  Let $\lambda(n) = |\lambda_f(n)|$, so that
  \begin{equation}\label{eq:8}
    \text{$\lambda$ is a nonnegative
      multiplicative function
      satisfying $\lambda(n) \leq \tau(n)$.}
  \end{equation}
  We may assume that $1 \leq l \leq x$.
  Fix $\alpha \in (0,1/2)$ (to be chosen sufficiently
  small at the end of the proof)
  and set
  \[
  y = x^{\alpha},
  \quad 
  s = \alpha \log \log (x),
  \quad
  z = x^{1/s}.
  \]
  For $x \gg_\alpha 1$ we have $10 \leq z \leq y \leq x$,
  as we shall henceforth assume.
  For each $n \in \mathbb{N}$,
  write $m = n + l \in \mathbb{N}$.
  Define the \emph{$z$-part} of a positive integer
  to be the greatest
  divisor of that integer supported on primes $p \leq z$.
  There exist unique positive integers $a,b,c$
  such that $\gcd(a,b)=1$
  and $a c$ (resp. $b c$) is the $z$-part of
  $m$ (resp. $n$);
  such triples $a,b,c$ satisfy
  \begin{equation}\label{eq:7}
    p | a b c \Rightarrow p \leq z,
    \quad c | l,
    \quad \text{and } \gcd(a,b)=1.
  \end{equation}
  Write
  $\mathbb{N} = \sqcup_{a,b,c}\mathbb{N}_{abc}$
  for the fibers of
  $n \mapsto (a,b,c)$.
  The assumption \eqref{eq:8}
  implies $\lambda(m) \lambda(n) \leq 4^s \lambda(a c) \lambda(b
  c)$,
  so that
  \begin{eqnarray*}
    \sum_{n \in \mathbb{N} \cap [1,x]}
    \lambda(m) \lambda(n)
    &=&
    \sum_{a,b,c}
    \sum_{n \in \mathbb{N}_{abc} \cap [1,x]}
    \lambda(m) \lambda(n) \\
    &\leq& 4^s
    \sum_{a,b,c}
    \lambda(a c) \lambda(b c)
    \cdot 
    \# (\mathbb{N}_{abc} \cap [1,x]).
  \end{eqnarray*}
  Holowinsky asserts that Rankin's trick implies that the
  contribution to the above from $a,b,c$ for which $|a c| > y$
  or $|b c| > y$ is $\ll_{\alpha,A} x \log(x)^{-A}$ for any $A$;
  we spell out an alternate proof of this assertion in
  \cite[Lemma 7.3]{PDN-HMQUE}.  Now, an integer belongs to
  $\mathbb{N}_{abc}$ only if it satisfies some congruence
  conditions modulo each prime $p \leq z$ (see
  \cite[p.14]{MR2680498}, or \cite[Lemma 7.3]{PDN-HMQUE} for a
  detailed discussion); as in \cite{MR2680498} or
  \cite[Corollary 7.8]{PDN-HMQUE}, an application of the large
  sieve (or Selberg's sieve) shows that if $|a c| \leq y$,
  $|b c| \leq y$ and $x \gg y^2$, then\footnote{This bound is slightly poorer than that obtained by Holowinsky
    because we have been more precise in our calculation of the
    residue classes sieved out by prime divisors of $c^{-1} l$;
    the discrepancy here does not matter in the end.}
  \begin{equation}
    \#(\mathbb{N}_{abc} \cap [1,x])
    \ll \frac{x + (y z)^2}{\log(z)^2}
    \frac{l}{c^2 \phi(a b c^{-1} l)}.
  \end{equation}
  Since $(y z)^2
  \ll_\alpha x$, $\log(z)^2 \asymp_\alpha
  \log \log(x)^{-2}
  \log(x)^2$, $4^s \ll_\eps \log(x)^\eps$ (for $\alpha
  \ll_\eps 1$), and $\phi(a b c^{-1} l) \geq \phi(c^{-1} l)
  \phi(a) \phi(b)$, we see that Theorem \ref{thm:holow-sums-2}
  follows from the bound\footnote{It is here
    that Holowinsky gives up the factor $\tau(l)$.}
  \begin{equation}\label{eq:31}
    \sum_{
      \substack{
        c | l \\
        p | c \Rightarrow p \leq z         
      }
    }
    \frac{1}{c}
    \frac{l/c}{\phi(l/c)}
    \mathop{
      \sum _{|a c| \leq y}
      \sum _{|b c| \leq y}
    }_{ p | a b \Rightarrow p \leq z}
    \frac{\lambda(a c) \lambda(b c)}{\phi(a) \phi(b)}
    \ll \log \log(x)^{O(1)}
    \prod_{p \leq z}
    \left( 1 + \frac{2 \lambda(p)}{p} \right),
  \end{equation}
  which we now establish.
  Note first that
  \begin{equation}
    \mathop{
      \sum _{|a c| \leq y}
      \sum _{|b c| \leq y}
    }_{ p | a b \Rightarrow p \leq z}
    \frac{\lambda(a c) \lambda(b c)}{\phi(a) \phi(b)}
    \leq \left( \prod_{p \leq z} \sum_{k \geq 0}
      \frac{\lambda(p^{k+v_p(c)})}{\phi(p^k)} \right)^2.
  \end{equation}
  Using that $\lambda(p^k) \leq k+1$ and $p \geq 2$
  and summing some geometric series
  as in \cite[Lemma 7.4]{PDN-HMQUE}
  gives
  \begin{equation*}
    \sum_{k \geq 0} \frac{\lambda(p^{k+\nu})}{\phi(p^k)}
    \leq
    \nu + 1 + \sum_{k \geq 1} \frac{\nu + k + 1}{p^{k-1}(p-1)}
    \leq 3 \nu + 3
  \end{equation*}
  for each $\nu \geq 1$,
  while for $\nu = 0$
  \begin{eqnarray*}
    \sum_{k \geq 0} \frac{\lambda(p^k)}{\phi(p^k)}
    &=&
    \left( 1 + \frac{\lambda(p)}{p} \right)
    \left( 1 +
      \frac{
        \lambda(p) \left(\frac{1}{\phi(p)} - \frac{1}{p} \right)
        + \sum_{k \geq 2} \frac{\lambda(p^k)}{\phi(p^k)}
      }
      {
        1 + \frac{\lambda (p)}{p}
      } \right) \\
    &\leq&
    \left( 1 + \frac{\lambda(p)}{p} \right)
    \left( 1 + \frac{20}{p}\right).
  \end{eqnarray*}
  Thus the LHS of \eqref{eq:31}
  is bounded by $\zeta(2)^{40} \psi(l) \prod_{p \leq z} (1 + \lambda(p)
  p^{-1})^2$,
  where $\psi$ is the multiplicative function
  \begin{equation}
    \psi(l) = \sum_{c | l}
    \frac{1}{c}
    \frac{l/c}{\phi(l/c)}
    \prod_{p^\nu || c} (3 \nu + 3)^2.
  \end{equation}
  By direct calculation
  and the inequality $p \geq 2$,
  we have
  \[
  \psi(p^a) = \frac{1}{1 - p ^{-1} }
  + \frac{9}{p ^a } \left(
    (a + 1) ^2 + \frac{1}{1 - p ^{-1} }
    \sum_{i=1}^{a-1}
    \frac{(i+1)^2}{p^i}
  \right)
  \leq 1 + Cp^{-1}
  \]
  for some constant $C \leq 10^6$,
  so that
  $\psi(l) \leq \prod_{p|l} (1 + Cp^{-1})
  \ll \log \log(x)^{C}$
  for $1 \leq l \leq x$.
  This estimate for $\psi(l)$ establishes the claimed bound \eqref{eq:31}.
\end{proof}

\begin{remark}\label{rmk:bounds-indiv-shift}
  A bound of the form \eqref{eq:holow-sums-2} but with an
  \emph{unspecified} dependence on the parameter $l$ may be
  derived from the work of Nair \cite{MR1197420}.  We have
  attempted to quantify this dependence by working through the
  details of Nair's arguments, and have shown that
  they imply
  \begin{equation}
    \sum _{\substack{
        n \in \mathbb{N} \\
        m := n + l \in \mathbb{N} \\
        \max(m,n) \leq x
      }
    }
    | \lambda_f(m) \lambda_f(n) |
    \ll_\eps
    \tau_m(l)
    \frac{
      x
      \prod_{p
        \leq x}
      ( 1+ 2 |\lambda_f(p)|/p )
    }{ \log(e x)^{2-\eps} } 
  \end{equation}
  for some $m \geq 2$ (probably $m = 2$) and all $0 \neq |l|
  \leq x^{1/16-\eps}$; in deducing this we have
  used
  the Ramanujan bound $|\lambda_f(p)| \leq 2$.
  This strength and uniformity falls far short
  of what is needed in treating the level aspect of QUE.
  
  A mild strengthening of \eqref{eq:holow-sums-2} subject to the
  additional constraint $4 l^2 \leq x$ appears in the recent
  book of Iwaniec-Friendlander \cite[Thm 15.6]{MR2647984}, which
  was released after we completed the work of this paper.  The
  condition $4 l^2 \leq x$ makes their result inapplicable in
  our treatment of the level aspect of QUE, where $l$ can be
  nearly as large as $x$.  However, it seems to us that one can
  remove this condition by a suitable modification of their
  arguments.
\end{remark}

Recall from Definition \ref{defn:shifted-sums} that the sums
$S_s(l,x)$ involve a certain integral
$I_s(l,n,x)$.
\begin{lemma}\label{lem:bound-integral}
  For each positive integer $A$,
  the integral $I_s(l,n,x)$ satisfies the upper bound
  \[
  I_s(l,n,x) \ll_{A}
  \frac{\Gamma(k-1)}{(4 \pi)^{k-1}}
  \sqrt{m n } \cdot \max \left( 1, \frac{\max(m,n)}{x k}
  \right)^{-A}
  \]
  uniformly for $s \in i \mathbb{R} \cup (-1/2,1/2)$,
  $n \in \mathbb{N}$, $l \in
  \mathbb{Z}_{\neq 0}$, and $x \geq 1$.  Here $m := n + l$, as
  usual.
\end{lemma}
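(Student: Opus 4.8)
The plan is to bound $|I_s(l,n,x)|$ crudely, discarding all sign/oscillation information in $h$ and $\kappa_s$ and exploiting only the explicit shape $\kappa_f(t)=t^{k/2}e^{-2\pi t}$ of the weight-$k$ archimedean Whittaker function. First I would use the bounds $|h(xy)|\ll 1$ (valid since $h$ is a fixed compactly supported continuous function) and $|\kappa_s(ly)|\le 1$ (recorded in \S\ref{sec:modular-forms-their} for $s\in i\mathbb{R}\cup(-1/2,1/2)$), together with $\operatorname{supp}(h)\subseteq[c_1,c_2]$ for fixed $0<c_1<c_2$, to reduce to
\[
|I_s(l,n,x)|\ll\int_{c_1/x}^{c_2/x}\kappa_f(my)\kappa_f(ny)\,\frac{dy}{y^2}=(mn)^{k/2}\int_{c_1/x}^{c_2/x}y^{k-2}e^{-2\pi(m+n)y}\,dy,
\]
using $\kappa_f\ge 0$ and $\kappa_f(my)\kappa_f(ny)=(mn)^{k/2}y^k e^{-2\pi(m+n)y}$.

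Next I would substitute $u=2\pi(m+n)y$ to rewrite the right-hand side in terms of a (normalized) incomplete Gamma integral, and then apply the inequality $m+n\ge 2\sqrt{mn}$ in the shape
\[
\frac{(mn)^{k/2}}{(2\pi(m+n))^{k-1}}=\sqrt{mn}\,\frac{1}{(2\pi)^{k-1}}\left(\frac{\sqrt{mn}}{m+n}\right)^{k-1}\le\frac{\sqrt{mn}}{(4\pi)^{k-1}}.
\]
This is the step that manufactures the prefactor $\Gamma(k-1)/(4\pi)^{k-1}$: writing $\int_a^b u^{k-2}e^{-u}\,du=\Gamma(k-1)\,Q$ with $a:=2\pi c_1(m+n)/x$, $b:=2\pi c_2(m+n)/x$, and $Q\in[0,1]$ the mass the Gamma$(k-1,1)$ law assigns to $[a,b]$, I would obtain $|I_s(l,n,x)|\ll \Gamma(k-1)(4\pi)^{1-k}\sqrt{mn}\cdot Q$, so it remains only to bound $Q$.

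The last step is to show $Q\ll_A\max(1,\max(m,n)/(xk))^{-A}$. Since $m,n\ge 1$ gives $m+n\asymp\max(m,n)$ and $k\ge 2$ even gives $k-1\asymp k$, the parameter $\lambda:=a/(k-1)$ satisfies $\lambda\asymp\max(m,n)/(xk)$ with absolute implied constants (the $h$-dependence entering only through $c_1,c_2$). If $\lambda$ is bounded above by an absolute constant, the trivial estimate $Q\le 1$ already suffices, since then $\max(1,\max(m,n)/(xk))$ is bounded. If instead $\lambda$ exceeds a sufficiently large absolute constant (in particular $\lambda\ge 2$), then $Q$ is at most the upper tail $\Gamma(k-1)^{-1}\int_a^\infty u^{k-2}e^{-u}\,du$; I would estimate this by writing $u^{k-2}e^{-u}=u^{k-2}e^{-u(1-1/\lambda)}\cdot e^{-u/\lambda}$, noting the first factor is decreasing on $[a,\infty)$ once $\lambda\ge 2$, and using the crude lower bound $\Gamma(k-1)\ge((k-2)/e)^{k-2}$, to conclude $Q\le(\lambda e^{1-\lambda})^{k-1}\le\lambda e^{1-\lambda}\ll_A\lambda^{-A}$ (using $\lambda e^{1-\lambda}\le 1$ and $k-1\ge 1$), which is $\asymp_A\max(1,\max(m,n)/(xk))^{-A}$.

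I expect the only delicate point to be this final tail estimate, which must be carried out with full uniformity in the weight $k$ (equivalently, one may invoke the Chernoff bound for the Gamma distribution, optimized at $\theta=1-(k-1)/a$); everything else is elementary, the real content being the single observation that $m+n\ge 2\sqrt{mn}$ forces the ``off-diagonal'' integral $\int\kappa_f(my)\kappa_f(ny)\,y^{-2}\,dy$ to be no larger than $(4\pi)^{1-k}\Gamma(k-1)\sqrt{mn}$, i.e.\ essentially its diagonal value, with the exponential decay in $\max(m,n)/(xk)$ coming for free from the restriction of the $y$-integral to the support of $h(x\,\cdot\,)$.
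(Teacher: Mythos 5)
Your argument is correct, and it takes a genuinely different route from the paper's. Both proofs begin identically (discard $\kappa_s$ via $|\kappa_s|\le 1$, and use $m+n\ge 2\sqrt{mn}$ to produce the factor $\sqrt{mn}\,(4\pi)^{1-k}$), but they diverge in how the weight function $h$ and the decay in $\max(m,n)/(xk)$ are handled. The paper replaces $h(xy)$ by its Mellin integral, shifts the contour to $\Re(w)=A$, evaluates the inner $y$-integral exactly as $\Gamma(w+k-1)/(4\pi\frac{m+n}{2})^{w+k-1}$, and then extracts the decay from the rapid decay of $h^\wedge$ on vertical lines together with the ratio bound $\Gamma(w+k-1)/\Gamma(k-1)\ll_A k^A(1+|w|^2)$; the factor $(x/(2\pi(m+n)))^A$ supplies the saving, and the case $A=0$ supplies the trivial bound. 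You instead use only that $h$ is bounded with compact support in $(0,\infty)$, reduce to an incomplete Gamma integral over $u\in[a,b]$ with $a\asymp (m+n)/x$, and prove the decay by a Chernoff-type tail estimate $Q\le(\lambda e^{1-\lambda})^{k-1}$ for the normalized Gamma$(k-1)$ mass beyond $a=(k-1)\lambda$ (your verification that the threshold $\lambda\ge 2$ makes $u\mapsto u^{k-2}e^{-u(1-1/\lambda)}$ decreasing on $[a,\infty)$, and that $(\lambda e^{1-\lambda})^{k-1}\le\lambda e^{1-\lambda}\ll_A\lambda^{-A}$, is sound and uniform in $k$). What each buys: your version is more elementary (no contour shifting, no smoothness of $h$ needed, only boundedness and compact support) and makes the probabilistic mechanism of the decay transparent; the paper's version does not require compact support of $h$ (rapid decay of $h^\wedge$ suffices) and recycles the same Mellin computation that appears elsewhere (e.g.\ in Lemma \ref{lem:3}), which is presumably why it was chosen.
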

\begin{proof}
  Let $s, l, m, n$ be as above, and let $A \geq 0$.
  Then $|\kappa_s(y)| \leq 1$, so that
  by the Mellin formula we have
  \begin{eqnarray*}
    I_s(l,n,x) &\leq& \int_0^\infty h(x y)
    \kappa_f(m y) \kappa_f(n y) y^{-1} \, \frac{d y}{y} \\
    &=& \int_{(A)} h^\wedge(w) x^w
    \int_{\mathbb{R}_+^*} y^{w - 1} \kappa_f(m y) \kappa_f(n y) \,
    \frac{d y}{y} \, \frac{d w}{2 \pi i} \\
    &=& \frac{(\sqrt{m n })^k}{\left( 4 \pi \left( \frac{m +
            n}{2} \right) \right)^{k-1} } \int_{(A)} h^\wedge(w)
    \left( \frac{x}{4 \pi \left( \frac{m + n }{2} \right)}
    \right)^w
    \Gamma(w + k - 1)
    \, \frac{d w}{2 \pi i} \\
    &\ll_{A}&
    \frac{\Gamma(k-1)}{(4 \pi)^{k-1}}
    \sqrt{m n} \left( \frac{\max(m,n)}{x k}
    \right)^{-A}.
  \end{eqnarray*}
  Here we have used the arithmetic
  mean-geometric mean inequality, the well-known bound
  \cite[Ch 7,
  Misc. Ex 44]{MR0178117}
  \[
  \frac{\Gamma(w+k-1)}{\Gamma(k-1)} \ll_{A} (k-1)^A (1 + k^{-1}(1+|w|^2))
  \ll k^A ( 1 + |w|^2)
  \]
  for $\Re(w) = A$, and the rapid decay
  of $h^\wedge$.  The case $A = 0$ gives
  $I_s(l,n,x) \ll_k (4 \pi)^{-k+1} \Gamma(k-1) \sqrt{m n}$, which combined with the case
  that $A$ is a positive integer yields the assertion of the lemma.
\end{proof}
\begin{remark}
  See \cite[Lem 4.3]{PDN-HMQUE}
  and \cite[Cor 4.4]{PDN-HMQUE}
  for a fairly sharp refinement of Lemma \ref{lem:bound-integral}.
\end{remark}

\begin{corollary}\label{cor:bound-sums}
  The shifted sums $S_s(l,x)$ satisfy the upper bound
  \begin{equation}\label{eq:27}
    S_s(l,x) \ll_{\eps}
    \frac{\Gamma(k-1)}{(4 \pi)^{k-1}}
    \frac{x k}{ \log(x k)^{2-\eps} } 
    \prod_{p
      \leq x k}
    \left( 1+ \frac{2|\lambda_f(p)|}{p} \right)
  \end{equation}
  uniformly for $s \in i \mathbb{R} \cup
  (-1/2,1/2)$ and $x \geq 1$.
\end{corollary}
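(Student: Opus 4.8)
Here is the plan. First I would pass to absolute values in Definition \ref{defn:shifted-sums} and insert Lemma \ref{lem:bound-integral} with a parameter $A$ (ultimately $A=3$) left free; the factor $\sqrt{mn}$ produced by that lemma cancels the $1/(\sqrt m\sqrt n)$ in $S_s(l,x)$, so that, with $m:=n+l$,
\[
S_s(l,x)\ll_A \frac{\Gamma(k-1)}{(4\pi)^{k-1}}\sum_{\substack{n\in\mathbb N,\ m\in\mathbb N}}|\lambda_f(m)\lambda_f(n)|\max\!\Bigl(1,\tfrac{\max(m,n)}{xk}\Bigr)^{-A},
\]
uniformly in $s\in i\mathbb R\cup(-1/2,1/2)$ and $x\ge 1$ (the implied constant is allowed to depend on $A$ once $A$ is fixed).

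Next I would split the $n$-sum into the dyadic ranges $\max(m,n)\in(2^{j-1}xk,2^jxk]$ for $j\ge 0$, with the convention that $j=0$ collects all $n$ with $\max(m,n)\le xk$. On the $j$-th piece the weight $\max(1,\max(m,n)/(xk))^{-A}$ is $\asymp 2^{-jA}$, and the remaining sum of $|\lambda_f(m)\lambda_f(n)|$ over $\max(m,n)\le 2^jxk$ is exactly what Theorem \ref{thm:holow-sums-2} bounds, applied with $2^jxk$ in place of $x$; note $2^jxk\ge xk\ge 2$ since $x\ge 1$ and $k\ge 2$, so all the logarithms are bounded below. This shows the $j$-th piece contributes
\[
\ll_\eps 2^{-jA}\cdot\frac{2^jxk}{\log(e\,2^jxk)^{2-\eps}}\prod_{p\le 2^jxk}\Bigl(1+\tfrac{2|\lambda_f(p)|}{p}\Bigr).
\]

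Finally I would compare each dyadic piece to the $j=0$ term. Since $\log(e\,2^jxk)\ge\log(exk)$ the denominator only helps, and the Euler product over the extra primes $xk<p\le 2^jxk$ is controlled, via the Deligne bound $|\lambda_f(p)|\le 2$ and Mertens' theorem, by $\prod_{xk<p\le 2^jxk}(1+4/p)\ll\bigl(\log(2^jxk)/\log(xk)\bigr)^{4}\le(1+j)^{4}$, the last inequality because $\log(xk)\ge\log 2$ gives $\log(2^jxk)/\log(xk)=1+j\log 2/\log(xk)\le 1+j$. Hence the $j$-th piece is $\ll_\eps 2^{-j(A-1)}(1+j)^4$ times the $j=0$ bound; taking $A=3$ makes $\sum_{j\ge 0}2^{-j(A-1)}(1+j)^4$ an absolute constant, and summing over $j$, together with $\log(exk)\asymp\log(xk)$ (again because $xk\ge 2$), yields \eqref{eq:27}. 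I do not expect a genuine obstacle here: the real difficulty has already been absorbed into Theorem \ref{thm:holow-sums-2} (and its large-sieve input); the only points requiring care are the bookkeeping of the $A$-dependence in Lemma \ref{lem:bound-integral} and checking that every logarithm stays bounded away from $0$ thanks to $xk\ge 2$.
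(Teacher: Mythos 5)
Your proposal is correct and follows essentially the same route as the paper: insert Lemma \ref{lem:bound-integral}, decompose dyadically according to $\max(m,n)/(xk)$, apply Theorem \ref{thm:holow-sums-2} on each piece, and absorb the extra Euler factor over $xk<p\le 2^j xk$ via the Deligne bound into a convergent sum over $j$. The only (immaterial) difference is that the paper takes $A=2$ rather than $A=3$, which already suffices since the ratio of Euler products grows only polynomially in $j$.
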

\begin{proof}
  Let us set $X = x k$
  and temporarily denote by $T_f(x,l,\eps)$ the right-hand side
  of \eqref{eq:27} without the factor
  $(4 \pi)^{-k+1} \Gamma(k-1)$.
  By Definition \ref{defn:shifted-sums}
  and Lemma \ref{lem:bound-integral},
  we need only show that
  \begin{equation}\label{eq:6}
    \sum _{\substack{
        n \in \mathbb{N} \\
        m := n + l \in \mathbb{N}
      }
    }
    | \lambda_f(m) \lambda_f(n) |
    \cdot 
    \max \left(
      1, \frac{\max(m,n)}{X}
    \right)^{-A}
    \ll_\eps T_f(x,l,\eps)
  \end{equation}
  for some positive integer $A$.
  Take $A = 2$.
  We may assume that $X = x k \geq 10$.
  By Theorem \ref{thm:holow-sums-2}
  and the Deligne bound $|\lambda_f(p)| \leq 2$, the left
  hand side of \eqref{eq:6} is
  \begin{eqnarray*}
    &\ll_\eps&
    T_f(x,l,\eps)
    \sum_{n=0}^\infty 2^{-n A} 2^n
    \left( \frac{\log(X)}{\log(2^n X)} \right)^{2-\eps}
    \prod_{X < p \leq 2^n X}
    \left(
      1 + \frac{2|\lambda_f(p)|}{p}
    \right)
    \\
    &\ll&
    T_f(x,l,\eps)
    \sum_{n=0}^\infty 2^{-(A-1) n}
    \exp \left( 4 \log \frac{\log (2^n X)}{\log(X)} \right).
  \end{eqnarray*}
  The inner sum converges and is bounded uniformly
  in $X$, so we obtain the desired estimate \eqref{eq:6}.
\end{proof}

\subsection{Bounds for sums of shifted sums}\label{sec:sums-shifted-sums}
We complete the proof of Theorem \ref{thm:holow} by
bounding the sums of shifted sums that arose
in Proposition \ref{prop:reduce-to-sums}.
\begin{lemma}\label{lem:stupid-sums}
  For each $\eps \in (0,1)$ and each squarefree number $q$, we
  have
  \[
  \sum_{d|q}
  \frac{d}{\log(d k)^{2-\eps}}
  \ll 
  \frac{q \log \log(e^e q)}{\log(q k)^{2-\eps}}
  \ll_\eps
  \frac{q}{\log(q k)^{2-2\eps}}.
  \]
\end{lemma}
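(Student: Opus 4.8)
The plan is to establish the bound in two stages, corresponding to the two inequalities in the statement. For the first inequality, the key point is that each divisor $d$ of $q$ satisfies $d \le q$, so $\log(dk) \ge$ something, but we need an \emph{upper} bound on $1/\log(dk)^{2-\eps}$, so the naive replacement $\log(dk) \ge \log k$ is the wrong direction. Instead I would observe that replacing $\log(dk)$ by $\log(qk)$ in the denominator \emph{increases} each term by a factor $(\log(qk)/\log(dk))^{2-\eps}$, which is harmless \emph{provided} $d$ is not too much smaller than $q$; for the small divisors $d$ one must argue differently. The clean way to organize this is to split the sum over $d \mid q$ according to the size of $d$ relative to a threshold, say $d \le \sqrt q$ versus $d > \sqrt q$, pair each small divisor $d$ with the complementary divisor $q/d$ (which is then $\ge \sqrt q$), and in each pair bound the worse term. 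Concretely, write
\[
\sum_{d \mid q} \frac{d}{\log(dk)^{2-\eps}}
= \sum_{\substack{d \mid q \\ d \le q/d}} \left( \frac{d}{\log(dk)^{2-\eps}} + \frac{q/d}{\log((q/d)k)^{2-\eps}} \right)
\le 2 \sum_{\substack{d \mid q \\ d \le q/d}} \frac{q/d}{\log((q/d)k)^{2-\eps}},
\]
up to the obvious care when $q$ is a perfect square (impossible here since $q$ is squarefree unless $q=1$, a trivial case). Now each summand has numerator $q/d \in [\sqrt q, q]$, hence $\log((q/d)k) \ge \tfrac12 \log(qk)$ once $q/d \ge \sqrt{q}$... but this still loses a bounded constant only, which is fine. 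So the whole sum is $\ll q/\log(qk)^{2-\eps}$ times the number of divisors $d \le \sqrt q$, weighted by $d/\sqrt q$ or so — this is where the $\log\log$ enters.

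The cleaner route, which I would actually write up, avoids the pairing and instead uses a direct convexity-type estimate. Factor $\log(dk) \ge \max(\log d, \log k)$ and split: the terms with $d \le k$ contribute at most $\sum_{d \mid q} d / \log(k)^{2-\eps} \ll \sigma(q)/\log(qk)^{2-\eps}$ — no good, since $\sigma(q)$ can be $q \log\log q$, which is exactly the bound we want, so this half is already fine. For the terms with $d > k$, use $\log(dk) \ge \log d$ and estimate $\sum_{d \mid q,\, d > k} d/\log(d)^{2-\eps}$. Here the main idea is that among the divisors of a squarefree $q$, the function $d \mapsto d/\log(d)^{2-\eps}$ is, after taking logs, convex in $\log d$, and the divisors of $q$ with $d \in [q^{1/2}, q]$ dominate; since $\log(d) \asymp \log q$ for all such $d$, their contribution is $\ll \log(q)^{-(2-\eps)} \sum_{d \mid q} d = \log(q)^{-(2-\eps)} \sigma(q) \ll q \log\log(e^e q)/\log(qk)^{2-\eps}$, using the classical bound $\sigma(q)/q \ll \log\log(e^e q)$ (a consequence of Mertens' theorem, i.e.\ a weak form of the prime number theorem). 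The divisors with $d \in (k, q^{1/2}]$ are handled by the trivial bound $\sum_{d \mid q, d \le q^{1/2}} d \le q^{1/2} \tau(q) \ll q$, which already beats the target.

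For the second inequality, $\log\log(e^e q) \ll_\eps \log(q)^\eps \le \log(qk)^\eps$ since $\log\log$ grows slower than any positive power; this is immediate and contributes only the stated loss of $\eps$ in the exponent (replacing $2-\eps$ by $2-2\eps$ and renaming). The main obstacle — and the only genuinely substantive point — is isolating the contribution of the large divisors $d \asymp q$ and bounding $\sigma(q)/q$, i.e.\ recognizing that the extremal configuration is when $q$ is a product of the first several primes, where $\sigma(q)/q = \prod_{p \mid q}(1+1/p) \ll \log\log q$. Everything else is routine dyadic bookkeeping. I would present the argument in the order: (i) reduce to squarefree $q > 1$; (ii) split the divisor sum at $d = k$ and at $d = q^{1/2}$; (iii) bound the small-$d$ ranges trivially; (iv) bound the large-$d$ range by $\log(q)^{-(2-\eps)}\sigma(q)$ and invoke $\sigma(q) \ll q\log\log(e^e q)$; (v) combine and absorb the $\log\log$ into $\log(qk)^\eps$ for the second inequality.
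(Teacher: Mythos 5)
Your core idea---split the divisors of $q$ at $\sqrt{q}$, bound the contribution of the large divisors by $\sigma(q)/\log(qk)^{2-\eps}$ using $\log(dk)\geq\tfrac12\log(qk)$ for $d>\sqrt{q}$, and invoke $\sigma(q)=q\prod_{p\mid q}(1+p^{-1})\ll q\log\log(e^e q)$---is correct and genuinely different from the paper's argument, and in fact shorter. The paper instead introduces $\beta(x)=x/\log(e^e xk)^{2-\eps}$, uses convexity of $x\mapsto\log\beta(e^x)$ to majorize $\sum_{d\mid q}\beta(d)/\beta(q)$ by the corresponding sum over divisors of the primorial $p_1\dotsb p_r$ with $r=\omega(q)$, and then splits those $2^r$ divisors according to whether they have more or fewer than $r/10$ prime factors, using the prime number theorem in the form $\log(p_1\dotsb p_r)=r\log r\,(1+o(1))$; the $\log\log$ there comes from $\sum_{d\mid p_1\dotsb p_r}1/d$, which is the same quantity $\sigma/q$ in disguise. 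Your route localizes the entire difficulty in the single classical bound $\sigma(q)/q\ll\log\log(e^e q)$ (Mertens suffices, as you say), whereas the paper's reduction to the primorial makes the ``extremal configuration'' explicit but is not needed for the squarefree case at hand. Your handling of the second inequality of the lemma is correct and matches the paper's.

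Two steps of your write-up do need repair, though neither is fatal. First, the displayed pairing inequality presumes that $x\mapsto x/\log(xk)^{2-\eps}$ is increasing, which fails when $xk<e^{2-\eps}$ (e.g.\ $d=1$, $k=2$); this is exactly why the paper works with $\log(e^e xk)$ in its $\beta$. Second, your treatment of the range $d\leq k$ is backwards as written: since $\log k\leq\log(qk)$, the quantity $\sum_{d\mid q}d/\log(k)^{2-\eps}$ is \emph{larger} than $\sigma(q)/\log(qk)^{2-\eps}$, not smaller, and the purely trivial bound $\sum_{d\mid q,\,d\leq\sqrt{q}}d\ll q$ genuinely fails to beat $q/\log(qk)^{2-\eps}$ when $q$ is fixed and $k\to\infty$. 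One must retain the factor $\log(dk)^{-(2-\eps)}\leq\log(\max(2,k))^{-(2-\eps)}$ on the small divisors and separate the cases $k\geq\sqrt{q}$, where $\log k\asymp\log(qk)$ and $\sum_{d\leq k}d\leq\sigma(q)\ll q\log\log(e^e q)$ suffices, and $k<\sqrt{q}$, where $\log(qk)\asymp\log q$ and $\sum_{d\mid q,\,d\leq\sqrt{q}}d\leq\sqrt{q}\,\tau(q)\ll q/\log(q)^{2}$ suffices. With those adjustments your proof closes.
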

\begin{proof}
  Suppose that $q$ is the product of $r \geq 1$ primes $q_1 <
  \dotsb < q_r$.  Let $p_1 < \dotsb < p_r$ be the first $r$
  primes, so that $p_i \leq q_i$ for $i = 1, \dotsc, r$.  Define
  $\beta(x) = x / \log(e^e x k)^{2-\eps}$; we have chosen
  this particular definition so that $\beta$ is increasing on
  $\mathbb{R}_{\geq 1}$
  and $\beta(x) \asymp x/\log(x k)^{2 - \eps}$
  for $x \in \mathbb{R}_{\geq 1}$.  The map
  \[
  \mathbb{R}_{\geq 1} \ni x \mapsto \log \beta(e^x) = x -
  (2-\eps)\log(2+x)\]
  is convex, so that
  for each $a = (a_1, \dotsc, a _r) \in \{0,1\}^r$ we have
  \begin{equation}\label{eq:19}
    \frac{
      \beta(q_1^{a_1} \dotsb q _r ^{a _r })
    }{
      \beta(q_1 \dotsb q _r)
    }
    \leq
    \frac{
      \beta(p_1^{a_1} q_2^{a_2} \dotsb q _r ^{a _r })
    }{
      \beta(p_1 q_2 \dotsb q _r)
    }
    \leq
    \frac{
      \beta(p_1^{a_1} p_2^{a_2} q_3^{a_3} \dotsb q _r ^{a _r })
    }{
      \beta(p_1 p_2 q_3 \dotsb q _r)
    }
    \leq \dotsb \leq
    \frac{
      \beta(p_1^{a_1}\dotsb p _r ^{a _r })
    }{
      \beta(p_1 \dotsb p _r)
    }.    
  \end{equation} 
  The prime number theorem implies
  that $\log(p_1 \dotsb p_r) = r \log(r)(1+o(1))$,
  where the notation $o(1)$ refers to the limit
  as $r \rightarrow \infty$;
  we may and shall assume that $r$ is
  sufficiently large (and at least $100$) because the assertion
  of the lemma holds trivially when $q$ has a bounded number of
  prime factors.
  Set $r_0 = \lfloor r / 10 \rfloor$.
  Observe that
  \begin{eqnarray}\label{eq:23}
    p_{r-r_0+1} \dotsb p_r
    &=&
    \exp \bigl(r \log(r) - (r-r_0)
    \log(r-r_0)
    + o(r \log(r))
    \bigr)
    \\ \nonumber
    &=&
    \exp \left(r_0 \log(r) 
      +(r-r_0)
      \log \left(\frac{r}{r - r_0}\right)
      + o(r \log(r))
    \right)
    \\ \nonumber
    &=&
    \exp \left(r_0 \log(r)(1+o(1))
    \right)
    \\ \nonumber
    &\ll& (p_1 \dotsb p_r)^{1/9},
  \end{eqnarray}
  and
  \begin{equation}\label{eq:22}
    \log(p_1 \dotsb p_{r_0})
    = r_0 \log(r_0)(1+ o(1))
    \asymp r \log(r)(1+ o(1))
    = \log(p_1 \dotsb p_r).
  \end{equation}
  Let $\Omega_0$ denote the set of all
  $a \in \{0, 1 \}^r$
  for which $a_1 + \dotsb + a_r \leq  r_0$
  and 
  $\Omega_1$ the set of all
  $a \in \{0, 1 \}^r$
  for which $a_1 + \dotsb + a_r > r_0$,
  so that $\{0, 1\}^r = \Omega_0 \sqcup \Omega_1$.
  Then by \eqref{eq:23} we have
  \begin{equation}\label{eq:20}
    \sum_{a \in \Omega_0}
    \frac{\beta(p_1 ^{a _1 } \dotsb p _r ^{a _r })}{ \beta (p _1 \dotsb p
      _r )}
    \leq 2^r \frac{\beta(p_{r-r_0+1} \dotsb p_r)}{\beta(p_1 \dotsb p_r)}
    \ll 2^r (p_1 \dotsb p_r)^{-7/8}
    \leq \sqrt[8]{2}.
  \end{equation}
  If $a \in \Omega_1$, then
  \eqref{eq:22} implies
  $\beta(p_1 ^{a _1 } \dotsb p _r ^{a _r }) /  \beta ( p _1 \dotsb p _r )
  \asymp p_1 ^{a_1 - 1} \dotsb p _r ^{a _r -1 }$, so that
  \begin{equation}\label{eq:21}
    \sum _{a \in \Omega _1 }  \frac{\beta (p _1  ^{a _1 }
      \dotsb p _r ^{a _r })}{ \beta (p _1 \dotsb p _r )}
    \ll  \sum _{d | p _1 \dotsb p _r }
    \frac{1}{d}
    \leq (1 + o (1) )
    e ^{\gamma } \log \log (p _1 \dotsb p _r )
    \ll \log \log(e^e q).
  \end{equation}
  Since $\beta(x) \asymp x/\log(e x)^{2 - \eps}$
  for $x \in \mathbb{R}_{\geq 1}$,
  it follows from \eqref{eq:19}, \eqref{eq:20}, and \eqref{eq:21}
  that
  \[
  \frac{
    \displaystyle
    \sum_{d|q} \frac{d}{\log(d k)^{2-\eps}}
  }
  {
    \displaystyle
    \frac{q}{\log(q k)^{2-\eps}}
  }
  \asymp
  \sum_{d|q}
  \frac{\beta(d)}{\beta(q)}
  = \sum_{a \in \{0,1\}^r}
  \frac{\beta(q_1^{a_1} \dotsb q_r^{a_r})}{\beta(q_1 \dotsb q_r)}
  \ll \log \log(e^e q),
  \]
  which establishes the lemma.
\end{proof}

\begin{corollary}\label{prop:bound-sums-of-sums}
  Let $Y \geq 1$ with $Y \leq c_1 \log(q k)^{c_2}$
  for some $c_1, c_2 \geq 1$.  Then
  our sum of shifted sums satisfies
  the estimate
  \[
  \sum_{d | q} S_s(d l, d Y)
  \ll_{\eps,c_1,c_2}
  \frac{\Gamma(k-1)}{(4 \pi)^{k-1}}
  \frac{
    q k Y
  }{ \log(q k)^{2-\eps} }
  \prod_{p
    \leq q k}
  \left( 1+ \frac{2 |\lambda_f(p)|}{p} \right),
  \]
  uniformly for $s \in i \mathbb{R} \cup
  (-1/2,1/2)$ and $x \geq 1$.
\end{corollary}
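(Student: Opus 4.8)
The plan is to bound each summand $S_s(dl,dY)$ directly via Corollary \ref{cor:bound-sums} and then sum over the divisors $d$ of $q$ by means of Lemma \ref{lem:stupid-sums}, taking a little care with the auxiliary values of $\eps$. Concretely, I would first apply Corollary \ref{cor:bound-sums} with ``$x$'' $= dY$, ``$l$'' $= dl$, and its internal parameter $\eps$ replaced by $\eps/2$, obtaining
\[
S_s(dl,dY) \ll_{\eps} \frac{\Gamma(k-1)}{(4\pi)^{k-1}} \frac{dYk}{\log(dYk)^{2-\eps/2}} \prod_{p \le dYk}\left(1 + \frac{2|\lambda_f(p)|}{p}\right),
\]
uniformly for $s \in i\mathbb{R} \cup (-1/2,1/2)$; note this already supplies the required uniformity in $s$, since Corollary \ref{cor:bound-sums} is itself uniform in $s$.

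Next I would clean up the right-hand side so that it depends only on $q$ (not on $d$ and $Y$ separately). Since $Y \ge 1$ and $d \le q$ we have $\log(dYk) \ge \log(dk)$, so the denominator may be replaced by $\log(dk)^{2-\eps/2}$. For the Euler product, $dYk \le qYk$, whence
\[
\prod_{p \le dYk}\left(1 + \frac{2|\lambda_f(p)|}{p}\right) \le \prod_{p \le qk}\left(1 + \frac{2|\lambda_f(p)|}{p}\right) \cdot \prod_{qk < p \le qYk}\left(1 + \frac{2|\lambda_f(p)|}{p}\right);
\]
by the Deligne bound $|\lambda_f(p)| \le 2$ and Mertens' theorem the last product is $\ll (\log(qYk)/\log(qk))^{4}$, and the hypothesis $Y \le c_1\log(qk)^{c_2}$ gives $\log(qYk) = \log(qk) + O_{c_1,c_2}(\log\log(qk))$, so this factor is $O_{c_1,c_2}(1)$. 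Combining,
\[
S_s(dl,dY) \ll_{\eps,c_1,c_2} \frac{\Gamma(k-1)}{(4\pi)^{k-1}} \frac{dYk}{\log(dk)^{2-\eps/2}} \prod_{p \le qk}\left(1 + \frac{2|\lambda_f(p)|}{p}\right).
\]

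Finally I would sum over $d \mid q$, pulling the factors independent of $d$ out of the sum,
\[
\sum_{d|q} S_s(dl,dY) \ll_{\eps,c_1,c_2} \frac{\Gamma(k-1)}{(4\pi)^{k-1}}\, Yk \prod_{p \le qk}\left(1 + \frac{2|\lambda_f(p)|}{p}\right) \sum_{d | q} \frac{d}{\log(dk)^{2-\eps/2}},
\]
and then invoke Lemma \ref{lem:stupid-sums} (with its parameter taken to be $\eps/2$) to bound the remaining divisor sum by $\ll_\eps q/\log(qk)^{2-\eps}$; collecting terms yields exactly the claimed estimate. The only genuinely delicate point is the passage from $\prod_{p \le dYk}$ to $\prod_{p \le qk}$, which is precisely where the hypothesis $Y \le c_1\log(qk)^{c_2}$ is used and is essential, since otherwise the primes in the range $(qk, qYk]$ could contribute an unbounded factor; everything else is routine bookkeeping, in particular arranging the auxiliary $\eps$'s so that the $\eps \mapsto 2\eps$ loss incurred in Lemma \ref{lem:stupid-sums} is absorbed.
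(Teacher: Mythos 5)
Your proposal is correct and follows essentially the same route as the paper: apply Corollary \ref{cor:bound-sums} termwise with $x = dY$, extend the Euler product to $p \leq qk$ and control the tail over $qk < p \leq qkY$ via the Deligne bound and Mertens (this is exactly where the paper also uses $Y \leq c_1\log(qk)^{c_2}$), and finish with Lemma \ref{lem:stupid-sums}. Your bookkeeping with the auxiliary $\eps$'s and the replacement of $\log(dYk)$ by $\log(dk)$ is in fact slightly more explicit than the paper's one-line display.
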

\begin{proof}
  By Corollary \ref{cor:bound-sums},
  we have
  \begin{equation}\label{eq:18}
    \sum_{d | q} S_s(d l, d Y)
    \ll_{\eps}
    \frac{\Gamma(k-1)}{(4 \pi)^{k-1}}
  Y
    \left( 
      \prod_{p
        \leq q k Y}
      \left( 1+ 2 \frac{|\lambda_f(p)|}{p} \right)
    \right)
    \sum_{d | q} \frac{ d k}{\log(d k)^{2 - \eps}}.
  \end{equation}
  By the Deligne bound $|\lambda_f(p)| \leq 2$, the part of the
  product in \eqref{eq:18} taken over $q k < p \leq q k Y$ is $\ll
  \log(e Y)^4 \ll_{c_1, c_2} \log \log(e^e q k)^4$.  The claim now follows
  from Lemma \ref{lem:stupid-sums}.
\end{proof}

\begin{lemma}\label{lem:cspsrs}
  Let $\eps > 0$, $Y \geq 1$.
  If $\phi$ is a normalized Maass eigencuspform, then
  \[
  \sum _{
    0 < |l| < Y^{1+\eps}
  }
  \frac{  \left\lvert \lambda_\phi(l) 
    \right\rvert
  }
  {
    \sqrt{|l|}
  }
  \ll_{\phi,\eps} Y^{1/2+2 \eps},
  \]
  where (as indicated) the implied constant may depend upon $\phi$.
  On the other hand, if $t \in \mathbb{R}$, then
  \[
  \sum _{
    0 < |l| < Y^{1+\eps}
  }
  \frac{  \left\lvert \lambda_{i t}(l) 
    \right\rvert
  }
  {
    \sqrt{|l|}
  }
  \ll_{\eps} Y^{1/2+2\eps},
  \]
  where the implied constant does not depend upon $t$.
\end{lemma}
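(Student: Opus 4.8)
The plan is to reduce both estimates to completely elementary divisor sums. Write $X = Y^{1+\eps}$. Since $|\lambda_\phi(-l)| = |\lambda_\phi(l)|$ (resp$.$ $|\lambda_{it}(-l)| = |\lambda_{it}(l)|$), it suffices to bound $\sum_{1 \le l \le X} |\lambda_\phi(l)|/\sqrt{l}$ and the corresponding sum with $\lambda_\phi$ replaced by $\lambda_{it}$; the passage from the symmetric range $|l| < Y^{1+\eps}$ to $1 \le l \le Y^{1+\eps}$ costs only a factor of $2$.

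For the cuspidal bound I would apply the Cauchy--Schwarz inequality in the form
\[
\sum_{1 \le l \le X} \frac{|\lambda_\phi(l)|}{\sqrt{l}}
\le \Bigl( \sum_{1 \le l \le X} |\lambda_\phi(l)|^2 \Bigr)^{1/2}
\Bigl( \sum_{1 \le l \le X} \frac{1}{l} \Bigr)^{1/2},
\]
and then invoke the Rankin--Selberg bound \eqref{eq:rs-bound}, i.e$.$ $\sum_{l \le X}|\lambda_\phi(l)|^2 \ll_\phi X$, together with the trivial estimate $\sum_{l \le X} 1/l \ll \log(eX)$. This yields a bound $\ll_\phi X^{1/2} \log(eX)^{1/2}$, which with $X = Y^{1+\eps}$ becomes $\ll_{\phi,\eps} Y^{1/2+2\eps}$, the logarithmic factor and the constant $(1+\eps)^{1/2}$ being absorbed into the slack in the exponent (and the estimate being trivial when $Y$ lies in any bounded range).

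For the Eisenstein bound the key point is the \emph{uniform} pointwise estimate $|\lambda_{it}(l)| = |\sum_{ab = l}(a/b)^{it}| \le \sum_{ab = l} 1 = \tau(l)$, valid for every real $t$ because each summand has modulus $1$. Thus the sum in question is bounded by $\sum_{1 \le l \le X}\tau(l)/\sqrt{l}$, and the classical Dirichlet estimate $\sum_{l \le X}\tau(l) \ll X \log(eX)$ together with partial summation (or, alternatively, a second Cauchy--Schwarz step using $\sum_{l \le X}\tau(l)^2 \ll X\log(eX)^3$) gives $\sum_{1 \le l \le X}\tau(l)/\sqrt{l} \ll X^{1/2}\log(eX) \ll_\eps Y^{1/2+2\eps}$; crucially, no step introduces any dependence on $t$.

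I do not expect any genuine obstacle: both parts are routine. The only point needing care is ensuring that the second estimate is uniform in $t$, which is automatic here since the bound $|\lambda_{it}(l)| \le \tau(l)$ holds for all real $t$ with no loss, and since the remaining arguments are $t$-free. A minor piece of bookkeeping is to check, as noted above, that restricting to positive $l \le Y^{1+\eps}$ affects only implied constants.
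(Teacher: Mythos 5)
Your argument is correct and coincides with the paper's own (very terse) proof, which likewise cites Cauchy--Schwarz, partial summation, the Rankin--Selberg bound \eqref{eq:rs-bound}, and the uniform bound $|\lambda_{it}(l)| \leq \tau(l)$. The bookkeeping with the symmetric range and the absorption of logarithms into $Y^{\eps}$ is handled correctly, so there is nothing to add.
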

\begin{proof}
  Follows from the
  Cauchy-Schwarz inequality, partial summation,
  the Rankin-Selberg bound
  (\ref{eq:rs-bound}) for $\lambda_\phi$
  and the uniform
  bound $|\lambda_{i t}(l)| \leq \tau(l)$ for $\lambda_{it}$.
\end{proof}

\begin{proof}[Proof of Theorem \ref{thm:holow}]
  Suppose that $\phi$ is a normalized Maass eigencuspform
  of eigenvalue $\frac{1}{4} + r^2$.
  By Proposition \ref{prop:reduce-to-sums}, we have
  \begin{equation}\label{eq:25}
    \frac{\mu_f(\phi)}{\mu_f(1)}
    = \frac{1}{Y \mu_f(1)}
    \sum_{
      0 < |l| < Y^{1+\eps}
    }
    \frac{\lambda_\phi(l)}{\sqrt{|l|}}
    \sum_{d|q} S_{i r}(d l, d Y)
    + O_{\phi,\eps}(Y^{-1/2}).
  \end{equation}
  Recall from (\ref{eq:2})
  that
  \begin{equation*}
    \mu_f(1) \asymp q \frac{\Gamma(k-1)}{(4 \pi)^{k-1}}  L(\ad f, 1)
  \end{equation*}
  and recall the definition \eqref{eq:12}
  of $M_f(q k)$.
  We shall ultimately choose $Y \ll \log(q k)^{O(1)}$,
  so
  Corollary \ref{prop:bound-sums-of-sums}
  gives the bound
  \begin{equation}\label{eq:28}
    \frac{1}{Y \mu_f(1)}
    \sum_{d | q} S_{i r}(d l, d Y)
    \ll_{\eps}
    \log(q k)^{\eps}
    M_f(q k).
  \end{equation}
  By \eqref{eq:28} and Lemma \ref{lem:cspsrs}
  applied to \eqref{eq:25},
  we find that
  \begin{eqnarray*}
    \frac{\mu_f(\phi)}{\mu_f(1)}
    &\ll_{\phi,\eps}&
    \log(q k)^{\eps}
    M_f(q k)
    \sum_{0 <
      |l| < Y^{1+\eps}
    }
    \frac{\left\lvert \lambda_\phi(l)
      \right\rvert}{\sqrt{|l|}}
    + Y^{-1/2} \\
    &\ll_{\phi,\eps}&
    Y^{1/2+2\eps}
    \log(q k)^{\eps}
    M_f(q k)
    + Y^{-1/2}.
  \end{eqnarray*}
  Choosing $Y = \max(1,M_f(q k)^{-1}) \ll \log(q k)^{O(1)}$ gives the cuspidal
  case of the theorem.

  Suppose now that $\phi = E(\Psi,\cdot)$ is an incomplete
  Eisenstein series.  Proposition \ref{prop:reduce-to-sums},
  Corollary \ref{prop:bound-sums-of-sums} and Lemma
  \ref{lem:cspsrs} show, as in the cuspidal case, that
  \begin{eqnarray*}
    \frac{\mu_f(\phi)}{\mu_f(1)}
    - \frac{\mu(\phi)}{\mu(1)}
    &\ll_{\phi,\eps }&
    Y^{1/2+2 \eps} \log(q k)^\eps M_f(q k)
    \int _{\mathbb{R}}
    \left\lvert \frac{\Psi ^\wedge (\tfrac{1}{2} + i
        t)}{ \xi (1 + 2 i t)}  \right\rvert
    \, d t
    + \frac{1 + R_f(q k)}{Y^{1/2}} \\
    &\ll_{\phi}& Y^{1/2+2\eps}
    \log(q k)^\eps M_f(q k) 
    + \frac{1 + R_f(q k)}{Y^{1/2}}.
  \end{eqnarray*}
  The same choice of $Y$ as above completes the proof.
\end{proof}

\section{An extension of Watson's formula}\label{sec:an-extension-watsons}
Watson \cite{watson-2008}, building on earlier work of Garrett
\cite{MR881269}, Piatetski-Shapiro and Rallis \cite{MR911357},
Harris and Kudla \cite{harris-kudla-1991}, and Gross and Kudla
\cite{MR1145805}, proved a beautiful formula relating the
integral of the product of three modular forms to the central
value of their triple product $L$-function.  Unfortunately, Watson's
formula applies only to triples of newforms having the
\emph{same} squarefree level.  In \S
\ref{sec:proof-theor-refthm:1} we shall refer only to the
statement of the following extension of Watson's formula to the
case of interest, not the details of its proof.
\begin{theorem}\label{thm:watson-ext}
  Let $\phi$ be a Maass eigencuspform of level $1$
  and $f$ a holomorphic newform of squarefree level $q$,
  as in \S \ref{sec:modular-forms-their}.
  Then
  \[
  \frac{
    \left\lvert \int_{\Gamma_0(q) \backslash \mathbb{H}}
      \phi(z) |f|^2(z) y^k \, \frac{d x \, d y}{y^2}
    \right\rvert^2
  }{
    \int_{\Gamma \backslash \mathbb{H}}
    |\phi|^2(z) y^k \, \frac{d x \, d y}{y^2}
    \left( \int_{\Gamma_0(q) \backslash \mathbb{H}}
      |f|^2(z) y^k \, \frac{d x \, d y}{y^2}
    \right)^2
  }
  = \frac{1}{8 q}
  \frac{\Lambda(\phi \times f \times f,\tfrac{1}{2})}{
    \Lambda(\ad \phi,1) \Lambda(\ad f,1)^2}.
  \]
  The $L$-functions $L(\dotsb) = \prod_p L_p (\dotsb)$ and their completions
  $\Lambda(\dotsb) = L_\infty(\dotsb) L(\dotsb) = \prod_v L_v(\dotsb)$
  are as in \cite[\S 3]{watson-2008}.
\end{theorem}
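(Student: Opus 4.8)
The plan is to deduce Theorem \ref{thm:watson-ext} from the general trilinear period formula of Ichino \cite{MR2449948}. First I would adelize: let $\pi = \pi_\phi$ and $\Pi = \pi_f$ be the cuspidal automorphic representations of $\PGL_2(\mathbb{A}_\mathbb{Q})$ generated by $\phi$ and $f$, both with trivial central character since $\phi$ and $f$ have trivial nebentypus, and consider the trilinear form on $\pi \otimes \Pi \otimes \Pi$. Ichino's formula expresses the square of the global period of a pure tensor, divided by the product of the Petersson norms of its components, as an explicit elementary constant times $\dfrac{\Lambda(\tfrac12,\pi\times\Pi\times\Pi)}{\Lambda(1,\ad\pi)\Lambda(1,\ad\Pi)^2}$ times a product $\prod_v I_v$ of normalized local integrals of matrix coefficients, with $I_v = 1$ at almost every place. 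Taking all local vectors to be the normalized new vectors, the global period becomes a nonzero scalar multiple of the classical period $\int_{\Gamma_0(q)\backslash\mathbb{H}}\phi|f|^2 y^k\,d\mu$, and it remains to (i) record the archimedean and unramified local integrals, (ii) evaluate the ramified local integrals at primes $p \mid q$, and (iii) pass between adelic and classical normalizations.

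For (i): at the archimedean place $\pi_\infty$ is the spherical principal series attached to the spectral parameter of $\phi$ and $\Pi_\infty$ is the weight-$k$ holomorphic discrete series, exactly as in the level-one case treated by Watson, so the value of $I_\infty$ for the standard vectors is already recorded in \cite[\S 3]{watson-2008} and I would simply cite it. At primes $p \nmid q$ all three local representations are unramified and the chosen vectors are spherical, so $I_p = 1$ by the standard unramified computation that underlies Ichino's normalization. At primes $p \mid q$, since $q$ is squarefree the conductor exponent of $\Pi_p$ is $1$, so $\Pi_p$ is the Steinberg representation $\mathrm{St}_p$ up to an unramified quadratic twist $\chi$; that twist enters the product of the two $\langle g\cdot f_p,f_p\rangle$ factors only through $\chi(\det g)^2 = 1$, hence is harmless and we may take $\Pi_p = \mathrm{St}_p$. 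Meanwhile $\pi_p$ is the spherical representation attached to $\phi$ at $p$ and $\phi_p$ its spherical vector, so the local integral is precisely \eqref{eq:34}, which I would evaluate in Lemma \ref{lem:p-adic-integral}.

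For (iii): using strong approximation for $\PGL_2$ one rewrites each adelic Petersson inner product as a classical integral over $\Gamma\backslash\mathbb{H}$ (for $\pi$) or $\Gamma_0(q)\backslash\mathbb{H}$ (for $\Pi$), the change of normalization being by explicit rational constants involving the index $[\Gamma:\Gamma_0(q)] = \prod_{p\mid q}(p+1)$ together with harmless archimedean factors. Combining these with the ramified local integral values from (ii), the difference between the partial and completed $L$-factors at $p\mid q$ (Steinberg versus spherical), and the archimedean value from (i), everything collapses to the single factor $\tfrac{1}{8q} = \tfrac18\prod_{p\mid q}p^{-1}$ in the statement; the bookkeeping here is routine once the local integrals are known.

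The genuinely new and most delicate step is the evaluation of the mixed spherical/special integral \eqref{eq:34} for $p\mid q$. The plan there is to work with explicit matrix coefficients: Macdonald's formula for the spherical function $g\mapsto\langle g\cdot\phi_p,\phi_p\rangle/\langle\phi_p,\phi_p\rangle$, and the known matrix coefficient of the Iwahori-fixed new vector of $\mathrm{St}_p$, which decays like $p^{-n}$ (up to a linear factor in $n$) along $\mathrm{diag}(p^n,1)$. Using the Cartan decomposition $\PGL_2(\mathbb{Q}_p)=KA^+K$ and the standard volumes of the double cosets $K\,\mathrm{diag}(p^n,1)\,K$, the integral reduces to a one-parameter sum over $n\ge 0$; the Ramanujan bound $|\lambda_\phi(p)|\le 2$ guarantees rapid convergence, and the resulting series can be summed in closed form. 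The answer is an elementary rational function of $p$ and of the Satake parameters of $\phi$ at $p$, the latter recombining into the local $L$-factor ratios already present in Ichino's formula and leaving behind exactly the factor $p^{-1}$ needed above. I expect this $p$-adic computation to be the main obstacle; the rest of the argument is assembly.
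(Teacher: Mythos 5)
Your overall architecture is exactly the paper's: invoke Ichino's trilinear formula, cite the known values $\tilde{I}_\infty = 1$ and $\tilde{I}_p = 1$ for $p \nmid q$, observe that the unramified quadratic twist of Steinberg is harmless because it enters only through its square, and reduce everything to the single new local integral \eqref{eq:34} at $p \mid q$. The normalization bookkeeping in your step (iii) is also handled the same way (via the measure comparisons absorbed into Theorem \ref{thm:ichino}), and the product $\prod_{p\mid q} p^{-1} = q^{-1}$ is indeed where the factor $\tfrac{1}{8q}$ comes from.

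However, there is a genuine gap in your plan for the key local computation. You propose to use the Cartan decomposition $G_p = K A^+ K$ and the volumes of the double cosets $K\,a(p^n)\,K$ to reduce \eqref{eq:34} to a one-parameter sum over $n \geq 0$. That reduction is only valid for a bi-$K_p$-invariant integrand, and the integrand here is not: the new vector $f_p$ in the Steinberg representation is fixed only by the Iwahori subgroup $I_p$, so $\Phi_{f,p}^2$ is bi-$I_p$-invariant but \emph{not} constant on $K_p a(p^n) K_p$. Concretely, $K_p a(p^2) K_p$ splits into four Iwahori double cosets $I_p w I_p$ on which $\Phi_{f,p}^2 = p^{-2\lambda(w)}$ takes the distinct values $p^{-2}, p^{-4}, p^{-4}, p^{-6}$, so the formula $\int_G F = \sum_n \vol(K a(p^n) K)\, F(a(p^n))$ simply does not apply. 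The paper's Lemma \ref{lem:p-adic-integral} works instead with the Iwahori--Bruhat decomposition $G_p = \bigsqcup_{w \in \tilde{W}} I_p w I_p$, on whose cells $\Phi_{f,p}^2$ \emph{is} constant, and then must track simultaneously the word length $\lambda(w)$ (which governs $\vol(I_p w I_p)$ and the Steinberg coefficient) and the Cartan invariant $\mu(w)$ defined by $K_p w K_p = K_p a(p^{\mu(w)}) K_p$ (which governs the spherical coefficient via Macdonald's formula); the essential new combinatorial input is the two-variable generating function $\sum_{w \in \tilde{W}} x^{\mu(w)} t^{\lambda(w)} = (1+x)(1+t)/(1-xt)$, refining the one-variable Poincar\'{e} series used by Ichino--Ikeda. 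Your approach can be salvaged by first replacing $\Phi_{f,p}^2$ with its average over $K_p \times K_p$, which restores bi-$K_p$-invariance, but computing that average on each $K_p a(p^n) K_p$ is precisely the Iwahori-coset bookkeeping above, so this is not a detail one can wave away as ``standard volumes''; it is the heart of the lemma.
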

\begin{remark}\label{rmk:an-extension-watsons-1}
  For simplicity, we have stated Theorem \ref{thm:watson-ext}
  only in the special case that we need it, but our calculations
  (Lemma \ref{lem:p-adic-integral}) lead to a more general
  formula.  Let $\psi_j$ ($j=1,2,3)$ be newforms of weight $k_j$
  and level $q_j$.  We allow the possibility $k_j = 0$, in which
  case we require that $\psi_j$ be an even or odd Maass
  eigencuspform.  If $k_1 + k_2 + k_3 \neq 0$ or some prime $p$
  divides exactly one of the $q_j$, then
  it is straightforward to see that $\int \psi_1 \psi_2
  \psi_3 = 0$.  Otherwise $k_1+k_2+k_3=0$ and each prime divides
  the $q_j$ either $0,2$ or $3$ times, so one can read off from
  Watson \cite[Theorem 3]{watson-2008}, Ichino \cite{MR2449948} and Lemma
  \ref{lem:p-adic-integral} the identity
  \begin{equation}
    \frac{
      \left\lvert  \int_X \psi_1 \psi_2 \psi_3
      \right\rvert^2
    }
    {
      \prod \int_X |\psi_j|^2
    }
    = \frac{1}{8}
    \frac{\Lambda(\tfrac{1}{2},\psi_1 \times \psi_2 \times \psi_3) }{ \prod \Lambda (1, \ad
      \psi_j)}
    \prod_v c_v 
  \end{equation}
  where $X = \varprojlim \Gamma_0(q) \backslash \mathbb{H}$ with
  $\vol(X) := \vol(\Gamma_0(1) \backslash \mathbb{H}) = \pi /
  3$, $c_\infty$ is $Q_\infty \in \{0,1,2\}$ from \cite[Theorem
  3]{watson-2008}, $c_p = 1$ if $p$ divides none of the $q_j$,
  $c_p = p^{-1}$ if $p$ divides exactly two of the $q_j$, and
  $c_p = p^{-1}(1+p^{-1})(1+\eps_p)$ if $p$ divides all of the
  $q_j$ with $-\eps_p$ the product of the Atkin-Lehner
  eigenvalues for the $\psi_j$ at $p$ as in \cite[Theorem 3]{watson-2008}.
\end{remark}

Watson proved his formula only for three forms of the same
squarefree level because Gross and Kudla \cite{MR1145805}
evaluated the $p$-adic zeta integrals of Harris and Kudla
\cite{harris-kudla-1991} only when (the factorizable automorphic
representations generated by) the three forms are special at
$p$; Harris and Kudla had already considered the case that all
three forms are spherical at $p$.  Ichino \cite{MR2449948}
showed that the local zeta integrals of Harris and Kudla
are equal to simpler integrals over the group
$\PGL(2,\mathbb{Q}_p)$.  Ichino and Ikeda \cite[\S 7, \S
12]{MR2585578} computed these simpler integrals when all three
forms are special at $p$.  Since we are interested in the
integral of $\phi |f|^2$ when $\phi$ has level $1$ and $f$ has
squarefree level $q$, we must consider the case that two
representations are special and one is spherical.  We remark in
passing that B\"{o}cherer and Schulze-Pillot \cite{BoSP96}
considered similar problems for modular forms on definite
rational quaternion algebras in the classical language, but
their results are not directly applicable here.

To state (a special case of) Ichino's result, we introduce
some notation.
In what follows,
$v$ denotes a place of $\mathbb{Q}$
and
$p$ a prime number.
Let $G = \PGL(2)/\mathbb{Q}$,
$G_v = G(\mathbb{Q}_v)$,
$K_\infty = \SO(2) / \{\pm 1\}$,
$K_p = G(\mathbb{Z}_p)$,
and $G_\mathbb{A} = G(\mathbb{A}) = \prod_v' G_v$,
where $\mathbb{A} = \prod_v ' \mathbb{Q}_v$ is the adele ring of $\mathbb{Q}$.
Regard $\phi$ and $f$ as pure tensors
$\phi = \bigotimes \phi_v$ and $f = \bigotimes f_v$
in (factorizable) cuspidal automorphic representations
$\pi_\phi = \bigotimes \pi_{\phi,v}$ and $\pi_f = \bigotimes
\pi_{f,v}$
of $G_\mathbb{A} = \prod ' G_v$.
Set
$\bar{f}_v =
\left(\begin{smallmatrix}
    - 1&\\
    &1
  \end{smallmatrix}
\right) \cdot f_v$
and 
$\bar{f} = \bigotimes \bar{f}_v$.
Then $f_p = \bar{f}_p$ for all (finite) primes $p$.
Although the vectors $\phi_v$ and $f_v$ are defined only up to a
nonzero scalar multiple,
the matrix coefficients
\[
\Phi_{\phi,v}(g_v)
= \frac{\langle g_v \cdot \phi_v, \phi_v \rangle}{
  \langle \phi_v, \phi_v \rangle} ,
\quad 
\Phi_{f,v}(g_v)
= \frac{\langle g_v \cdot f_v, f_v \rangle}{
  \langle f_v, f_v \rangle} ,
\quad \Phi_{\bar{f},v}(g_v)
= \frac{\langle g_v \cdot \bar{f}_v, \bar{f}_v \rangle}{
  \langle \bar{f}_v, \bar{f}_v \rangle} 
\]
are well-defined; here $g_v$ belongs to $G_v$
and $\langle ,
\rangle_v$
denotes the (unique up to a scalar) $G_v$-invariant Hermitian pairings
on the irreducible admissible self-contragredient representations $\pi_{\phi,v}$
and $\pi_{f, v}$.
Let $d g_v$ denote the Haar measure on the group $G_v$
with respect to which $\vol(K_v) = 1$.
Define the local integrals
\[
I_v = \int_{G_v}
\Phi_{\phi,v}(g_v) \Phi_{f,v}(g_v) \Phi_{\bar{f},v}(g_v) \, d g_v
\]
and the normalized local integrals
\begin{equation}\label{eq:5}
  \tilde{I}_v =
  \left( \frac{\zeta_v(2)^3}{\zeta_v(2)}
    \frac{L_v(\tfrac{1}{2}, \phi \times f \times f)}{
      L_v(1, \ad \phi) L_v(1, \ad f)^2} \right)^{-1}
  I_v.
\end{equation}
\begin{theorem}[Ichino]\label{thm:ichino}
  We have $\tilde{I}_v = 1$ for all but finitely many places
  $v$,
  and
  \[
  \frac{
    \left\lvert 
      \int _{\Gamma _0 (q) \backslash \mathbb{H} } \phi |f|^2 y^k \,
      \frac{d x \, d y}{y^2}\right\rvert^2}{
    \int _{\Gamma \backslash \mathbb{H} } |\phi|^2 \, \frac{d x \, d
      y}{y^2}
    \left(  \int _{\Gamma _0 (q)
        \backslash \mathbb{H} } |f|^2 y^k \, \frac{d x \, d y}{y^2}
    \right)^2
  }=
  \frac{1}{8}
  \frac{\Lambda(\tfrac{1}{2}, \phi \times f \times f)}{
    \Lambda(1, \ad \phi) \Lambda(1, \ad f)^2}
  \prod_v \tilde{I}_v.
  \]
\end{theorem}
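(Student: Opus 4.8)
The statement is a special case of Ichino's trilinear-form identity \cite{MR2449948} (which builds on Garrett \cite{MR881269}, Piatetski-Shapiro--Rallis \cite{MR911357}, and Harris--Kudla \cite{harris-kudla-1991}), applied to the triple of cuspidal automorphic representations $(\pi_\phi,\pi_f,\pi_{\bar{f}})$, and the plan is to recall that identity and supply the dictionary between its adelic formulation and the classical integrals appearing above. Recall the shape of the argument behind Ichino's formula: the Garrett--Piatetski-Shapiro--Rallis construction represents the completed triple product $L$-function $\Lambda(s,\pi_1\times\pi_2\times\pi_3)$ as the pullback to $\PGL_2\times\PGL_2\times\PGL_2$ of a Siegel Eisenstein series on $\mathrm{Sp}_6$, and combining this integral representation with the Harris--Kudla see-saw manipulation of $|I(\phi_1,\phi_2,\phi_3)|^2$, where $I(\phi_1,\phi_2,\phi_3)=\int_{[\PGL_2]}\phi_1\phi_2\phi_3$, factorizes the global period as a product of local matrix-coefficient integrals $I_v$ times the ratio $\Lambda(\tfrac12,\pi_1\times\pi_2\times\pi_3)/\prod_i\Lambda(1,\ad\pi_i)$; the assertion that $\tilde I_v=1$ outside a finite set of places is the unramified local computation of Harris and Kudla.

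Granting Ichino's formula, the work is entirely in the specialization and the classical-to-adelic translation, which is where I expect all of the (bookkeeping) effort to go. Concretely I would: (i) adelize $\phi$ and $f$, taking at each place the local vector that is distinguished up to scalar --- the spherical vector at every place for $\pi_\phi$ (level one), and the spherical vector at $p\nmid q$ together with the newvector fixed by the local congruence subgroup at $p\mid q$ for $\pi_f$ --- and let $\bar{f}=\bigotimes\bar{f}_v$ be the conjugate vector as defined in the text, so that $\pi_{\bar{f}}$ is realized as $\overline{\pi_f}$, which is isomorphic to $\pi_f$ since the central character is trivial; (ii) invoke strong approximation for $\PGL_2$ to fold the adelic trilinear period of the adelized forms onto the classical integral $\int_{\Gamma_0(q)\backslash\mathbb{H}}\phi\,|f|^2\,y^k\,\tfrac{dx\,dy}{y^2}$, and similarly identify the adelic inner products $\langle\phi,\phi\rangle$ and $\langle f,f\rangle$ with $\int_{\Gamma\backslash\mathbb{H}}|\phi|^2\,\tfrac{dx\,dy}{y^2}$ and $\int_{\Gamma_0(q)\backslash\mathbb{H}}|f|^2\,y^k\,\tfrac{dx\,dy}{y^2}$; and (iii) keep careful track of the measure normalizations --- the Tamagawa measure on $[\PGL_2]$ (of total mass $2$), the local Haar measures normalized by $\vol(K_v)=1$, and the classical volume $\vol(\Gamma\backslash\mathbb{H})=\pi/3$ --- reconciling which is what produces the explicit constant $\tfrac18$ in the statement.

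The main obstacle is thus not a new idea but this bookkeeping: matching all normalizations so that the bare constant comes out exactly $1/8$, and checking that the distinguished local vectors selected in (i) are precisely those entering Ichino's formula, so that the local integrals $I_v$ (hence $\tilde I_v$) defined here coincide with his. The genuinely hard analytic content --- the integral representation and the local--global factorization --- is imported wholesale from \cite{MR2449948,MR881269,MR911357,harris-kudla-1991} and need not be reproved; the only genuinely new local computation needed in this part of the paper is the evaluation of the ramified integral $I_p$ at primes $p\mid q$, which is isolated in Lemma~\ref{lem:p-adic-integral} and used only when Theorem~\ref{thm:ichino} is applied afterward.
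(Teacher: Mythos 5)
Your proposal matches the paper's own treatment: the paper proves this theorem simply by citing \cite[Theorem 1.1, Remark 1.3]{MR2449948} and noting that one must account for the classical-to-adelic dictionary (via Gelbart) and the comparison of the Poincar\'{e} measure with the Tamagawa measure (via Vign\'{e}ras), which is exactly the specialization-plus-bookkeeping you describe. Your additional remarks on the provenance of Ichino's formula and on the isolation of the ramified computation in Lemma \ref{lem:p-adic-integral} are accurate but not part of the paper's argument for this statement.
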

\begin{proof}
  See \cite[Theorem 1.1, Remark 1.3]{MR2449948}.  We have taken
  into account the relation between classical modular
  forms
  and automorphic forms on the adele group $G_\mathbb{A}$ (see Gelbart
  \cite{MR0379375}) and the comparison (see
  for instance Vign{\'e}ras \cite[\S III.2]{MR580949}) between the Poincar\'{e} measure on the
  upper half-plane and the Tamagawa measure
  on $G_\mathbb{A}$.
\end{proof}

We know by work of Harris and Kudla
\cite{harris-kudla-1991}, Gross and Kudla \cite{MR1145805},
Watson \cite{watson-2008}, Ichino \cite{MR2585578}, and Ichino
and Ikeda \cite{MR2585578} that $\tilde{I}_\infty = 1$ and
$\tilde{I}_p = 1$ for all primes $p$ that do not divide the
level $q$.  
We contribute the following computation,
with which we deduce
Theorem \ref{thm:watson-ext}
from Theorem \ref{thm:ichino}.
\begin{lemma}\label{lem:p-adic-integral}
  Let $p$ be a prime divisor of the squarefree level $q$.  Then
  $\tilde{I}_p = 1/p$.
\end{lemma}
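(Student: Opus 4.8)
The plan is to compute the local integral $I_p$ directly at a prime $p \mid q$, where $\pi_{\phi,p}$ is an unramified principal series representation of $G_p = \PGL_2(\mathbb{Q}_p)$ (since $\phi$ has level $1$) and $\pi_{f,p}$ is the Steinberg (special) representation twisted by an unramified quadratic character (since $q$ is squarefree, so $f$ has conductor exactly $p$ at $p$). The vectors $\phi_p$ and $f_p = \bar f_p$ are the respective newvectors: $\phi_p$ is $K_p$-fixed, and $f_p$ is fixed by the Iwahori subgroup but not by $K_p$. The key point is that the matrix coefficients $\Phi_{\phi,p}$ and $\Phi_{f,p}$ are both bi-$K_p$-invariant, resp. bi-Iwahori-invariant, so by the Cartan decomposition $G_p = \bigsqcup_{n \geq 0} K_p a^n K_p$ (with $a = \mathrm{diag}(p,1)$) the integral $I_p$ reduces to a sum over $n$ of $\mathrm{vol}(K_p a^n K_p)$ times the product of the three matrix-coefficient values at $a^n$.

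First I would record the needed explicit data: the spherical function value $\Phi_{\phi,p}(a^n)$ is given by the Macdonald formula for $\PGL_2$, and $\Phi_{f,p}(a^n)$ for the (twisted) Steinberg representation decays like $p^{-n}$ up to an explicit constant and a sign coming from the Atkin–Lehner eigenvalue — here is where I would invoke \eqref{eq:atkin-lehner}, which tells us $f|_k w_\sigma = \pm f$, i.e. the local Atkin–Lehner sign at $p$. Since the integrand contains $\Phi_{f,p}$ twice, the sign squares away and does not enter the final answer, consistent with $\tilde I_p = 1/p$ being independent of it. Next I would compute $\mathrm{vol}(K_p a^n K_p) = p^n(1 + p^{-1})$ for $n \geq 1$ (and $1$ for $n=0$) in the normalization $\mathrm{vol}(K_p) = 1$, assemble the geometric-type series $\sum_{n} \mathrm{vol}(K_p a^n K_p)\, \Phi_{\phi,p}(a^n)\, \Phi_{f,p}(a^n)^2$, and evaluate it in closed form; the decay $p^{-2n}$ from the two Steinberg factors against the growth $p^n$ from the volume gives absolute convergence with no constraint on the Satake parameters of $\phi_p$.

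Finally I would divide by the normalizing Euler factor in \eqref{eq:5}. The local $L$-factors at $p$ are: $L_p(1,\ad f) = (1 - p^{-1})^{-1}$ (adjoint of Steinberg), $L_p(1/2,\phi\times f\times f)$ which, since $\pi_{f,p}$ is special, is a product of two $\GL_1$-type factors in the Satake parameters $\alpha_p^{\pm 1}$ of $\phi_p$ evaluated at the appropriate shifted point, and the relevant ratio of $\zeta_p(2)$'s; plugging in the explicit forms and simplifying should collapse everything to $1/p$. The main obstacle I expect is bookkeeping: getting every normalization exactly right — the Haar measure normalization $\mathrm{vol}(K_p)=1$, the precise newvector in the Steinberg representation and its matrix coefficient (including the Iwahori-fixed vs. $K_p$-fixed distinction and the resulting two-term rather than one-term behavior on $K_p a^n K_p$), and matching the local $L$- and $\zeta$-factor conventions of \cite{watson-2008} and \cite{MR2449948}. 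An efficient route around some of this is to cite the Ichino–Ikeda computation \cite[\S 7, \S 12]{MR2585578} for the special-special local factor and only do the genuinely new spherical-special-special case, or to use the known relation between the Steinberg matrix coefficient and the spherical one to reduce to a single Macdonald-type sum; I would present the direct computation but flag these shortcuts. Once the sum is evaluated and the normalization divided out, the identity $\tilde I_p = 1/p$ — hence, via Theorem \ref{thm:ichino} with $\prod_v \tilde I_v = \prod_{p \mid q} (1/p) = 1/q$, the factor $\tfrac{1}{8q}$ in Theorem \ref{thm:watson-ext} — follows.
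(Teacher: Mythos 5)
Your plan founders at the reduction step. You propose to write $I_p = \sum_{n \geq 0} \vol(K_p a^n K_p)\, \Phi_{\phi,p}(a^n)\, \Phi_{f,p}(a^n)^2$, but this requires the integrand to be constant on each Cartan cell, and $\Phi_{f,p}$ is only bi-Iwahori-invariant, not bi-$K_p$-invariant. This is not a bookkeeping subtlety that can be flagged and deferred: it changes the answer. Already for $n=0$ the discrepancy is fatal, since $\int_{K_p} \Phi_{f,p}^2 = \vol(I_p)\cdot 1 + \vol(I_p w_1 I_p)\cdot p^{-2} = (p+1)^{-1}(1+p^{-1}) = p^{-1}$, whereas your recipe assigns this cell the value $\vol(K_p)\,\Phi_{f,p}(1)^2 = 1$; so your sum computes $I_p + (1-p^{-1})$ rather than $I_p$. (Amusingly, the two computations happen to agree on every cell $K_p a^n K_p$ with $n \geq 1$, but that is an accident you would have to prove, and the $n=0$ cell still breaks it.) The correct procedure — and the actual content of the paper's proof — is to integrate over the Iwahori--Bruhat decomposition $G_p = \bigsqcup_{w \in \tilde W} I_p w I_p$ indexed by the extended affine Weyl group, where $\Phi_{f,p}^2 = p^{-2\lambda(w)}$ is genuinely constant, and to record for each $w$ \emph{both} the Iwahori length $\lambda(w)$ (governing $\vol(I_p w I_p)$ and $\Phi_{f,p}$) \emph{and} the Cartan datum $\mu(w)$ with $K_p w K_p = K_p a(p^{\mu(w)}) K_p$ (governing $\Phi_{\phi,p}$ via Macdonald). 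The evaluation then rests on the two-variable Poincar\'{e} series $\sum_{w \in \tilde W} x^{\mu(w)} t^{\lambda(w)} = (1+x)(1+t)/(1-xt)$, which is the genuinely new combinatorial input; your outline contains no substitute for it.

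Two smaller points. Your local factor $L_p(1,\ad f) = (1-p^{-1})^{-1}$ is wrong: for an unramified quadratic twist of Steinberg one has $L_p(s,\ad f) = \zeta_p(s+1)$, so $L_p(1,\ad f) = \zeta_p(2) = (1-p^{-2})^{-1}$, and since this factor enters squared in the normalization \eqref{eq:5} the error would corrupt the final constant. On the other hand, your observations that the Atkin--Lehner sign disappears because $\Phi_{f,p}$ appears squared, and that the spherical--special--special case is the one not covered by Ichino--Ikeda, are both correct and consistent with the paper.
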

Before embarking on the proof,
let us introduce some notation
and recall formulas for the matrix coefficients
$\Phi_{\phi,p}$ and $\Phi_{f,p}$.
Let $G_p = \PGL_2(\mathbb{Q}_p)$,
let
$K_p = \PGL_2(\mathbb{Z}_p)$,
and let $A_p$ be the subgroup of diagonal matrices in $G_p$.
Recall the Cartan decomposition $G_p = K_p A_p K_p$.
For $y \in \mathbb{Q}_p^*$ we write
$a(y) = \left(
  \begin{smallmatrix}
    y&\\
    &1
  \end{smallmatrix}
\right) \in A_p$.

The representation $\pi_{\phi,p}$ is unramified principal series
with Satake parameters $\alpha_{\phi}(p)$ and $\beta_\phi(p)$;
for clarity we write simply $\alpha = \alpha_{\phi}(p)$ and
$\beta = \beta_{\phi}(p)$.  The vector $\phi_p$
lies on the unique $K_p$-fixed line in $\pi_{\phi,p}$.
The matrix coefficient $\Phi_{f,p}$
is bi-$K_p$-invariant, so by the Cartan decomposition we need
only specify $\Phi_{\phi ,p}(a(p^m))$ for $m \geq 0$, which is
given by the Macdonald formula \cite[Theorem 4.6.6]{MR1431508}
\begin{equation}\label{eq:16}
  \Phi_{\phi ,p}(a(p^m))
  =
  \frac{1}{1 + p ^{-1} }
  p ^{- m / 2}
  \left[ \alpha^m \frac{1 - p^{-1} \frac{\beta }{\alpha }}{
      1 - \frac{\beta }{\alpha }} + \beta ^m \frac{1 - p ^{-1}
      \frac{\alpha }{\beta }}{ 1 - \frac{\alpha }{\beta }}
  \right].
\end{equation}

The representation $\pi_{f,p}$ is an unramified quadratic twist
of the Steinberg representation of $G_p$.  The vector $f_p$ lies
on the unique $I_p$-fixed line in $\pi_{f,p}$, where $I_p$ is
the Iwahori subgroup of $K_p$ consisting of matrices that are
upper-triangular mod $p$.  Thus to determine $\Phi_{f,p}$, we
need only specify the values it takes on representatives for the
double coset space $I_p \backslash G_p / I_p$, whose structure
we now recall following \cite[\S 7]{MR0342495} (see also
\cite[\S 7]{MR2585578} for a similar discussion).  Define the
elements
\[
w_1 = 
\begin{pmatrix}
  &1\\
  1&
\end{pmatrix}
,
\quad 
w_2 = 
\begin{pmatrix}
  &p ^{-1} \\
  p&
\end{pmatrix}
,
\quad 
\omega = 
\begin{pmatrix}
  &1\\
  p&
\end{pmatrix}
\]
of $G_p$.  Note that since $G_p = \PGL_2(\mathbb{Q}_p)$, we have
$w_1^2 = w_2^2 = \omega^2 = 1$.  For $w$ in the group $W_a =
\langle w_1, w_2 \rangle$ generated by $w_1$ and $w_2$, let
$\lambda(w)$ be the length of the shortest string expressing $w$
in the alphabet $\{w_1, w_2\}$, so that $\lambda(w_1) =
\lambda(w_2) = 1$.  Extend $\lambda$ to the group $\tilde{W}
= \langle w_1, w_2, \omega  \rangle$,
which is the semidirect product of $W_a$ by the group of order
$2$
generated by $\omega$,
via the formula
$\lambda(\omega^i w) = \lambda(w)$ when $w \in W_a$, so that in
particular $\lambda(\omega) = 0$.  We have a Bruhat
decomposition $G_p = \bigsqcup_{w \in \tilde{W}} I_p w I_p$;
unwinding the definitions, this reads more concretely
as
\[
G_p =
\left(
  \bigsqcup_{n \in \mathbb{Z}}
I_p \begin{pmatrix}
  p^n &  \\
   & 1
 \end{pmatrix} I_p
\right)
\bigsqcup
\left(
  \bigsqcup_{n \in \mathbb{Z}}
  I_p
  w_1
  \begin{pmatrix}
  p^n &  \\
   & 1
 \end{pmatrix} I_p
\right),
\]
but we shall not adopt this perspective.
With
our normalization of measures we have $\vol(I_p w I_p) = (p +
1)^{-1} p^{\lambda(w)}$.  Suppose temporarily that $\pi_{f,p}$
is (the \emph{trivial} twist of) the Steinberg representation.
The matrix coefficient $\Phi_{f,p}$ is bi-$I_p$-invariant and
given by
\[
\Phi_{f,p}(\omega^j w) = (-1)^j (-p^{-1})^{\lambda(w)}\] for
all $j \in \{0,1\}$ and $w \in W_a$.
In particular
\begin{equation}\label{eq:15}
  \Phi_{f,p}(\omega^j w)^2 = p^{-2 \lambda(w)}.
\end{equation}
In the general case that $\pi_{f,p}$ is a possibly nontrivial
unramified quadratic twist of Steinberg, the formula
\eqref{eq:15} for the \emph{squared} matrix coefficient
still holds.

\begin{proof}[Proof of Lemma \ref{lem:p-adic-integral}]
  Having recalled the formulas above, we see that
  \begin{eqnarray}\label{eq:17}
    I_p &=& \int_{G_p}
    \Phi_{\phi,p}(g) \Phi_{f,p}(g)^2 \, d g
    = \sum_{w \in \tilde{W}}
    \vol(I_p w I_p)
    \Phi_{\phi,p}(w)
    p^{-2 \lambda(w)} \\ \nonumber
    &=& (p+1)^{-1}
    \sum_{w \in \tilde{W}}
    \Phi_{\phi,p}(w)
    p^{-\lambda(w)},
  \end{eqnarray}
  where $\Phi_{\phi,p}$ is given by \eqref{eq:16}.
  The evaluation of the Poincar\'{e} series
  \begin{equation}\label{eq:13}
    \sum_{w \in \tilde{W}} t^{\lambda(w)}
    = 2 \frac{1+t}{1-t},
  \end{equation}
  where $t$ is an indeterminate, is asserted and used in \cite[\S
  7]{MR2585578}, but we need a finer result
  here.  For $w \in \tilde{W}$ let us write $\mu(w)$ for the
  unique nonnegative integer with the property that
  $K_p w K_p = K_p a(p^{\mu(w)}) K_p$.
  Then we claim that for indeterminates $x,t$ we have
  the relation of formal power series
  \begin{equation}\label{eq:14}
    \sum_{w \in \tilde{W}}
    x^{\mu(w)} t^{\lambda(w)}
    = \frac{(1+x)(1+t)}{1-x t}.
  \end{equation}
  Note that we recover \eqref{eq:13}
  upon taking $x = 1$.
  To prove \eqref{eq:14},
  observe that
  since $\omega w_1 = w_2 \omega$ and $\omega^2 = 1$,
  every element $w$ of $\tilde{W}$
  is of the form
  $u_{abn} = \omega^a (w_1 w_2)^n w_1^b$
  or
  $v_{abn} = \omega^a (w_2 w_1)^n w_2^b$
  for some $a \in \{0,1\}$, $b \in \{0,1\}$, and $n \in
  \mathbb{Z}_{\geq 0}$.
  Computing $u_{a b n}$ and $v_{a b n}$ explicitly to be
  \[
  u_{0 0 n}
  = \left(
    \begin{matrix}
      p^n&\\
      &p^{-n}
    \end{matrix}
  \right),
  \quad 
  u_{0 1 n}
  = \left(
    \begin{matrix}
      &p^n\\
      p^{-n}&
    \end{matrix}
  \right),
  \]
  \[
  u_{1 0 n}
  = \left(
    \begin{matrix}
      &p^{-n}\\
      p^{n+1}&
    \end{matrix}
  \right),
  \quad 
  u_{1 1 n}
  = \left(
    \begin{matrix}
      p^{-n}&\\
      &p^{n+1}
    \end{matrix}
  \right),
  \]
  \[
  v_{0 0 n}
  = \left(
    \begin{matrix}
      p^{-n}&\\
      &p^{n}
    \end{matrix}
  \right),
  \quad 
  v_{0 1 n}
  = \left(
    \begin{matrix}
      &p^{-n-1}\\
      p^{n+1}&
    \end{matrix}
  \right),
  \]
  \[
  v_{1 0 n}
  = \left(
    \begin{matrix}
      &p^{n}\\
      p^{1-n}&
    \end{matrix}
  \right),
  \quad 
  v_{1 1 n}
  = \left(
    \begin{matrix}
      p^{n+1}&\\
      &p^{-n}
    \end{matrix}
  \right),
  \]
  we see that this parametrization of $\widetilde{W}$ is unique
  except that $u_{a00} = v_{a 00}$ for each $a \in
  \{0,1\}$; furthermore, we can read off that
  $\mu(u_{a b n}) = 2 n + a $, that $\mu(v_{a
    b n}) = 2(n+b) - a$, and that $\lambda(u_{a b n}) = \lambda(v_{a
    b n}) = 2 n + b$.  Thus
  \begin{align*}
    \sum_{w \in \tilde{W}} x^{\mu(w)} t^{\lambda(w)}
    &=
    (1+x) +
    \mathop{    \sum_{b =0,1}  \sum_{n \geq 0}    }_{2n+b > 0}
    t^{2 n + b}
    \sum_{a =0,1}
    \left( x^{2 n + a} + x^{2(n+b)-a} \right) \\
    &=
    (1+x) +
    \mathop{    \sum_{b =0,1}  \sum_{n \geq 0}    }_{2n+b > 0}
    t^{2 n + b}
    x^{2 n + b -1}
    \sum_{a =0,1}
    \left( x^{1 + a-b} + x^{1+b-a} \right) \\
    &=
    (1+x) + (1+x)^2 \sum_{m > 0} t^m x^{m-1},
  \end{align*}
  from which \eqref{eq:14} follows upon summing the geometric series.
  We now combine \eqref{eq:16}, \eqref{eq:17}
  and \eqref{eq:14}, noting that the series converge
  because $|\alpha| < p^{1/2}$
  and $|\beta| < p^{1/2}$;
  the contributions to the formula \eqref{eq:17} for $I_p$
  of the two terms in the formula \eqref{eq:16}
  for $\Phi_{\phi,p}(a(p^m))$ are respectively
  \[
  (p+1)^{-1} (1+p^{-1})^{-1}
  \frac{
    1 - p ^{-1} \frac{\beta }{ \alpha }
  }
  {
    1 - \frac{\beta}{\alpha }
  }
  \frac{
    (1 + p ^{- 1/2} \alpha )(1+p^{-1})
  }
  {
    1 - p^{-3/2} \alpha
  },
  \]
  and
  \[
  (p+1)^{-1} (1+p^{-1})^{-1}
  \frac{
    1 - p ^{-1} \frac{\alpha }{ \beta }
  }
  {
    1 - \frac{\alpha}{\beta }
  }
  \frac{
    (1 + p ^{- 1/2} \beta )(1+p^{-1})
  }
  {
    1 - p^{-3/2} \beta
  }.
  \]
  Summing these fractions by cross-multiplication and then simplifying, we obtain
  \[
  I_p
  = p^{-1} ( 1-p^{-1})
  \frac{ (1 + \alpha p ^{- 1/2 }) (1 + \beta p ^{- 1 / 2})}{
    (1 - \alpha p ^{- 3 / 2}) (1 - \beta p ^{- 3 / 2})}.
  \]
  Recall the definition \eqref{eq:5} of
  $\tilde{I}_p$.
  The local $L$-factors are given by  (see \cite[\S 3.1]{watson-2008})
  \[
  L_p(1,\ad f) = \zeta_p(2),
  \quad 
  L_p(1,\ad \phi) = [(1-\alpha ^2 p^{-1})(1 - p^{-1})(1 -
  \beta^2 p^{-1})] ^{-1},
  \]
  \[
  L_p(\tfrac{1}{2},\phi \times f \times f)
  = [(1-\alpha p^{-1/2})(1-\beta p^{-1/2})
  (1-\alpha p^{-3/2})(1-\beta p^{-3/2})]^{-1},
  \]
  thus the normalized local integral $\tilde{I}_p$
  is
  \[
  \tilde{I}_p
  =
  p^{-1} ( 1- p^{-1})
  \frac{
    (1- \alpha p^{-1/2})(1 - \beta p^{-1/2}) (1 + \alpha p ^{- 1/2 }) (1 + \beta p ^{- 1 / 2})
  }{(1 - \alpha^2 p^{-1}) (1 - p^{-1}) ( 1 - \beta^2 p^{-1})
  }
  = p^{-1},
  \]
  as asserted.
\end{proof}

\section{Proof of Theorem
  \ref{thm:1}}\label{sec:proof-theor-refthm:1}
We combine Theorem \ref{thm:holow} and Theorem
\ref{thm:watson-ext} with Soundararajan's weak subconvex bounds
\cite{MR2680497} to complete the proof of Theorem
\ref{thm:1}.
Fix a positive even integer $k$.
Let $f$ be a newform of weight $k$ and squarefree level $q$.
Fix a Maass eigencuspform or incomplete Eisenstein series
$\phi$.
We will show that the ``discrepancy''
\[
D_f(\phi) := \frac{\mu_f(\phi)}{\mu_f(1)}
- \frac{\mu(\phi)}{\mu(1)}
\]
tends to $0$ as
$q k \rightarrow \infty$,
thereby fulfilling the criterion of Lemma \ref{prop:3},
by combining the complementary estimates
for $D_f(\phi)$
provided below by Proposition \ref{prop:holow-refined-level}
and Proposition \ref{prop:sound-level}.
\begin{lemma}\label{lem:boundz}
  The quantities $M_f(x)$ and $R_f(x)$ (\ref{eq:12}) appearing in
  the statement of Theorem \ref{thm:holow} satisfy
  the estimates
  \[
  M_f(q k) \ll_{\eps} \log(q k)^{1/6+\eps} L(\ad f,1)^{1/2},
  \quad 
  R_f(q k) \ll_{\eps} \frac{\log(q k)^{-1+\eps}}{L(\ad f,1)}
  \ll \log(q k)^{\eps}.
  \]
\end{lemma}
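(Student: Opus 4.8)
The plan is to handle $M_f(qk)$ and $R_f(qk)$ separately; neither uses anything from \S\S3--4.

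For $R_f(qk)$ I would feed Soundararajan's weak subconvex bound \cite{MR2680497} into the definition. The analytic conductor of $\ad f$ at $\tfrac{1}{2}+it$ is $\ll q^2k^2(1+|t|)^3$, and $\ad f$ satisfies the hypotheses of \cite{MR2680497}, so $L(\ad f,\tfrac{1}{2}+it)\ll_\eps q^{1/2}k^{1/2}(1+|t|)^{3/4}(\log qk)^{-1+\eps}$. Because the factor $(1+|t|)^{-10}$ makes the $t$-integral converge, $\int_{\mathbb R}|L(\ad f,\tfrac{1}{2}+it)(1+|t|)^{-10}|\,dt\ll_\eps q^{1/2}k^{1/2}(\log qk)^{-1+\eps}$, hence $R_f(qk)\ll_\eps(\log qk)^{-1+\eps}/L(\ad f,1)$; the bound $\ll(\log qk)^\eps$ follows at once from \eqref{eq:hlghl}.

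For $M_f(qk)$ I would argue entirely with prime sums. Writing $\prod_{p\le qk}(1+2|\lambda_f(p)|/p)\le\exp\bigl(2\sum_{p\le qk}|\lambda_f(p)|/p\bigr)$ and applying Cauchy--Schwarz and Mertens, the whole question reduces to estimating $\sum_{p\le qk}\lambda_f(p)^2/p$. Since $\lambda_f(p)^2=1+\lambda_{\ad f}(p)$ for $p\nmid q$ (the $p\mid q$ terms contributing $O(1)$), the crucial point is the uniform identity
\[
\sum_{p\le qk}\frac{\lambda_f(p)^2}{p}=\log\log(qk)+\log L(\ad f,1)+O_\eps(1).
\]
I would prove this by comparing $\sum_{p\le qk}\lambda_{\ad f}(p)/p$ with $\log L(\ad f,1)=\sum_p\lambda_{\ad f}(p)/p+O(1)$, which is legitimate because $L(\ad f,s)$ is entire and has the classical zero-free region with the exceptional zero ruled out by \eqref{eq:hlghl}, so that its Euler product converges on $\Re s=1$. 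The tail $\sum_{p>qk}\lambda_{\ad f}(p)/p$ is then split at $(qk)^{1/\eps}$: the range $qk<p\le(qk)^{1/\eps}$ contributes $O_\eps(1)$ by Mertens and $|\lambda_{\ad f}(p)|\le 3$, while the range $p>(qk)^{1/\eps}$ contributes $O_\eps(1)$ by the prime number theorem for $\ad f$, which is effective there since $(qk)^{1/\eps}$ exceeds a large fixed power of the conductor.

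The numerology then comes out from the elementary inequality
\[
2\sqrt{(u+v)u}\le\tfrac{13}{6}u+\tfrac{3}{2}v\qquad(u>0,\ v\ge-u),
\]
which one verifies by calculus (the difference is a nonnegative convex function of $v$ vanishing at $v=-\tfrac{5}{9}u$), applied with $u=\log\log(qk)+O(1)$ and $v=\log L(\ad f,1)$, the constraint $v\ge-u+O(1)$ being \eqref{eq:hlghl}. This gives $2\sum_{p\le qk}|\lambda_f(p)|/p\le(\tfrac{13}{6}+\eps)\log\log(qk)+\tfrac{3}{2}\log L(\ad f,1)+O_\eps(1)$, i.e.\ $\prod_{p\le qk}(1+2|\lambda_f(p)|/p)\ll_\eps(\log qk)^{13/6+\eps}L(\ad f,1)^{3/2}$, which is exactly the asserted bound on $M_f(qk)$ after dividing by $\log(eqk)^2L(\ad f,1)$. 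The main obstacle is the displayed identity for $\sum_{p\le qk}\lambda_f(p)^2/p$: the summation range is only a bounded power of the square root of the conductor of $\ad f$, so the prime number theorem for $\ad f$ cannot be invoked directly, and it is the split at $(qk)^{1/\eps}$ together with the Hoffstein--Lockhart lower bound \eqref{eq:hlghl} that makes the tail estimate uniform in $q$ and $k$.
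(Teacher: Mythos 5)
Your treatment of $R_f(qk)$ is correct and is exactly what the paper does: the paper's proof of this lemma is essentially a citation (to Soundararajan's Example~1 and Holowinsky--Soundararajan's Lemma~1 with ``$k$'' replaced by ``$qk$''), and feeding the weak subconvex bound for $L(\ad f,\tfrac12+it)$, with analytic conductor $\asymp q^2k^2(1+|t|)^3$, into the definition of $R_f$ is that argument. For $M_f(qk)$ your reduction and numerology are also right: your inequality $2\sqrt{(u+v)u}\le\tfrac{13}{6}u+\tfrac32 v$ is precisely the AM--GM step $2|\lambda|\le\alpha+\alpha^{-1}\lambda^2$ at $\alpha=2/3$ used in the proof of \cite[Lemma 3]{MR2680499} that the paper invokes, and your observation that the ramified primes cost only $O(1)$ plays the role of the paper's remark that $\lambda_f(p)^2\le 1+\lambda_f(p^2)$ for \emph{all} $p$.

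The gap is in your proof of the ``crucial identity,'' which is the entire content of the $M_f$ bound. You split the tail at $(qk)^{1/\eps}$ and assert that beyond that point the prime number theorem for $\ad f$ is ``effective there since $(qk)^{1/\eps}$ exceeds a large fixed power of the conductor.'' This is the step that fails: for a degree-$3$ $L$-function of analytic conductor $C\asymp(qk)^2$, the classical zero-free region of width $c/\log C$ yields $\sum_{n\le t}\Lambda_{\ad f}(n)\ll t\exp(-c\log t/\log C)\,\log^{O(1)}(tC)$, so at $t=C^{A}$ with $A$ fixed the saving over the trivial bound is only the constant $e^{-cA}$ (and is in fact swamped by the polylogarithmic factors); genuine effectivity of the PNT for $\ad f$ begins only around $t\ge\exp((\log C)^{1+\eta})$. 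Consequently neither $\sum_{p>(qk)^{1/\eps}}\lambda_{\ad f}(p)/p=O_\eps(1)$ nor, equivalently, $\log L(\ad f,1+1/\log(qk))\le\log L(\ad f,1)+O_\eps(1)$ follows from what you cite: the zero-based estimate for the latter difference is only $O(\delta\log C\cdot\log\log C)=O(\log\log(qk))$ at $\delta=1/\log(qk)$, which after exponentiating and raising to the power $3/2$ destroys the exponent $1/6$. The strategy can be repaired --- e.g.\ first extend the product trivially from $p\le qk$ to $p\le\exp((\log qk)^{1+\eta})$ at an acceptable cost $(\log qk)^{O(\eta)}$, and only then truncate the Euler product, at a height where the zero-free region for $\mathrm{sym}^2 f$ (with the Siegel zero excluded by Goldfeld--Hoffstein--Lieman) really does control the tail --- but as written your split point is at the wrong scale and the claimed $O_\eps(1)$ tail bound is unjustified. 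This difficulty is exactly what \cite[Lemma 2]{MR2680499} is designed to circumvent, which is why the paper defers to that argument rather than to a direct application of the prime number theorem for $\ad f$.
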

\begin{proof}
  The bound for $M_f(q k)$ follows from the proof of \cite[Lemma
  3]{MR2680499}
  with ``$k$'' replaced by ``$q k$,'' noting that $\lambda_f(p)^2
  \leq 1 + \lambda_f(p^2)$ for \emph{all} primes $p$.  The bound
  for $R_f(q k)$ follows from the arguments of \cite[Example
  1]{MR2680497}, \cite[Lemma
  1]{MR2680499} with ``$k$'' replaced by ``$q k$'' and the lower bound (\ref{eq:hlghl})
  for $L(\ad f, 1)$.
\end{proof}
\begin{proposition}\label{prop:holow-refined-level}
  We have $D_f(\phi) \ll_{\phi,\eps} \log(q k)^{1/12+\eps} L(\ad f, 1)^{1/4}$.
\end{proposition}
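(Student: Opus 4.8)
The plan is to derive Proposition~\ref{prop:holow-refined-level} by feeding the bounds of Lemma~\ref{lem:boundz} into Theorem~\ref{thm:holow}, handling separately the case of a Maass eigencuspform and the case of an incomplete Eisenstein series.

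First I would treat the cuspidal case. After normalizing $\phi$ so that $\lambda_\phi(1) = 1$ (which only affects the implied constant, which is already allowed to depend on $\phi$), I note that $\mu(\phi) = 0$, since the constant term of a cusp form vanishes and hence $\int_{\Gamma \backslash \mathbb{H}} \phi \, d\mu = 0$. Therefore $D_f(\phi) = \mu_f(\phi)/\mu_f(1)$, and the first assertion of Theorem~\ref{thm:holow} gives $D_f(\phi) \ll_{\phi,\eps} \log(qk)^\eps M_f(qk)^{1/2}$.

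For $\phi = E(\Psi,\cdot)$ an incomplete Eisenstein series, the second assertion of Theorem~\ref{thm:holow} bounds $D_f(\phi)$ directly by $\log(qk)^\eps M_f(qk)^{1/2}\bigl(1 + R_f(qk)\bigr)$. In either case I would then substitute the estimates of Lemma~\ref{lem:boundz}: raising $M_f(qk) \ll_\eps \log(qk)^{1/6+\eps} L(\ad f,1)^{1/2}$ to the power $1/2$ yields $M_f(qk)^{1/2} \ll_\eps \log(qk)^{1/12+\eps} L(\ad f,1)^{1/4}$, while $1 + R_f(qk) \ll_\eps \log(qk)^\eps$. Multiplying these bounds together with the leading factor $\log(qk)^\eps$ and relabelling $\eps$ as permitted by the conventions of \S\ref{sec:notation-conventions}, I obtain $D_f(\phi) \ll_{\phi,\eps} \log(qk)^{1/12+\eps} L(\ad f,1)^{1/4}$ in both cases, which is the claim.

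There is no serious obstacle here: all the analytic content has already been absorbed into Theorem~\ref{thm:holow} (via the refined shifted-sum estimates of \S\ref{sec:main-estimates}) and into the standard bounds collected in Lemma~\ref{lem:boundz}. The only points requiring a modicum of care are the vanishing $\mu(\phi) = 0$ in the cuspidal case and the observation that the factor $1 + R_f(qk)$, which a priori could be as large as $\log(qk)^\eps$, contributes only a harmless extra power of $\log(qk)^\eps$ that is swept into the final $\eps$.
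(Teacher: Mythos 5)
Your proposal is correct and is exactly the paper's argument: the paper's proof reads in its entirety ``Follows immediately from Theorem \ref{thm:holow} and Lemma \ref{lem:boundz},'' and you have simply supplied the routine substitutions (including the observation that $\mu(\phi)=0$ in the cuspidal case, so that $D_f(\phi)=\mu_f(\phi)/\mu_f(1)$ there) that the paper leaves implicit.
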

\begin{proof}
  Follows immediately from Theorem \ref{thm:holow}
  and Lemma \ref{lem:boundz}.
\end{proof}

\begin{proposition}\label{prop:sound-level}~
  We have $D_f(\phi) \ll_{\phi,\eps} \log(q k)^{-\delta +\eps} L(\ad
  f,1)^{-1}$,
  where $\delta = 1/2$ if $\phi$ is a Maass eigencuspform
  and $\delta = 1$ if $\phi$ is an incomplete Eisenstein series.
\end{proposition}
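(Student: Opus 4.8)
The plan is to treat the two cases of the proposition separately, in each reducing $D_f(\phi)$ to a central value of an $L$-function and applying Soundararajan's weak subconvexity bounds \cite{MR2680497}, together with the lower bound \eqref{eq:hlghl} for $L(\ad f,1)$.

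\emph{The cuspidal case ($\delta=1/2$).} Since $\mu(\phi)=0$ we have $D_f(\phi)=\mu_f(\phi)/\mu_f(1)$, and Theorem \ref{thm:watson-ext} gives
\[
|D_f(\phi)|^2=\Bigl(\int_{\Gamma\backslash\mathbb{H}}|\phi|^2\,d\mu\Bigr)\cdot\frac{1}{8q}\cdot\frac{\Lambda(\phi\times f\times f,\tfrac12)}{\Lambda(\ad\phi,1)\,\Lambda(\ad f,1)^2},
\]
in which the first factor is $O_\phi(1)$. I would factor each completed $L$-function as $\Lambda=L_\infty L$. The factor $\Lambda(\ad\phi,1)$ and the $\phi$-dependent part of $L_\infty(\phi\times f\times f,\tfrac12)$ are $O_\phi(1)$, while a routine application of Stirling's formula to the explicit $\Gamma$-factors (as tabulated in \cite[\S3]{watson-2008}) shows $L_\infty(\phi\times f\times f,\tfrac12)/L_\infty(\ad f,1)^2\asymp_\phi k^{-1}$. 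For the finite part I would apply Soundararajan's theorem to $L(\phi\times f\times f,s)$: it is a nice degree-$8$ $L$-function (analytic continuation and functional equation from the theory of the triple product; entire since $\phi$ and $f$ are distinct cusp forms), with analytic conductor $\asymp_\phi(qk)^4$ — the exponent of $q$ being $4$ because at each $p\mid q$ one has $\pi_{f,p}\otimes\pi_{f,p}\cong\mathrm{Sym}^2\,\mathrm{St}\oplus\mathbf{1}$ and $\pi_{\phi,p}\otimes\mathrm{Sym}^2\,\mathrm{St}$ contributes conductor exponent $2+2$ — and with Dirichlet coefficients satisfying the required weak Ramanujan hypothesis, since $|\lambda_f(p)|\le2$ by Deligne and $\phi$ is fixed (so that \eqref{eq:rs-bound} controls the coefficients of $\phi$). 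Hence $L(\phi\times f\times f,\tfrac12)\ll_{\phi,\eps}(qk)\log(qk)^{-1+\eps}$, and combining the above the powers of $q$ and $k$ cancel, leaving $|D_f(\phi)|^2\ll_{\phi,\eps}\log(qk)^{-1+\eps}L(\ad f,1)^{-2}$, which is the claim with $\delta=1/2$.

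\emph{The incomplete Eisenstein case ($\delta=1$).} Write $\phi=E(\Psi,\cdot)$. Inserting the spectral expansion \eqref{eq:1} into $\mu_f(\phi)$ and using $\mu(\phi)=\Psi^\wedge(1)$ yields the exact identity
\[
D_f(\phi)=\frac{1}{\mu_f(1)}\int_{(1/2)}\Psi^\wedge(s)\,\mu_f(E(s,\cdot))\,\frac{ds}{2\pi i},
\]
the residue at $s=1$ accounting precisely for the subtracted main term $\mu(\phi)/\mu(1)$. The Rankin--Selberg unfolding already carried out in the proof of Lemma \ref{lem:3} identifies
\[
\frac{\mu_f(E(s,\cdot))}{\mu_f(1)}=\frac{2\pi^2}{q}\Bigl(\frac{q}{4\pi}\Bigr)^{s}\frac{\Gamma(s+k-1)}{\Gamma(k)}\frac{\zeta(s)}{\zeta(2s)}\frac{L(\ad f,s)}{L(\ad f,1)}.
\]
On $\Re s=\tfrac12$ the prefactor is $\asymp q^{-1/2}$, Stirling bounds the archimedean ratio by $\ll k^{-1/2}(1+|t|)^{O(1)}$ with rapid decay in $|t|$, and $1/\zeta(1+2it)\ll\log(2+|t|)$; together with the rapid decay of $\Psi^\wedge$ this gives $|D_f(\phi)|\ll_\phi R_f(qk)$, with $R_f$ as in \eqref{eq:12}. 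Lemma \ref{lem:boundz} — which rests precisely on Soundararajan's weak subconvexity bound for $L(\ad f,s)$ on the critical line, valid because $\mathrm{Sym}^2 f$ (or, when $f$ is dihedral, each $\GL_1$-constituent of $L(\ad f,s)$) meets the hypotheses of \cite{MR2680497} — then yields $R_f(qk)\ll_\eps\log(qk)^{-1+\eps}L(\ad f,1)^{-1}$, as required.

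\emph{Main obstacle.} Given Theorem \ref{thm:watson-ext} and \cite{MR2680497}, both cases are essentially mechanical; the one point needing real care is the bookkeeping — one must verify that the powers of $q$ and $k$ coming from the analytic conductor and the archimedean $\Gamma$-factors cancel exactly, so that the full $\log^{1-\eps}$ saving of weak subconvexity survives undiminished in $|D_f(\phi)|^2$. A secondary point is that the degree-$8$ triple product $L$-function is not known to be automorphic, so one checks directly that it lies in the axiomatic class to which \cite{MR2680497} applies (analytic properties from the triple product integral, weak Ramanujan from Deligne's bound and the fixedness of $\phi$); this is routine and already implicit in \cite{MR2680497,MR2680499}.
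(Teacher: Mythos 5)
Your proposal is correct and follows essentially the same route as the paper: in the cuspidal case the paper likewise combines Theorem \ref{thm:watson-ext} with Soundararajan's weak subconvexity (citing \cite[Example 2]{MR2680497} with ``$k$'' replaced by ``$qk$'' and conductor $\asymp(qk)^4$) to get $|D_f(\phi)|^2\ll_\phi L(\phi\times f\times f,\tfrac12)/(qk\,L(\ad f,1)^2)$, and in the Eisenstein case it uses exactly the unfolding identity from Lemma \ref{lem:3} followed by the bound $R_f(qk)\ll_\eps\log(qk)^{-1+\eps}L(\ad f,1)^{-1}$ from Lemma \ref{lem:boundz}. Your write-up merely makes explicit the archimedean and conductor bookkeeping that the paper delegates to the cited arguments.
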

\begin{proof}
  If $\phi$ is a Maass eigencuspform,
  then the analytic conductor of $\phi \times f \times f$ is
  $\asymp (q k)^4$, so Theorem \ref{thm:watson-ext} and
  the arguments of Soundararajan \cite[Example
  2]{MR2680497} with ``$k$'' replaced by ``$q k$''
  show that
  \[
  \left\lvert \frac{\mu _f (\phi) }{ \mu _f (1) } \right\rvert
  ^2 
  \ll_{\phi} \frac{L(\phi \times f \times f,\tfrac{1}{2})}{ q k \cdot
    L(\ad f,1)^2}
  \ll_{\eps} \frac{1}{\log(q k)^{1-\eps} L(\ad f,1)^2}.
  \]
  If $\phi = E(\Psi,\cdot)$
  is an incomplete Eisenstein series,
  then the unfolding method
  as in Lemma \ref{lem:3} and the bound
  for $R_f(q)$ given by Lemma \ref{lem:boundz} show that
  \begin{align*}
    \frac{\mu_f(\phi)}{\mu_f(1)}
    - \frac{\mu(\phi)}{\mu(1)}
    &= \frac{2 \pi^2}{q}
    \int_{(1/2)} \Psi^\wedge(s)
    \left( \frac{q}{4 \pi } \right)^s
    \frac{\Gamma(s + k-1)}{\Gamma(k)}
    \frac{\zeta(s)}{\zeta(2 s)}
    \frac{L(\ad f,s)}{L(\ad f,1)}
    \, \frac{d s}{2 \pi i} \\
    &\ll_{\phi} R_f(q k)
    \ll_{\eps} \frac{\log(q k)^{-1+\eps}}{L(\ad f,1)}.
  \end{align*}
\end{proof}

\begin{proof}[Proof of Theorem \ref{thm:1}]
  By Propositions \ref{prop:holow-refined-level} and
  \ref{prop:sound-level}, there exists $\delta \in \{1/2,1\}$
  such that
  \[
  D_f(\phi) \ll_{\phi,\eps} \min \left( \log(q k) ^{-\delta + \eps}
    L(\ad f,1)^{-1}, \log(q k)^{1/12+\eps} L(\ad f, 1)^{1/4}
  \right);
  \]
  it follows by the argument of \cite[\S
  3]{MR2680499} with ``$k$'' replaced by
  ``$q k$'' that $D_f(\phi) \rightarrow 0$ as $q k \rightarrow
  \infty$.
\end{proof}

\bibliography{refs2}{}
\bibliographystyle{plain}
\end{document}